\documentclass[12pt]{amsart}
\usepackage{graphicx}
\usepackage{amsmath, amscd, amssymb, amsthm}
\usepackage[subnum]{cases}
\usepackage{changepage}
\usepackage{xcolor}
\usepackage{marginnote}
\usepackage{hyperref}
\usepackage{cleveref}
\usepackage[all]{xy}
\usepackage{tabularx}
\usepackage{ltablex}
\usepackage{longtable}
\usepackage[T1]{fontenc}

\newtheorem{theorem}{Theorem}[section]

\newtheorem{proposition}[theorem]{Proposition}
\newtheorem{corollary}[theorem]{Corollary}
\newtheorem{conjecture}[theorem]{Conjecture}

\theoremstyle{definition}
\newtheorem{definition}[theorem]{Definition}
\newtheorem{remark}[theorem]{Remark}
\numberwithin{equation}{section}

\newtheorem{setting}[theorem]{Setting}

\setlength\textwidth{6.5in}
\setlength\textheight{8.6in}

\addtolength{\hoffset}{-1.8cm}
\addtolength{\voffset}{-1cm}

\usepackage{fancyhdr}
\pagestyle{empty}
\lhead{}
\rhead{}
\rfoot{}
\cfoot{}

\begin{document}

\normalfont

\title{Period Rings with Big Coefficients and Applications I}
\author{Xin Tong}

\maketitle

\begin{abstract}
\rm Following ideas of Kedlaya-Liu, we are going to consider extending our previous work to the context of more general adic spaces, which will be corresponding deformation of the relative $p$-adic Hodge structure over more general adic spaces. This means that the deformation could be also realized by an adic spaces (perfectoid, preperfectoid, relatively perfectoid and so on). Parts of the whole project here actually are inspired by the corresponding Drinfeld's lemma for diamonds after Scholze, as well as the work from Carter-Kedlaya-Z\'abr\'adi which is aimed at studying the representation theory of products of \'etale fundamental groups. Moreover, we gain motivations from noncommutative analytic geometries and noncommutative Tamagawa number conjectures.  
\end{abstract}

\newpage

\tableofcontents

\newpage

\section{Introduction}

\subsection{Some Motivation of Big Coefficients}

\noindent We initiate our development on the corresponding period rings with big coefficients and application, with the corresponding motivation partially coming from our Hodge-Iwasawa theory (see \cite{T1}, \cite{T2}), although we do not want to restrict ourselves to Hodge-Iwasawa theory. In our previous work \cite{T1}, \cite{T2} and actually also \cite{T3}, we developed very carefully the corresponding deformation of various type of relative $p$-adic Hodge structures over rigid analytic spaces. It is not necessary to just consider things over rigid analytic spaces namely those analytic spaces which admit atlas made up of strictly affinoids. We first want to mention that one could have the chance to generalize this.\\

\indent One of our target here is the corresponding adic space coefficient consideration, which is natural if one looks at the corresponding context of \cite{CKZ} or \cite{PZ}, where one has some multivariate Robba rings which admit multi partial Frobenius actions. For instance when we have two variables taking quotient by one partial Frobenius will basically give us some adic space coefficients. The remaining Frobenius gives rise to some relative Frobenius Hodge structures. \\

\indent The corresponding \cite{CKZ} actually proposed that one can also even consider the product version of the corresponding context of \cite{KL1} and \cite{KL2}, which is our second goal here. We want to systematically consider deforming the Frobenius modules structures of \cite{KL1} and \cite{KL2} after our work \cite{T1} and \cite{T2}. We focus on the translating the corresponding results in \cite{T1} and \cite{T2} to the corresponding context in the current adic space consideration, again after \cite{KL1}, \cite{KL2} as well. We hope to work in full generality, namely over perfectoid uniform adic Banach coefficients and those adic space coefficients forming from the quotient of the perfectoid uniform adic Banach ones. \\

\indent In the situation where we have the corresponding Kedlaya-Liu's perfect Robba rings with general coefficients in some Banach rings, one might have to impose some of the corresponding sheafiness condition. But we would like to use the $\infty$-Huber spectrum given by \cite{BK} to tackle this from some other derived perspective where we do have the corresponding derived sheafiness. Definitely this is very complicated to manipulate, but we initiate some of the discussion though not complete at all. We hope we could come back to this systematically later at some point.\\

\indent Finally we revisit the corresponding noncommutative Hodge-Iwasawa theory we initiated in the corresponding papers \cite{T1} and \cite{T2}. We use some observation coming from Kedlaya to tackle the corresponding noncommutative descent for finite projective datum.\\

\subsection{Results}

\indent In this section we make some summary on our main results covered in the main body of our current paper:\\

\noindent 1. The first scope of the discussion is around some perfectoid or preperfectoid deformation of essentially the Frobenius modules and the corresponding quasi-coherent sheaves over the deformed version of the adic Fargues-Fontaine curves by perfectoids or preperfectoids. In \cref{theorem3.11}, we consider the comparison between the following objects (with the corresponding notations in \cref{theorem3.11}):\\

\noindent A. The pseudocoherent sheaves over the adic relative Fargues-Fontaine curve $Y_{\mathrm{FF},R,A}$ where $A$ is a perfectoid adic Banach uniform algebra over $E$, here we consider \'etale topology;\\
\noindent B. The pseudocoherent sheaves over the adic relative Fargues-Fontaine curve $Y_{\mathrm{FF},R,A}$ where $A$ is a perfectoid adic Banach uniform algebra over $E$, here we consider pro-\'etale topology;\\
\noindent C. The pseudocoherent modules over the relative $A$-coefficient Robba ring $\widetilde{\Pi}_{R,A}$, carring the Frobenius action, where $A$ is a perfectoid adic Banach uniform algebra over $E$;\\
\noindent D. The pseudocoherent modules over the relative $A$-coefficient Robba ring $\widetilde{\Pi}^\infty_{R,A}$, carring the Frobenius action, where $A$ is a perfectoid adic Banach uniform algebra over $E$;\\ 
\noindent E. The pseudocoherent bundles over the relative $A$-coefficient Robba ring $\widetilde{\Pi}_{R,A}$, carring the Frobenius action, where $A$ is a perfectoid adic Banach uniform algebra over $E$;\\
\noindent F. The pseudocoherent bundles over the relative $A$-coefficient Robba ring $\widetilde{\Pi}^{[s,r]}_{R,A}$, carring the Frobenius action, where $A$ is a perfectoid adic Banach uniform algebra over $E$, $0<s\leq r/p^{ah}$.

\begin{theorem}\mbox{\bf{(After Kedlaya-Liu \cite[Theorem 4.6.1]{KL2})}}
The categories mentioned above are equivalent to each other. (See \cref{theorem3.11}.)
	
\end{theorem}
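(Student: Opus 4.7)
The plan is to bootstrap the six-fold equivalence from the scalar coefficient case \cite[Theorem 4.6.1]{KL2} by threading the perfectoid adic Banach coefficient $A$ through each comparison, organizing the argument as a zigzag. On the Robba-ring side I would chain $\mathrm{F}\leftrightarrow\mathrm{E}\leftrightarrow\mathrm{C}\leftrightarrow\mathrm{D}$; on the adic curve side I would obtain $\mathrm{A}\leftrightarrow\mathrm{B}$ from pro-\'etale descent; and a final bridge $\mathrm{E}\leftrightarrow\mathrm{A}$ identifies Frobenius-equivariant pseudocoherent bundles over $\widetilde{\Pi}_{R,A}$ with pseudocoherent sheaves on the Frobenius quotient of $Y_{\mathrm{FF},R,A}$.

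\textbf{Order of execution.} First I would verify the interval glueing $\mathrm{E}\leftrightarrow\mathrm{F}$: the intervals $\widetilde{\Pi}^{[s,r]}_{R,A}$ with $0<s\le r/p^{ah}$ form a Frobenius-compatible admissible cover of $\widetilde{\Pi}_{R,A}$, and a two-term \v{C}ech descent produces a Frobenius-equivariant pseudocoherent bundle from a compatible family on the intervals. Next, $\mathrm{C}\leftrightarrow\mathrm{E}$ would follow from the bundle-vs-module comparison already developed in \cite{KL2}, transplanted to the current setting: the Frobenius semilinear structure forces uniform local finite presentation across intervals of comparable radii, and the Kedlaya-Liu induction on the defect from finite projectivity carries over with $A$-coefficients. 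The comparison $\mathrm{C}\leftrightarrow\mathrm{D}$ is a Frobenius-compatible base change along the flat extension $\widetilde{\Pi}^{\infty}_{R,A}\hookrightarrow\widetilde{\Pi}_{R,A}$. For $\mathrm{A}\leftrightarrow\mathrm{B}$ I would invoke pro-\'etale-to-\'etale descent for pseudocoherent sheaves on perfectoid-type adic spaces, which transfers to $Y_{\mathrm{FF},R,A}$ because this space is locally built from perfectoid uniform adic Banach algebras. The bridge $\mathrm{E}\leftrightarrow\mathrm{A}$ then follows by matching Frobenius-equivariant bundles with their descent to $X_{\mathrm{FF},R,A}=Y_{\mathrm{FF},R,A}/\varphi^{\mathbb{Z}}$, paralleling the sheaf-module correspondence of \cite{KL2}.

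\textbf{Main obstacle.} The hard part will be ensuring the correct form of sheafiness for the interval rings $\widetilde{\Pi}^{[s,r]}_{R,A}$ when $A$ is an arbitrary perfectoid adic Banach coefficient: unlike the scalar case, classical sheafiness is not automatic, and one must either verify the stable-uniform hypothesis of \cite{KL2} for the relevant rational covers or retreat to derived sheafiness in the $\infty$-Huber sense of \cite{BK}. Once this sheafiness input is in hand, the remaining steps transplant the Kedlaya-Liu two-term glueing and Frobenius-averaging arguments almost verbatim; however, pseudocoherent (as opposed to finite projective) descent additionally demands cohomological vanishing $H^{i}(\widetilde{\Pi}^{[s,r]}_{R,A},M)=0$ for $i\ge 1$ across the cover, whose verification with big coefficients, together with controlling convergence of the Frobenius iteration on pseudocoherent rather than finite objects, constitutes the main technical bottleneck.
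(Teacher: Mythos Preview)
Your overall zigzag matches the paper's proof, which likewise transplants \cite[Theorem 4.6.1]{KL2} to the $A$-coefficient setting: the paper goes from categories 1,2 to 3 via \cite[Theorem 4.4.3]{KL2}, from 3 to 4,5 by glueing over the Frobenius-translated intervals, and between 3 and 6 by projection one way and Frobenius-pullback plus interval glueing the other way.

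The one place your write-up diverges from the paper is your ``main obstacle.'' You flag sheafiness of $\widetilde{\Pi}^{[s,r]}_{R,A}$ as the hard step and contemplate retreating to the $\infty$-Huber formalism of \cite{BK}. In the setting of \cref{theorem3.11} this worry dissolves: $A$ is assumed perfectoid, so $\widetilde{\Pi}^{[s,r]}_{R,A}=\widetilde{\Pi}^{[s,r]}_{R}\widehat{\otimes}_E A$ is itself perfectoid (or preperfectoid), and the paper simply invokes ``natural sheafiness since we are working over perfectoid spaces.'' No derived input is needed here; the $\infty$-sheafiness of \cite{BK} is reserved in the paper for \cref{section4}, where $A$ is an arbitrary Banach algebra. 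A second minor difference is the specific glueing machinery: rather than a bare two-term \v{C}ech argument, the paper covers by the reified adic spaces $\mathrm{Spra}(\widetilde{\Pi}^{[sp^{-kah},rp^{-kah}]}_{R,A},\widetilde{\Pi}^{[sp^{-kah},rp^{-kah}],\mathrm{Gr}}_{R,A})$ and appeals to \cite[Proposition 2.6.17]{KL2} to extract a finitely generated global module from the Frobenius-controlled local data; this is the concrete substitute for the ``cohomological vanishing'' and ``convergence of the Frobenius iteration'' you anticipate needing.
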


\indent Now for an adic space $D$ in the pro-\'etale topology which could be covered by perfectoid subdomains over $E$, we have the following categories (objects defined over $D$ are actually compatible families):\\

\noindent A. The pseudocoherent sheaves over the adic relative Fargues-Fontaine curve $Y_{\mathrm{FF},R,D}$, here we consider \'etale topology;\\
\noindent B. The pseudocoherent sheaves over the adic relative Fargues-Fontaine curve $Y_{\mathrm{FF},R,D}$, here we consider pro-\'etale topology;\\
\noindent C. The pseudocoherent modules over the relative $D$-coefficient Robba ring $\widetilde{\Pi}_{R,D}$, carring the Frobenius action;\\
\noindent D. The pseudocoherent modules over the relative $D$-coefficient Robba ring $\widetilde{\Pi}^\infty_{R,D}$, carring the Frobenius action;\\ 
\noindent E. The pseudocoherent bundles over the relative $D$-coefficient Robba ring $\widetilde{\Pi}_{R,D}$, carring the Frobenius action;\\
\noindent F. The pseudocoherent bundles over the relative $D$-coefficient Robba ring $\widetilde{\Pi}^{[s,r]}_{R,D}$, carring the Frobenius action, $0<s\leq r/p^{ah}$.

\begin{theorem}\mbox{\bf{(After Kedlaya-Liu \cite[Theorem 4.6.1]{KL2})}}
The categories mentioned above are equivalent to each other. (See \cref{theorem3.10}.)
	
\end{theorem}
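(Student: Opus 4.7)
The plan is to reduce the statement for a general adic-space coefficient $D$ to the perfectoid-coefficient case already established in \cref{theorem3.11} via descent for the pro-\'etale topology. By hypothesis $D$ is covered, in the pro-\'etale topology, by perfectoid adic Banach uniform $E$-algebras $\{A_i\}$; by the very definition recalled in the statement, the objects parametrized by $D$ in each of the six categories A--F are compatible families indexed by such a cover. Hence each of the six categories over $D$ is, tautologically, a descent category (the totalization of the \v{C}ech nerve of the cover) for a corresponding assignment on perfectoid coefficients.

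First I would observe that for each individual index $i$, and for the (completed) fiber products entering the \v{C}ech nerve---which remain perfectoid uniform adic Banach algebras, since the perfectoid property is preserved under such completed tensor products---the preceding theorem \cref{theorem3.11} furnishes the six equivalences among the corresponding categories A--F. Moreover the constructions of $Y_{\mathrm{FF},R,-}$, $\widetilde{\Pi}_{R,-}$, $\widetilde{\Pi}^{\infty}_{R,-}$, and $\widetilde{\Pi}^{[s,r]}_{R,-}$ are all functorial in the Banach coefficient, and the equivalences of \cref{theorem3.11} are natural in that coefficient. Consequently the equivalences at each level of the nerve intertwine the restriction maps.

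Second I would pass to the limit: since the six equivalences over the perfectoid pieces are compatible with the simplicial structure, they induce six equivalences between the totalizations, which by the definition of objects over $D$ are precisely the six categories A--F attached to $D$. The Frobenius-equivariance descends automatically, since Frobenius is functorial in the coefficient and the equivalences of \cref{theorem3.11} are Frobenius-equivariant.

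The hard part is to verify that each of the six categories over $D$ really is the descent of the corresponding perfectoid-coefficient category, i.e.\ that the assignments $A\mapsto$(category of pseudocoherent sheaves/modules/bundles over the relevant Robba ring or Fargues-Fontaine curve attached to $R$ and $A$) actually define stacks for the pro-\'etale topology on perfectoid adic Banach uniform $E$-algebras. For A and B this follows from the stack property of pseudocoherent sheaves on the (perfectoid-coefficient) adic Fargues-Fontaine curve established in \cite{KL2}. For C--F the required statement is pro-\'etale descent for pseudocoherent modules (respectively pseudocoherent bundles) over $\widetilde{\Pi}_{R,A}$, $\widetilde{\Pi}^{\infty}_{R,A}$, and $\widetilde{\Pi}^{[s,r]}_{R,A}$; this in turn reduces to two ingredients, namely that these Robba-ring constructions commute with the relevant completed tensor products along morphisms of perfectoid coefficients, and that pseudocoherence of modules over a Banach ring satisfies the Kedlaya-Liu descent package. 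Both are by now standard extensions of the arguments in \cite{KL1,KL2}, but this is the step which carries the analytic content of the proof.
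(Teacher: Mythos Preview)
Your approach is correct and matches the paper's: the paper does not give a separate argument for \cref{theorem3.10} but simply says it ``could be proved by'' the perfectoid-coefficient case \cref{theorem3.11}, which is exactly the reduction you carry out. The one point worth noting is that what you flag as the ``hard part''---verifying that the six assignments form stacks for the pro-\'etale topology---is largely bypassed by the paper's setup: the objects over $D$ are \emph{defined} (see the definitions immediately preceding \cref{theorem3.10}) as compatible families over the perfectoid subdomains, so the identification of each category over $D$ with the totalization over the cover is tautological rather than something to be checked. In that sense your proof is more cautious than the paper requires, but not wrong.
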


\indent We then apply to the corresponding construction to the corresponding Fargues-Fontaine curve $Y_{\mathrm{FF},R}$ we have the following categories:\\

\noindent A. The pseudocoherent sheaves over the adic relative Fargues-Fontaine curve $Y_{\mathrm{FF},R,Y_{\mathrm{FF},R}}$, here we consider \'etale topology;\\
\noindent B. The pseudocoherent sheaves over the adic relative Fargues-Fontaine curve $Y_{\mathrm{FF},R,Y_{\mathrm{FF},R}}$, here we consider pro-\'etale topology;\\
\noindent C. The pseudocoherent modules over the relative $Y_{\mathrm{FF},R}$-coefficient Robba ring $\widetilde{\Pi}_{R,Y_{\mathrm{FF},R}}$, carring the Frobenius action;\\
\noindent D. The pseudocoherent modules over the relative $Y_{\mathrm{FF},R}$-coefficient Robba ring $\widetilde{\Pi}^\infty_{R,Y_{\mathrm{FF},R}}$, carring the Frobenius action;\\ 
\noindent E. The pseudocoherent bundles over the relative $Y_{\mathrm{FF},R}$-coefficient Robba ring $\widetilde{\Pi}_{R,Y_{\mathrm{FF},R}}$, carring the Frobenius action;\\
\noindent F. The pseudocoherent bundles over the relative $Y_{\mathrm{FF},R}$-coefficient Robba ring $\widetilde{\Pi}^{[s,r]}_{R,Y_{\mathrm{FF},R}}$, carring the Frobenius action, $0<s\leq r/p^{ah}$.

\begin{theorem}\mbox{\bf{(After Kedlaya-Liu \cite[Theorem 4.6.1]{KL2})}}
The categories mentioned above are equivalent to each other. (See \cref{corollary3.12}.)\\
	
\end{theorem}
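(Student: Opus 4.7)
The plan is to deduce this statement as a direct corollary of \cref{theorem3.10} by specializing the coefficient adic space $D$ to the relative adic Fargues-Fontaine curve $Y_{\mathrm{FF},R}$ itself. The argument therefore has two stages: first, verify that $Y_{\mathrm{FF},R}$ lies in the class of adic spaces to which \cref{theorem3.10} applies, namely those which admit a pro-\'etale cover by perfectoid subdomains over $E$; second, recognize the six listed categories as the literal substitution $D = Y_{\mathrm{FF},R}$ into the hypothesis of \cref{theorem3.10}.

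First I would address the coverability. The relative adic Fargues-Fontaine curve attached to a perfectoid uniform Banach ring $R$ over $E$ is glued from perfectoid interval subdomains $\widetilde{\Pi}^{[s,r]}_R$ in the Kedlaya-Liu construction, and in particular it is pro-\'etale locally covered by spectra of perfectoid uniform adic Banach algebras over $E$; equivalently, $Y_{\mathrm{FF},R}$ is a diamond in the sense of Scholze, so the existence of such a cover is a structural feature of the construction. This is the same class of adic spaces for which $\widetilde{\Pi}_{R,D}$ and its variants were set up in \cref{theorem3.11} and \cref{theorem3.10}, so no new foundational step is required at this level.

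With the coverability in hand, the second stage is largely formal. Setting $D = Y_{\mathrm{FF},R}$ in \cref{theorem3.10} identifies each of the six categories A--F in the present statement with its counterpart there, and the equivalences therefore transport verbatim. In particular, the \'etale versus pro-\'etale comparison (A vs.\ B) is obtained by faithfully flat descent applied on the perfectoid charts through \cref{theorem3.11}, while the comparisons among C, D, E, F reduce to module-versus-bundle and interval-gluing comparisons on the Robba intervals $[s,r]$ subject to $0<s\leq r/p^{ah}$, together with compatibility of the Frobenius action.

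The main obstacle I anticipate is bookkeeping rather than conceptual: one must check that the symbols $\widetilde{\Pi}_{R,Y_{\mathrm{FF},R}}$, $\widetilde{\Pi}^\infty_{R,Y_{\mathrm{FF},R}}$ and $\widetilde{\Pi}^{[s,r]}_{R,Y_{\mathrm{FF},R}}$ are consistently interpreted, so that the ``$Y_{\mathrm{FF},R}$-coefficient Robba ring'' really is the object one obtains by gluing the corresponding $A$-coefficient Robba rings along a perfectoid pro-\'etale cover $\{\mathrm{Spa}(A_i)\to Y_{\mathrm{FF},R}\}$, and similarly that the pseudocoherent sheaves over $Y_{\mathrm{FF},R,Y_{\mathrm{FF},R}}$ are the compatible families indexed by such a cover. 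Once this compatibility is verified, the equivalences are immediate from \cref{theorem3.10}, and no further sheafiness or convergence estimates beyond those already absorbed into that theorem are needed.
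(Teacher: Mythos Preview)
Your proposal is correct and matches the paper's approach: the statement is labeled as \cref{corollary3.12} precisely because it is obtained by specializing the general coefficient adic space $D$ in \cref{theorem3.10} to $D=Y_{\mathrm{FF},R'}$, with the local input supplied by \cref{theorem3.11}. One small correction: the interval rings $\widetilde{\Pi}^{[s,r]}_{R'}$ forming the charts of $Y_{\mathrm{FF},R'}$ are in general only \emph{preperfectoid} over $E$ (not perfectoid, since $E$ itself need not be perfectoid), so your phrase ``perfectoid interval subdomains'' is slightly off; however, your subsequent observation that $Y_{\mathrm{FF},R'}$ is pro-\'etale locally covered by perfectoid affinoids is exactly the right hypothesis for \cref{theorem3.10}, and the paper also records the preperfectoid-coefficient variant of \cref{theorem3.11} immediately after the corollary for this reason.
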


\noindent 2. For general Banach objects, we believe the corresponding picture will still be very interesting and have more potential applications. At least the relative analytic geometry (over the period rings) will be very interesting. That being said, the corresponding study of such relative geometry is really not that easy. The first difficulty will be essentially the sheafiness. Bambozzi-Kremnizer established some derived sheafiness when we replace the Huber's spectrum with the derived ones in \cite{BK}. For any Banach ring $S$ we have (with the notation in the \cref{section4}) the $\infty$-analytic space $\mathrm{Spa}^h(S)$. We then apply to our period rings with general Banach coefficient $A$ we have the corresponding $\infty$-period rings. And we then look at the following categories (with the notation in the \cref{section4}):\\

\noindent A. The corresponding $f$-projective Frobenius $\varphi^a$-bundles over the $\infty$-Robba ring $\widetilde{\Pi}^{h}_{R,A}$, here $A$ is arbitrary Banach adic uniform algebra over $\mathbb{Q}_p$;\\
\noindent B. The corresponding $f$-projective Frobenius $\varphi^a$-modules over the $\infty$-Robba ring $\widetilde{\Pi}^{[s,r],h}_{R,A}$, here $0<s\leq r/p^{ah}$.

\begin{theorem}\mbox{\bf{(After Kedlaya-Liu \cite[Theorem 4.6.1]{KL2})}}
The categories mentioned above are equivalent to each other. (See \cref{theorem4.12}.)\\
	
\end{theorem}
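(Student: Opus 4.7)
The plan is to mimic the strategy of Kedlaya-Liu \cite[Theorem 4.6.1]{KL2}, but systematically replacing the classical (strict) sheafiness, which fails for a general Banach coefficient $A$, by the derived sheafiness that Bambozzi-Kremnizer establish on $\mathrm{Spa}^h(S)$ in \cite{BK}. All statements should therefore be interpreted inside the $\infty$-category of derived sections on the $\infty$-Huber spectrum of the relevant Robba annuli, and every glueing step is performed as derived descent in this $\infty$-category rather than as an ordinary sheaf-theoretic glueing.

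First I would set up the obvious restriction functor from the category of $f$-projective $\varphi^a$-bundles on $\widetilde{\Pi}^{h}_{R,A}$ to that of $f$-projective $\varphi^a$-modules over $\widetilde{\Pi}^{[s,r],h}_{R,A}$, given by evaluation on the chosen annulus together with its Frobenius. The inverse is built by the standard Frobenius trick: given an $f$-projective $\varphi^a$-module $M^{[s,r]}$ over $\widetilde{\Pi}^{[s,r],h}_{R,A}$, the Frobenius datum identifies its derived base change to $\widetilde{\Pi}^{[s,r/p^{ah}],h}_{R,A}$ with the $\varphi^a$-pullback of the base change to $\widetilde{\Pi}^{[s/p^{ah},r/p^{ah}],h}_{R,A}$, and iterating on both sides produces a compatible family of $f$-projective modules over the Frobenius-iterated cover $\{[sp^{-iah},rp^{-iah}]\}_{i\in\mathbb{Z}}$ of $\widetilde{\Pi}^{h}_{R,A}$. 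Derived descent in the Bambozzi-Kremnizer framework then assembles this family into an $f$-projective $\varphi^a$-bundle.

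The main obstacle is verifying that the two composites are canonically equivalent to the identity in this $\infty$-categorical setting. In the sheafy case this would be automatic from ordinary glueing; here one has to check that the derived \v{C}ech complex associated to the Frobenius-iterated annular cover actually computes the global sections of the bundle just assembled, and this is precisely what the Bambozzi-Kremnizer derived sheafiness of $\mathrm{Spa}^h$ is supposed to supply. A secondary but nontrivial point is showing that the condition of being $f$-projective is preserved under derived base change to each annulus and under $\varphi^a$-pullback, and that the transition data in the descent datum are canonical enough to upgrade the argument from an equivalence of objects to an equivalence of $\infty$-categories; functoriality on morphisms should then follow by applying the same Frobenius descent to internal $\mathrm{Hom}$-complexes.

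Once these ingredients are in place, the remainder of the argument mirrors \cite[Theorem 4.6.1]{KL2}: full faithfulness is reduced to a computation on a single annulus via Frobenius descent, and essential surjectivity is witnessed by the glueing construction above, giving the desired equivalence of $\infty$-categories.
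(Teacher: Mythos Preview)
Your overall shape (projection forward, Frobenius translates backward, then glue) matches the paper, but you diverge from it at the crucial glueing step and leave open exactly the point the paper resolves.

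First, recall that in this paper a $\varphi^a$-bundle over $\widetilde{\Pi}^h_{R,A}$ is \emph{defined} as a compatible family of $f$-projective modules over the annuli $\widetilde{\Pi}^{[s',r'],h}_{R,A}$; it is not a global object that must be ``assembled by derived descent.'' So the backward functor does not require running derived \v{C}ech descent over the full Frobenius-iterated cover: it only requires, given modules on two overlapping Frobenius-translate annuli $[sp^{-nah},rp^{-nah}]$ and $[sp^{-(n+1)ah},rp^{-(n+1)ah}]$, producing a single $f$-projective module on their union. Your proposal reformulates this as abstract derived descent and then flags, without resolving, whether the descended object is again $f$-projective.

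The paper avoids this entirely by reducing to $\pi_0$. By the Lurie characterization you cite (\cite[Corollary 7.2.2.9]{Lu1}), $f$-projective modules over a connective ring are controlled by finite projective modules over $\pi_0$; and the global sections $\widetilde{\Pi}^{[s,r],h}_{R,A}$ have $\pi_0$ equal to the classical Robba ring $\widetilde{\Pi}^{[s,r]}_{R,A}$. So the two-interval glueing problem is a classical one, and the paper dispatches it by invoking \cite[Theorem 1.3.9]{KL1} on $\pi_0$ (indeed, Proposition~\ref{proposition5.11con} below records Kedlaya's observation that the spectral hypothesis in that theorem can be dropped, so finite projectivity of the glued $\pi_0$-module is guaranteed). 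This simultaneously handles your ``secondary but nontrivial point'' about preservation of $f$-projectivity: it is immediate once one works on $\pi_0$, and no genuine $\infty$-categorical descent argument is needed. The derived sheafiness from \cite{BK} is used only to know that the $\infty$-structure presheaves are sheaves, not to perform the glueing itself.
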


\noindent 3. We revisit the corresponding noncommutative descent which was initiated in \cite{T2}. We will noncommutativize some argument due to Kedlaya on the corresponding Kiehl's glueing properties without the noetherian assumption. This means that we could actually descent the corresponding finite projective modules in the corresponding noncommutative setting.

\begin{proposition} 
The descent for finite projective bimodules over noncommutative Banach rings holds under the conditions in \cite[Definition 2.7.3 (a),(b)]{KL1}.	(See \cref{proposition5.11}.)\\
\end{proposition}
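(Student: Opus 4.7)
The plan is to noncommutativize Kedlaya's noetherian-free Kiehl glueing argument, carefully tracking bimodule structures throughout. The input is a simple Laurent-type covering $S \to S_1 \oplus S_2$ of a noncommutative Banach ring $S$ with intersection $S_{12}$, satisfying the noncommutative analogues of conditions (a),(b) of \cite[Definition 2.7.3]{KL1}, together with finite projective $(S_i,S_i)$-bimodules $M_i$ and a bimodule isomorphism after completed base change to $S_{12}$ satisfying the evident cocycle condition. The goal is to produce a finite projective $(S,S)$-bimodule $M$ pulling back to the $M_i$.

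First I would form $M := \ker(M_1 \oplus M_2 \to M_{12})$ in the category of $(S,S)$-bimodules; the left and right $S$-actions on $M$ are automatic because all arrows in the Cech-style diagram are bimodule maps. The completeness of $M$ in the induced norm topology and the base change isomorphisms $M \widehat{\otimes}_S S_i \cong M_i$ then follow from the strictness of the Cech complex tensored with any reasonable Banach bimodule, which is exactly what the noncommutative versions of conditions (a),(b) are set up to provide. This portion of Kedlaya's argument uses only the open mapping theorem and strictness, and so passes to the noncommutative setting without essential change.

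The heart of the matter is finite projectivity of $M$. A bimodule is finite projective iff it is a direct summand of a finite free bimodule, so I would run Kedlaya's approximation argument: construct by successive approximation a surjection from a finite free bimodule onto $M$, produce splittings after base change to each $S_i$ using finite projectivity of the $M_i$, and then glue the splittings via the same Cech sequence to produce a splitting over $S$. Here \emph{finite free bimodule} should be interpreted as a finite direct sum of copies of $S \widehat{\otimes}_R S^{\mathrm{op}}$ for a fixed Banach base ring $R$ over which all the rings in sight are defined, or equivalently as a finite projective module over the enveloping Banach ring.

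The main obstacle, and the reason this is not entirely cosmetic, is the asymmetry introduced by tracking both actions simultaneously. Unlike the commutative case the completed tensor product carries no canonical symmetric monoidal structure, and the Kedlaya approximation must be organized with the envelope $S \widehat{\otimes}_R S^{\mathrm{op}}$ in mind so that base change on either side is well-behaved and the strictness hypotheses transfer correctly. Once this bookkeeping is fixed, the descent of summands proceeds as in the commutative case, producing $M$ as the desired finite projective bimodule.
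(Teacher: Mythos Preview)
Your plan diverges from the paper's in a substantive way. The paper does \emph{not} pass to the enveloping algebra $S\widehat{\otimes}_R S^{\mathrm{op}}$. Instead it invokes the one-sided descent already established in the two preceding propositions (for right modules and for left modules separately) to get $M$ finitely presented on each side, and then runs Kedlaya's splitting-glueing trick twice. Concretely: take a presentation $B\to A\to M$, note that the $\mathrm{Hom}(M_i,B_i)$ themselves form a (one-sided) glueing datum, so $\mathrm{Hom}(M_1,B_1)\oplus\mathrm{Hom}(M_2,B_2)\to\mathrm{Hom}(M_{12},B_{12})$ is surjective; the difference of two local splittings $s_1,s_2$ lands in $\mathrm{Hom}(M_{12},B_{12})$, and a preimage lets you correct $s_1,s_2$ to a compatible pair, producing a splitting of $M$ as a left module. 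The same argument with sides swapped gives a right splitting. In particular, ``finite projective bimodule'' in the paper means finite projective separately as a left module and as a right module, not projective over the enveloping algebra.

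Your enveloping-algebra reduction is a natural idea, but as written it has a real gap, not just bookkeeping. For it to go through you need the sequence
\[
0\to S^e\to S_1^e\oplus S_2^e\to S_{12}^e\to 0
\]
of enveloping algebras to again satisfy the conditions of \cite[Definition 2.7.3 (a),(b)]{KL1}. This does not follow formally from the hypotheses on $S,S_1,S_2,S_{12}$: completed tensor products over $R$ need not preserve strict exactness, and the candidate intersection ring $S_{12}\widehat{\otimes}_R S_{12}^{\mathrm{op}}$ is not obviously the correct pushout of $S_1^e$ and $S_2^e$ over $S^e$. You acknowledge the asymmetry issue but treat it as organizational; in fact verifying the glueing-square conditions for $S^e$ is the entire content, and the paper's route sidesteps it by never forming $S^e$ at all. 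The trade-off is that the paper obtains only the weaker, side-by-side projectivity, which however suffices for its application to Frobenius bibundles over Robba rings.
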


\indent Then we apply this to $p$-adic Hodge theory, we look at the following categories:\\
\noindent A. The corresponding finite projective Frobenius $\varphi^a$-bundles over the Robba ring $\widetilde{\Pi}_{R,A}$, here $A$ is arbitrary Banach algebra over $\mathbb{Q}_p$;\\
\noindent B. The corresponding finite projective Frobenius $\varphi^a$-modules over the Robba ring $\widetilde{\Pi}^{[s,r]}_{R,A}$, here $0<s\leq r/p^{ah}$.

\begin{proposition}\mbox{\bf{(After Kedlaya-Liu \cite[Theorem 4.6.1]{KL2})}}
The categories mentioned above are equivalent to each other. (See \cref{proposition5.13}.)\\
\end{proposition}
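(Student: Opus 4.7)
The plan is to adapt the strategy of \cite[Theorem 4.6.1]{KL2} to the noncommutative-coefficient setting by combining Frobenius-equivariant spreading of a module on a single annulus with the noncommutative Kiehl-type gluing supplied by \cref{proposition5.11}. The restriction functor sending A to B is the obvious one: a finite projective Frobenius $\varphi^a$-bundle over $\widetilde{\Pi}_{R,A}$ restricts to a finite projective module over $\widetilde{\Pi}^{[s,r]}_{R,A}$. Since $s\le r/p^{ah}$ the annuli $[s,r]$ and $[s/p^{ah},r/p^{ah}]$ overlap in $[s,r/p^{ah}]$, so the global Frobenius descends to the required semi-linear isomorphism on the overlap annulus.

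For the inverse direction, I would start with a finite projective module $M$ over $\widetilde{\Pi}^{[s,r]}_{R,A}$ together with a Frobenius semi-linear isomorphism on the overlap annulus, and propagate $M$ to each subannulus $[s/p^{nah},r/p^{nah}]$, $n\in\mathbb{Z}_{\ge 0}$, by $\varphi^{a,n}$-pullback. The hypothesis $s\le r/p^{ah}$ forces consecutive intervals to meet in $[s/p^{nah},r/p^{(n+1)ah}]$, and on these overlaps the two pullbacks agree by the prescribed Frobenius-equivariance. Further translation by positive and negative powers of $\varphi^a$ yields a patching datum of finite projective bimodules over a cover of the whole adic Robba space by closed subannuli.

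The central step is then to invoke \cref{proposition5.11} to descend this patching datum to a single finite projective bimodule carrying a $\varphi^a$-action, i.e.\ a finite projective Frobenius bundle over $\widetilde{\Pi}_{R,A}$. For this I would verify that the two-fold covers of each intermediate annulus by its two closed subannuli $[s',t]$ and $[t,r']$ satisfy the hypotheses \cite[Definition 2.7.3 (a),(b)]{KL1} in the noncommutative bimodule sense required by \cref{proposition5.11}. The two functors are then mutually quasi-inverse: one composition is the identity by construction of the patching, and the other because a finite projective Frobenius bundle is determined up to canonical isomorphism by its restriction to any single annulus once the Frobenius is given.

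The step I expect to be the main obstacle is not the spreading-and-gluing architecture, which is formally parallel to the classical case, but rather the verification that \cref{proposition5.11} genuinely applies in this setting. The noncommutativity of $A$ forces the patching to take place at the level of bimodules, and the strictness of the relevant \v{C}ech differentials is a delicate replacement for the faithfully flat descent routinely used in the commutative Kedlaya--Liu framework. A secondary concern is to check that the Frobenius structure itself is preserved by the bimodule gluing, so that the global object indeed carries a genuine $\varphi^a$-action and not merely a one-sided remnant of it.
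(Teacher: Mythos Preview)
Your proposal is correct and follows essentially the same approach as the paper: the forward functor is projection to the single annulus, and the inverse is Frobenius-propagation to the intervals $[sp^{-kah},rp^{-kah}]$ followed by glueing via \cref{proposition5.11}. The paper's proof is more terse and does not spell out your secondary concerns (verifying the hypotheses of \cref{proposition5.11} or compatibility of the glued object with the Frobenius), but the architecture is identical.
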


\subsection{Convention and Notation}

\indent We make some convention here. When we say an $\infty$-analytic stack, we will mean a $\infty$-sheaf fibered over the corresponding category of all the seminormed commutative monoids satisfying the condition and framework in \cite{BBBK}. Please note the difference between a derived analytic stack, an $\infty$-analytic stack and a stack, fibered over category of seminormed commutative monoids, as in \cite{BBBK}.\\

\subsection{Remarks on Future Work}

\indent We will consider more general $\infty$-glueing in the style of Kedlaya-Liu \cite{KL2} after the foundations we have applied from \cite{BK} (or possibly equivalently work of Clausen-Scholze \cite{CS}) whenver one would study more general quasi-coherent sheaves over some sheaves of Banach $\mathcal{O}$-algebras over reasonable sites essentially encoded in our current work.\\

\indent We have not considered the corresponding imperfect setting of the corresponding period rings with really bit coefficients along the corresponding style and fashions we established in \cite{T1} and \cite{T2} in the context of reasonable towers in \cite{KL1} and \cite{KL2}. We would like to study the corresponding commutative and noncommutative setting in future work along these towers going upside and down. There will be some consequences for some specific analytic spaces. Also we could basically contact the corresponding $\infty$-context as well by considering the corresponding sheaves of $\infty$-Banach algebras over some interesting pro-\'etale site over adic spaces.\\

\newpage

\section{Period Rings with Coefficients in General Adic Spaces}

\subsection{Period Rings with Coefficients in General Banach Adic Rings}

\begin{setting}
We now consider the corresponding Kedlaya-Liu's period sheaves with the big coefficients in general adic spaces. We will consider those adic spaces which could be written as the corresponding quotients of the corresponding affinoid (pre-)perfectoid spaces. To be more precise we look at the following rings, we first consider some finite multi-intervals $[s_I,r_I]$ (with real bounds) with $0\leq s_\alpha \leq r_\alpha< \infty$ for each $\alpha \in I$ and we consider the corresponding base field $E$ which is complete nonarchimedian discrete valued carrying some Banach norm, taking the form of $\mathbb{Q}_p$ or $\mathbb{F}_p((\overline{t}))$ with the uniform notation $\pi$ for chosen uniformizer. We then consider over $E$ the corresponding $E$-strictly affinoids:
\begin{align}
E\{s_1/T_1,s_2/T_2,...,s_n/T_n,T_1/r_1,...,T_n/r_n\}	
\end{align}
and we have the corresponding Fr\'echet rings:
\begin{align}
\varprojlim_{r_\alpha \rightarrow 1,\forall \alpha\in I} E\{s_1/T_1,s_2/T_2,...,s_n/T_n,T_1/r_1,...,T_n/r_n\}
\end{align}
with the corresponding ind-Fr\'echet rings:
\begin{align}
\varinjlim_{s_\alpha \rightarrow 0^+s,\forall \alpha\in I}\varprojlim_{r_\alpha \rightarrow 1,\forall \alpha\in I} E\{s_1/T_1,s_2/T_2,...,s_n/T_n,T_1/r_1,...,T_n/r_n\}.
\end{align}
We then consider the following:
\begin{align}
A_{[s_I,r_I]}=E(\pi^{1/p^\infty})\{(s_1/T_1)^{1/p^\infty},(s_2/T_2)^{1/p^\infty},...,(s_n/T_n)^{1/p^\infty},(T_1/r_1)^{1/p^\infty},...,(T_n/r_n)^{1/p^\infty}\}^\wedge,\\
A_{[s_I,r_I]}^+=\mathcal{O}[\pi^{1/p^\infty}]\{(s_1/T_1)^{1/p^\infty},(s_2/T_2)^{1/p^\infty},...,(s_n/T_n)^{1/p^\infty},(T_1/r_1)^{1/p^\infty},...,(T_n/r_n)^{1/p^\infty}\}^\wedge.	
\end{align}
And we also consider the corresponding Fr\'echet ones:
\begin{align}
A_{s_I}= \varprojlim_{r_\alpha \rightarrow 1,\forall \alpha\in I}E(\pi^{1/p^\infty})\{(s_1/T_1)^{1/p^\infty},(s_2/T_2)^{1/p^\infty},...,(s_n/T_n)^{1/p^\infty},(T_1/r_1)^{1/p^\infty},...,(T_n/r_n)^{1/p^\infty}\}^\wedge,\\
A_{r_I}^+= \varprojlim_{r_\alpha \rightarrow 1,\forall \alpha\in I}\mathcal{O}[\pi^{1/p^\infty}]\{(s_1/T_1)^{1/p^\infty},(s_2/T_2)^{1/p^\infty},...,(s_n/T_n)^{1/p^\infty},(T_1/r_1)^{1/p^\infty},...,(T_n/r_n)^{1/p^\infty}\}^\wedge.	
\end{align}
\end{setting}

\begin{setting}
We use the corresponding notations as in the following to denote the corresponding preperfectoid ones:
\begin{align}
A'_{[s_I,r_I]}=E\{(s_1/T_1)^{1/p^\infty},(s_2/T_2)^{1/p^\infty},...,(s_n/T_n)^{1/p^\infty},(T_1/r_1)^{1/p^\infty},...,(T_n/r_n)^{1/p^\infty}\}^\wedge,\\
A_{[s_I,r_I]}^{'+}=\mathcal{O}\{(s_1/T_1)^{1/p^\infty},(s_2/T_2)^{1/p^\infty},...,(s_n/T_n)^{1/p^\infty},(T_1/r_1)^{1/p^\infty},...,(T_n/r_n)^{1/p^\infty}\}^\wedge.	
\end{align}
And we also consider the corresponding Fr\'echet ones:
\begin{align}
A'_{s_I}= \varprojlim_{r_\alpha \rightarrow 1,\forall \alpha\in I}E\{(s_1/T_1)^{1/p^\infty},(s_2/T_2)^{1/p^\infty},...,(s_n/T_n)^{1/p^\infty},(T_1/r_1)^{1/p^\infty},...,(T_n/r_n)^{1/p^\infty}\}^\wedge,\\
A_{r_I}^{'+}= \varprojlim_{r_\alpha \rightarrow 1,\forall \alpha\in I}\mathcal{O}\{(s_1/T_1)^{1/p^\infty},(s_2/T_2)^{1/p^\infty},...,(s_n/T_n)^{1/p^\infty},(T_1/r_1)^{1/p^\infty},...,(T_n/r_n)^{1/p^\infty}\}^\wedge.	
\end{align}	
\end{setting}

\indent Then we consider the corresponding context of \cite[Chapter 1]{T2}, where with trivial coefficients we just consider the same context after \cite{KL2}. First we have the corresponding consideration, let $F$ be the corresponding field which is a complete discrete valued nnonarchimedean field with uniformizer $\pi$ with normalized Banach norm $\|.\|_F$ such the corresponding evaluation on $\pi$ is just $p^{-1}$, and we assume the residue field of $F$ is just $\mathbb{F}_{p^h}$ for some integer $h>0$. Now we define the following big Robba rings:

\begin{definition} \mbox{\bf{(After Kedlaya-Liu \cite[Definition 4.1.1]{KL2})}}
We let $A$ be a perfectoid adic uniform Banach algebra over $E$ with integral subring $\mathcal{O}_A$ over $\mathcal{O}_E$. Recall from the corresponding context in \cite{KL1} we have the corresponding period rings in the relative setting. We follow the corresponding notations we used in \cite[Section 2.1]{T2} for those corresponding period rings. We first have for a pair $(R,R^+)$ where $R$ is a uniform perfect adic Banach ring over the assumed base $\mathcal{O}_F$. Then we take the corresponding generalized Witt vectors taking the form of $W_{\mathcal{O}_E}(R)$, which is just the ring $\widetilde{\Omega}_R^\mathrm{int}$, and by inverting the corresponding uniformizer we have the ring $\widetilde{\Omega}_R$, then by taking the completed product with $A$ we have $\widetilde{\Omega}_{R,A}$. Now for some $r>0$ we consider the ring $\widetilde{\Pi}^{\mathrm{int},r}_{R}$ which is the completion of $W_{\mathcal{O}_E}(R^+)[[R]]$ (note that this is actually just $W_{\mathcal{O}_E}(R^+)[[x],x\in R]$) by the norm $\|.\|_{\alpha^r}$ defined by:
\begin{align}
\|.\|_{\alpha^r}(\sum_{n\geq 0}\pi^n[\overline{r}_n])=\sup_{n\geq 0}\{p^{-n}\alpha(\overline{r}_n)^r\}.	
\end{align}
Then we have the product $\widetilde{\Pi}^{\mathrm{bd},r}_{R,A}$ defined as the completion under the product norm $\|.\|_{\alpha^r}\otimes \|.\|_A$ of the corresponding ring $\widetilde{\Pi}^{\mathrm{bd},r}_{R}\otimes_{E}A$, which could be also defined from the corresponding integral Robba rings defined above. Then we define the corresponding Robba ring for some interval $I\subset (0,\infty)$ with coefficient in the perfectoid ring $A$ denoted by $\widetilde{\Pi}^{I}_{R,A}$ as the following complete tensor product:
\begin{displaymath}
\widetilde{\Pi}^{I}_{R}\widehat{\otimes}_{E} A	
\end{displaymath}
under the the corresponding tensor product norm $\|.\|_{\alpha^r}\otimes \|.\|_A$. Then we set $\widetilde{\Pi}^r_{R,A}$ as $\varprojlim_{s\rightarrow 0}\widetilde{\Pi}^{[s,r]}_{R,A}$, and then we define $\widetilde{\Pi}_{R,A}$ as $\varinjlim_{r\rightarrow \infty}\widetilde{\Pi}^{[s,r]}_{R,A}$, and we define $\widetilde{\Pi}^\infty_{R,A}$ as $\varprojlim_{r\rightarrow \infty}\widetilde{\Pi}^{r}_{R,A}$. And we also have the corresponding full integral Robba ring and the corresponding full bounded Robba ring by taking the corresponding union through all $r>0$.
\end{definition}

\indent One can even consider the corresponding coefficients in the corresponding Banach adic uniform algebra over the base field.

\begin{definition} \mbox{\bf{(After Kedlaya-Liu \cite[Definition 4.1.1]{KL2})}}
We let $B$ be an adic uniform Banach algebra over $E$ with integral subring $\mathcal{O}_B$ over $\mathcal{O}_E$. Recall from the corresponding context in \cite{KL1} we have the corresponding period rings in the relative setting. We follow the corresponding notations we used in \cite[Section 2.1]{T2} for those corresponding period rings. We first have for a pair $(R,R^+)$ where $R$ is a uniform perfect adic Banach ring over the assumed base $\mathcal{O}_F$. Then we take the corresponding generalized Witt vectors taking the form of $W_{\mathcal{O}_E}(R)$, which is just the ring $\widetilde{\Omega}_R^\mathrm{int}$, and by inverting the corresponding uniformizer we have the ring $\widetilde{\Omega}_R$, then by taking the completed product with $B$ we have $\widetilde{\Omega}_{R,B}$. Now for some $r>0$ we consider the ring $\widetilde{\Pi}^{\mathrm{int},r}_{R}$ which is the completion of $W_{\mathcal{O}_E}(R^+)[[r]:r\in R] $ by the norm $\|.\|_{\alpha^r}$ defined by:
\begin{align}
\|.\|_{\alpha^r}(\sum_{n\geq 0}\pi^n[\overline{r}_n])=\sup_{n\geq 0}\{p^{-n}\alpha(\overline{r}_n)^r\}.	
\end{align}
Then we have the product $\widetilde{\Pi}^{\mathrm{bd},r}_{R,B}$ defined as the completion under the product norm $\|.\|_{\alpha^r}\otimes \|.\|_B$ of the corresponding ring $\widetilde{\Pi}^{\mathrm{bd},r}_{R}\otimes_{E }B$, which could be also defined from the corresponding integral Robba rings defined above. Then we define the corresponding Robba ring for some interval $I\subset (0,\infty)$ with coefficient in the perfectoid ring $B$ denoted by $\widetilde{\Pi}^{I}_{R,B}$ as the following complete tensor product:
\begin{displaymath}
\widetilde{\Pi}^{I}_{R}\widehat{\otimes}_{E} B	
\end{displaymath}
under the the corresponding tensor product norm $\|.\|_{\alpha^r}\otimes \|.\|_B$. Then we set $\widetilde{\Pi}^r_{R,B}$ as $\varprojlim_{s\rightarrow 0}\widetilde{\Pi}^{[s,r]}_{R,B}$, and then we define $\widetilde{\Pi}_{R,B}$ as $\varinjlim_{r\rightarrow \infty}\widetilde{\Pi}^{[s,r]}_{R,B}$, and we define $\widetilde{\Pi}^\infty_{R,B}$ as $\varprojlim_{r\rightarrow \infty}\widetilde{\Pi}^{r}_{R,B}$. And we also have the corresponding full integral Robba ring and the corresponding full bounded Robba ring by taking the corresponding union through all $r>0$.
\end{definition}

\indent In some situation we also need the corresponding preperfectoid coefficients for instance the local chart coming from the adic Fargues-Fontaine curves:

\begin{definition}  \mbox{\bf{(After Kedlaya-Liu \cite[Definition 4.1.1]{KL2})}}
We let $A$ be a preperfectoid adic uniform Banach algebra over $E$ with integral subring $\mathcal{O}_A$ over $\mathcal{O}_E$. Recall from the corresponding context in \cite{KL1} we have the corresponding period rings in the relative setting. We follow the corresponding notations we used in \cite[Section 2.1]{T2} for those corresponding period rings. We first have for a pair $(R,R^+)$ where $R$ is a uniform perfect adic Banach ring over the assumed base $\mathcal{O}_F$. Then we take the corresponding generalized Witt vectors taking the form of $W_{\mathcal{O}_E}(R)$, which is just the ring $\widetilde{\Omega}_R^\mathrm{int}$, and by inverting the corresponding uniformizer we have the ring $\widetilde{\Omega}_R$, then by taking the completed product with $A$ we have $\widetilde{\Omega}_{R,A}$. Now for some $r>0$ we consider the ring $\widetilde{\Pi}^{\mathrm{int},r}_{R}$ which is the completion of $W_{\mathcal{O}_E}(R^+)[[r]:r\in R] $ by the norm $\|.\|_{\alpha^r}$ defined by:
\begin{align}
\|.\|_{\alpha^r}(\sum_{n\geq 0}\pi^n[\overline{r}_n])=\sup_{n\geq 0}\{p^{-n}\alpha(\overline{r}_n)^r\}.	
\end{align}
Then we have the product $\widetilde{\Pi}^{\mathrm{bd},r}_{R,A}$ defined as the completion under the product norm $\|.\|_{\alpha^r}\otimes \|.\|_A$ of the corresponding ring $\widetilde{\Pi}^{\mathrm{bd},r}_{R}\otimes_{E}A$, which could be also defined from the corresponding integral Robba rings defined above. Then we define the corresponding Robba ring for some interval $I\subset (0,\infty)$ with coefficient in the perfectoid ring $A$ denoted by $\widetilde{\Pi}^{I}_{R,A}$ as the following complete tensor product:
\begin{displaymath}
\widetilde{\Pi}^{I}_{R}\widehat{\otimes}_{E} A	
\end{displaymath}
under the the corresponding tensor product norm $\|.\|_{\alpha^r}\otimes \|.\|_A$. Then we set $\widetilde{\Pi}^r_{R,A}$ as $\varprojlim_{s\rightarrow 0}\widetilde{\Pi}^{[s,r]}_{R,A}$, and then we define $\widetilde{\Pi}_{R,A}$ as $\varinjlim_{r\rightarrow \infty}\widetilde{\Pi}^{[s,r]}_{R,A}$, and we define $\widetilde{\Pi}^\infty_{R,A}$ as $\varprojlim_{r\rightarrow \infty}\widetilde{\Pi}^{r}_{R,A}$. And we also have the corresponding full integral Robba ring and the corresponding full bounded Robba ring by taking the corresponding union through all $r>0$.
\end{definition}

\indent In the previous situation where we have the more explicit representation of the corresponding perfectoid coefficients we can basically consider more explicit construction:

\begin{definition}\mbox{\bf{(After Kedlaya-Liu \cite[Definition 4.1.1]{KL2})}}
We let $A$ be a perfectoid adic uniform Banach algebra over $E$ with integral subring $\mathcal{O}_A$ over $\mathcal{O}_E$ taking the form of one of the rings $A_{[s_I,r_I]}$. Recall from the corresponding context in \cite{KL1} we have the corresponding period rings in the relative setting. We follow the corresponding notations we used in \cite[Section 2.1]{T2} for those corresponding period rings. We first have for a pair $(R,R^+)$ where $R$ is a uniform perfect adic Banach ring over the assumed base $\mathcal{O}_F$. Then we take the corresponding generalized Witt vectors taking the form of $W_{\mathcal{O}_E}(R)$, which is just the ring $\widetilde{\Omega}_R^\mathrm{int}$, and by inverting the corresponding uniformizer we have the ring $\widetilde{\Omega}_R$, then by taking the completed product with $A$ we have $\widetilde{\Omega}_{R,A}$. Now for some $r>0$ we consider the ring $\widetilde{\Pi}^{\mathrm{int},r}_{R,A}$ which is the completion of $W_{\mathcal{O}_E}(R^+)[[r]:r\in R]\otimes \mathcal{O}_A $ by the norm $\|.\|_{\alpha^r,A}$ defined by:
\begin{align}
\|.\|_{\alpha^r}(\sum_{n,i_1,...,i_k,j_1,...,j_k\in \mathbb{Z}[1/p]_{\geq 0}}\pi^n[\overline{r}_n](s_1/T_1)^{i_1}...(s_k/T_k)^{i_k}(T_1/r_1)^{j_1}...(T_k/r_k)^{j_k})=\sup_{n\geq 0}\{p^{-n}\alpha(\overline{r}_n)^r\}.	
\end{align}
Then we have the product $\widetilde{\Pi}^{\mathrm{bd},r}_{R,A}$ defined as the completion under the product norm $\|.\|_{\alpha^r}\otimes \|.\|_A$ of the corresponding ring $\widetilde{\Pi}^{\mathrm{bd},r}_{R}\otimes_{E}A$, which could be also defined from the corresponding integral Robba rings defined above. Then we define the corresponding Robba ring for some interval $I\subset (0,\infty)$ with coefficient in the perfectoid ring $A$ denoted by $\widetilde{\Pi}^{I}_{R,A}$ as the following complete tensor product:
\begin{displaymath}
\widetilde{\Pi}^{I}_{R}\widehat{\otimes}_{E} E(\pi^{1/p^\infty})\{(s_1/T_1)^{1/p^\infty},(s_2/T_2)^{1/p^\infty},...,(s_n/T_n)^{1/p^\infty},(T_1/r_1)^{1/p^\infty},...,(T_n/r_n)^{1/p^\infty}\}^\wedge	
\end{displaymath}
under the the corresponding tensor product norm $\|.\|_{\alpha^r}\otimes \|.\|_A$. Then we set $\widetilde{\Pi}^r_{R,A}$ as $\varprojlim_{s\rightarrow 0}\widetilde{\Pi}^{[s,r]}_{R,A}$, and then we define $\widetilde{\Pi}_{R,A}$ as $\varinjlim_{r\rightarrow \infty}\widetilde{\Pi}^{[s,r]}_{R,A}$, and we define $\widetilde{\Pi}^\infty_{R,A}$ as $\varprojlim_{r\rightarrow \infty}\widetilde{\Pi}^{r}_{R,A}$. And we also have the corresponding full integral Robba ring and the corresponding full bounded Robba ring by taking the corresponding union through all $r>0$.\\
\end{definition}


\subsection{Properties}

\indent As in our work \cite{T2} and \cite{KL2}, we can discuss some of the key properties of the corresponding period rings defined above.

\begin{setting}
We will mainly consider the corresponding coefficients in 
\begin{center}
$E(\pi^{1/p^\infty})\{(s_1/T_1)^{1/p^\infty},(s_2/T_2)^{1/p^\infty},...,(s_n/T_n)^{1/p^\infty},(T_1/r_1)^{1/p^\infty},...,(T_n/r_n)^{1/p^\infty}\}^\wedge$ 
\end{center}
and their Banach uniform adic strict quotient through the following strict morphism:
\begin{displaymath}
A_{[s_I,r_I]}\rightarrow \overline{A_{[s_I,r_I]}}\rightarrow 0.	
\end{displaymath}
Also we consider the corresponding preperfectoid setting:
\begin{center}
$E\{(s_1/T_1)^{1/p^\infty},(s_2/T_2)^{1/p^\infty},...,(s_n/T_n)^{1/p^\infty},(T_1/r_1)^{1/p^\infty},...,(T_n/r_n)^{1/p^\infty}\}^\wedge$ 
\end{center}
and their Banach uniform adic strict quotient through the following strict morphism:
\begin{displaymath}
A'_{[s_I,r_I]}\rightarrow \overline{A'_{[s_I,r_I]}}\rightarrow 0.	
\end{displaymath} 
 	
\end{setting}

	

	

\begin{proposition} \mbox{\bf{(After Kedlaya-Liu \cite[Lemma 5.2.6]{KL1})}} 
Under the corresponding definitions and the corresponding notations in our context we have the following identification:
\begin{align}
\widetilde{\Pi}^{\mathrm{int},s}_{R,A_{[s_I,r_I]}}\bigcap \widetilde{\Pi}^{[s,r]}_{R,A_{[s_I,r_I]}}=	\widetilde{\Pi}^{\mathrm{int},r}_{R,A_{[s_I,r_I]}}\\
\widetilde{\Pi}^{\mathrm{int},s}_{R,\overline{A_{[s_I,r_I]}}}\bigcap \widetilde{\Pi}^{[s,r]}_{R,\overline{A_{[s_I,r_I]}}}=	\widetilde{\Pi}^{\mathrm{int},r}_{R,\overline{A_{[s_I,r_I]}}}.
\end{align}
	
\end{proposition}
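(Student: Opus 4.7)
The plan is to reduce both equalities to the coefficient-free identification in \cite[Lemma 5.2.6]{KL1}, namely $\widetilde{\Pi}^{\mathrm{int},s}_R\cap \widetilde{\Pi}^{[s,r]}_R=\widetilde{\Pi}^{\mathrm{int},r}_R$, by exploiting a Schauder-basis decomposition of the perfectoid coefficient $A_{[s_I,r_I]}$ over $E(\pi^{1/p^\infty})$. By construction, $A_{[s_I,r_I]}$ is the uniform completion of the $E(\pi^{1/p^\infty})$-algebra generated by the $p$-power-root monomials $(s_\alpha/T_\alpha)^{1/p^\infty}$ and $(T_\alpha/r_\alpha)^{1/p^\infty}$, and so as a Banach $E(\pi^{1/p^\infty})$-module it admits a topological orthonormal basis $\{e_{\mathbf{a},\mathbf{b}}\}$ indexed by pairs $\mathbf{a},\mathbf{b}\in \mathbb{Z}[1/p]_{\geq 0}^{|I|}$ subject to the disjointness condition ruling out the relation $(s_\alpha/T_\alpha)(T_\alpha/r_\alpha)=s_\alpha/r_\alpha\in E$. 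Because each of the three rings in the statement is defined through the product-norm completed tensor product $\widetilde{\Pi}^{?}_R\,\widehat{\otimes}_E\,A_{[s_I,r_I]}$, the orthonormal basis induces an orthogonal completed direct sum decomposition
\begin{align*}
\widetilde{\Pi}^{?}_{R,A_{[s_I,r_I]}}\;\cong\;\widehat{\bigoplus}_{(\mathbf{a},\mathbf{b})}\,\widetilde{\Pi}^{?}_R\cdot e_{\mathbf{a},\mathbf{b}},
\end{align*}
where ``?'' ranges over $[s,r]$, $(\mathrm{int},s)$, and $(\mathrm{int},r)$.

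From this, an element $\xi=\sum_{(\mathbf{a},\mathbf{b})}\xi_{\mathbf{a},\mathbf{b}}\,e_{\mathbf{a},\mathbf{b}}$ lies in $\widetilde{\Pi}^{\mathrm{int},s}_{R,A_{[s_I,r_I]}}\cap \widetilde{\Pi}^{[s,r]}_{R,A_{[s_I,r_I]}}$ if and only if each coefficient $\xi_{\mathbf{a},\mathbf{b}}$ simultaneously lies in $\widetilde{\Pi}^{\mathrm{int},s}_R$ and in $\widetilde{\Pi}^{[s,r]}_R$. Applying \cite[Lemma 5.2.6]{KL1} to each coefficient identifies this condition with $\xi_{\mathbf{a},\mathbf{b}}\in \widetilde{\Pi}^{\mathrm{int},r}_R$, and reassembling via the same orthogonal direct sum recovers the desired identification $\widetilde{\Pi}^{\mathrm{int},s}_{R,A_{[s_I,r_I]}}\cap \widetilde{\Pi}^{[s,r]}_{R,A_{[s_I,r_I]}}=\widetilde{\Pi}^{\mathrm{int},r}_{R,A_{[s_I,r_I]}}$. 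The convergence of the reassembled series under $\|\cdot\|_{\alpha^r}\otimes\|\cdot\|_{A}$ is automatic, since the coefficient-free identification is norm-compatible and the basis vectors are orthonormal with respect to all three product norms.

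For the strict quotient $\overline{A_{[s_I,r_I]}}$, I would descend the argument through the quotient map. Writing $J$ for the closed ideal defining the strict surjection $A_{[s_I,r_I]}\twoheadrightarrow \overline{A_{[s_I,r_I]}}$, each ring $\widetilde{\Pi}^{?}_{R,\overline{A_{[s_I,r_I]}}}$ is the strict Banach quotient of $\widetilde{\Pi}^{?}_{R,A_{[s_I,r_I]}}$ by the closed submodule $\widetilde{\Pi}^{?}_R\,\widehat{\otimes}_E\,J$. A Schauder basis of $\overline{A_{[s_I,r_I]}}$ can be extracted from a subset of the images $\bar e_{\mathbf{a},\mathbf{b}}$ that survive in the quotient, with orthonormality preserved because the quotient map is strict and both sides are uniform Banach. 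Reapplying the coefficient-wise argument on the surviving indices then delivers the quotient statement identically to the perfectoid case.

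The main obstacle I anticipate is rigorously verifying the orthonormality and Schauder-basis compatibility across the three different product norms: in the perfectoid case this is guaranteed by the explicit monomial construction, but for the quotient $\overline{A_{[s_I,r_I]}}$ one must confirm that a basis can indeed be chosen compatibly with the closed ideal $J$ in such a way that the induced product norms on the three $\widetilde{\Pi}^{?}$-rings remain mutually compatible. Once this orthogonality is pinned down, the entire statement becomes a purely coefficient-wise application of the classical Kedlaya-Liu lemma, combined with standard open-mapping reasoning for strict quotients of uniform Banach rings to transfer the intersection identity across the quotient.
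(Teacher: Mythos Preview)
Your approach to the first identity is correct but genuinely different from the paper's. The paper argues directly: given $h$ in the intersection, it approximates $h$ by bounded elements $h_i$, writes each $h_i$ explicitly as a Teichm\"uller--monomial series $\sum \pi^n[\overline{h}_{i,n}]\cdot(\text{monomial})$, truncates to the integral part $g_i$ (terms with $n\geq 0$), and uses membership in $\widetilde{\Pi}^{\mathrm{int},s}$ to bound the discarded negative-$n$ terms, thereby showing $g_i\to h$ in the $\|\cdot\|_{\alpha^r}$-norm. Your Schauder-basis reduction is cleaner and more structural: it isolates the coefficient ring entirely and invokes \cite[Lemma 5.2.6]{KL1} once per basis vector. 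What the paper's hands-on approach buys is that it never needs to assert that the three completed tensor products share a common orthonormal decomposition; what yours buys is that, once that decomposition is granted, the argument is a one-line citation. (Minor point: you should take the basis over $E$ rather than over $E(\pi^{1/p^\infty})$, since the tensor products defining $\widetilde{\Pi}^{?}_{R,A}$ are over $E$; otherwise your coefficients land in $\widetilde{\Pi}^{?}_{R,E(\pi^{1/p^\infty})}$ and you have not quite reduced to the classical lemma.)

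For the second identity your proposal has a real gap, and you have correctly flagged it yourself: extracting a Schauder basis of $\overline{A_{[s_I,r_I]}}$ compatible with all three product norms is not automatic from strictness of the quotient, and you give no mechanism to produce one. The paper avoids this entirely. Its argument for the quotient is: given $\overline{h}$ in the intersection downstairs, use strictness to lift it to $h$ upstairs in $\widetilde{\Pi}^{\mathrm{int},s}_{R,A_{[s_I,r_I]}}\cap \widetilde{\Pi}^{[s,r]}_{R,A_{[s_I,r_I]}}$, run the first-identity approximation to produce $g_i\to h$ in $\widetilde{\Pi}^{\mathrm{int},r}_{R,A_{[s_I,r_I]}}$, and then push the estimate $\|\overline{h}-\overline{g}_i\|_{\alpha^r,\overline{A}}\leq \|h-g_i\|_{\alpha^r,A}$ back through the quotient. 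You should replace your quotient argument with this lift-and-descend strategy; it combines seamlessly with your Schauder-basis proof of the first identity and requires nothing beyond strictness.
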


\begin{proof}
For the first corresponding identity above we perform the corresponding argument as in \cite[Lemma 5.2.6]{KL2} and \cite{T2}. First consider now arbitrary element $h$ in the corresponding intersection on the left. Then we use a corresponding sequence of elements in the bounded Robba ring to approximate the corresponding element $h$ above, say $h_1,h_2,...$. This will mean that for any $k$ we could find some sufficiently large number $P_k$ such that the following holds for any $t\in [s,r]$ and for any $i\geq P_k$:
\begin{align}
\|.\|_{\alpha^t,A_{[s_I,r_I]}}(h-h_i)\leq p^{-k}.	
\end{align}
Then we are going to consider the corresponding decomposition of each element $h_i$ in the following way:
\begin{align}
h_i=\sum_{n\in \mathbb{Z}[1/p],i_1,...,i_k,j_1,...,j_k\in \mathbb{Z}[1/p]_{\geq 0}}\pi^n[\overline{h}_{i,n}](s_1/T_1)^{i_1}...(s_k/T_k)^{i_k}(T_1/r_1)^{j_1}...(T_k/r_k)^{j_k}.	
\end{align}
Then we consider the corresponding integral part of this decomposition namely we consider:
\begin{align}
g_i=\sum_{n\in \mathbb{Z}[1/p]_{\geq 0},i_1,...,i_k,j_1,...,j_k\in \mathbb{Z}[1/p]_{\geq 0}}\pi^n[\overline{h}_{i,n}](s_1/T_1)^{i_1}...(s_k/T_k)^{i_k}(T_1/r_1)^{j_1}...(T_k/r_k)^{j_k}.	
\end{align}
Now since we have that the elements involved are also living in the integral rings we have then that in the expression of $h_i$ we have:
\begin{align}
\|.\|_{\alpha^s,A_{[s_I,r_I]}}(\pi^n[\overline{h}_{i,n}](s_1/T_1)^{i_1}...(s_k/T_k)^{i_k}(T_1/r_1)^{j_1}...(T_k/r_k)^{j_k})\leq p^{-k},\forall n<0.	
\end{align}
Then we have:
\begin{align}
{\alpha}(\overline{h}_{i,n})\leq p^{-(k-n)/s}.	
\end{align}
Then we have:
\begin{displaymath}
\|.\|_{\alpha^r,A_{[s_I,r_I]}}(h-g_i)\leq p^{-n}p^{-(i-n)r/s}.	
\end{displaymath}
This will prove the corresponding results since we can now use the corresponding elements in the integral Robba rings to approximate.\\
For the corresponding second corresponding identity we consider the corresponding strictness of the quotient in the original formation of the rings. Now take any element $\overline{h}$ in the quotient, then lift this to $h$. For the first corresponding identity above we perform the corresponding argument as in \cite[Lemma 5.2.6]{KL2} and \cite{T2}. First consider now arbitrary element $h$ in the corresponding intersection on the left. Then we use a corresponding sequence of elements in the bounded Robba ring to approximate the corresponding element $h$ above, say $h_1,h_2,...$. This will mean that for any $k$ we could find some sufficiently large number $P_k$ such that the following holds for any $t\in [s,r]$ and for any $i\geq P_k$:
\begin{align}
\|.\|_{\alpha^t,A_{[s_I,r_I]}}(h-h_i)\leq p^{-k}.	
\end{align}
Then we are going to consider the corresponding decomposition of each element $h_i$ in the following way:
\begin{align}
h_i=\sum_{n\in \mathbb{Z}[1/p],i_1,...,i_k,j_1,...,j_k\in \mathbb{Z}[1/p]_{\geq 0}}\pi^n[\overline{h}_{i,n}](s_1/T_1)^{i_1}...(s_k/T_k)^{i_k}(T_1/r_1)^{j_1}...(T_k/r_k)^{j_k}.	
\end{align}
Then we consider the corresponding integral part of this decomposition namely we consider:
\begin{align}
g_i=\sum_{n\in \mathbb{Z}[1/p]_{\geq 0},i_1,...,i_k,j_1,...,j_k\in \mathbb{Z}[1/p]_{\geq 0}}\pi^n[\overline{h}_{i,n}](s_1/T_1)^{i_1}...(s_k/T_k)^{i_k}(T_1/r_1)^{j_1}...(T_k/r_k)^{j_k}.	
\end{align}
Now since we have that the elements involved are also living in the integral rings we have then that in the expression of $h_i$ we have:
\begin{align}
\|.\|_{\alpha^s,A_{[s_I,r_I]}}(\pi^n[\overline{h}_{i,n}](s_1/T_1)^{i_1}...(s_k/T_k)^{i_k}(T_1/r_1)^{j_1}...(T_k/r_k)^{j_k})\leq p^{-k},\forall n<0.	
\end{align}
Then we have:
\begin{align}
{\alpha}(\overline{h}_{i,n})\leq p^{-(k-n)/s}.	
\end{align}
Then we have:
\begin{displaymath}
\|.\|_{\alpha^r,A_{[s_I,r_I]}}(h-g_i)\leq p^{-n}p^{-(i-n)r/s}.	
\end{displaymath}
This will prove the corresponding results since we can now use the corresponding elements in the integral Robba rings to approximate. Then we have that:
\begin{align}
\|.\|_{\alpha^r,\overline{A_{[s_I,r_I]}}}(\overline{h}-\overline{g}_i)\leq p^{-n}p^{-(i-n)r/s},	
\end{align}
which shows the corresponding approximating process in the strict quotient.

\end{proof}

\begin{proposition} \mbox{\bf{(After Kedlaya-Liu \cite[Lemma 5.2.6]{KL1})}} 
Under the corresponding definitions and the corresponding notations in our context we have the following identification:
\begin{align}
\widetilde{\Pi}^{\mathrm{int},s}_{R,A'_{[s_I,r_I]}}\bigcap \widetilde{\Pi}^{[s,r]}_{R,A'_{[s_I,r_I]}}=	\widetilde{\Pi}^{\mathrm{int},r}_{R,A'_{[s_I,r_I]}}\\
\widetilde{\Pi}^{\mathrm{int},s}_{R,\overline{A_{[s_I,r_I]}}}\bigcap \widetilde{\Pi}^{[s,r]}_{R,\overline{A'_{[s_I,r_I]}}}=	\widetilde{\Pi}^{\mathrm{int},r}_{R,\overline{A'_{[s_I,r_I]}}}.
\end{align}
	
\end{proposition}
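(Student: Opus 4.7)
The plan is to mimic the argument given for the perfectoid coefficient $A_{[s_I,r_I]}$ in the previous proposition almost verbatim; the transition to the preperfectoid variant $A'_{[s_I,r_I]}$ merely removes the adjunction of $\pi^{1/p^\infty}$, while leaving intact the multiplicative expansion in the coordinates $(s_\alpha/T_\alpha)^{1/p^\infty}$ and $(T_\alpha/r_\alpha)^{1/p^\infty}$ together with the Teichm\"uller decomposition in $W_{\mathcal{O}_E}(R^+)$, so that the relevant norm estimates go through unchanged.

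For the first identity I would take an arbitrary element $h\in\widetilde{\Pi}^{\mathrm{int},s}_{R,A'_{[s_I,r_I]}}\cap\widetilde{\Pi}^{[s,r]}_{R,A'_{[s_I,r_I]}}$ and approximate it by a sequence $h_1,h_2,\ldots$ in the bounded Robba ring $\widetilde{\Pi}^{\mathrm{bd},r}_{R,A'_{[s_I,r_I]}}$, arranging that for every $k$ there is some $P_k$ with
\[
\|h-h_i\|_{\alpha^t,A'_{[s_I,r_I]}}\leq p^{-k}\quad\text{for all }t\in[s,r],\ i\geq P_k.
\]
Next I would expand each $h_i$ in the standard form $\sum_{n,i_\bullet,j_\bullet}\pi^n[\overline{h}_{i,n}]\prod_\alpha(s_\alpha/T_\alpha)^{i_\alpha}(T_\alpha/r_\alpha)^{j_\alpha}$, truncate to the integral summands $n\geq 0$ to produce $g_i$, and use the bound at radius $s$ to control the ``negative tail'': the estimate $\alpha(\overline{h}_{i,n})\leq p^{-(k-n)/s}$ for $n<0$ still holds, because it depends only on the tensor-product norm and the Teichm\"uller expansion, not on perfectoidness of the coefficient ring. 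A direct computation then yields $\|h-g_i\|_{\alpha^r,A'_{[s_I,r_I]}}\leq p^{-n}p^{-(i-n)r/s}\to 0$, which places $h$ in $\widetilde{\Pi}^{\mathrm{int},r}_{R,A'_{[s_I,r_I]}}$, as desired.

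For the second identity I would invoke strictness of the quotient $A'_{[s_I,r_I]}\twoheadrightarrow\overline{A'_{[s_I,r_I]}}$: any $\overline{h}$ in the left-hand intersection lifts to some $h\in\widetilde{\Pi}^{\mathrm{int},s}_{R,A'_{[s_I,r_I]}}\cap\widetilde{\Pi}^{[s,r]}_{R,A'_{[s_I,r_I]}}$, the first identity provides integral approximations $g_i\to h$ in the norm $\|.\|_{\alpha^r,A'_{[s_I,r_I]}}$, and strictness of the quotient norm then transports this to $\overline{g}_i\to\overline{h}$ in $\|.\|_{\alpha^r,\overline{A'_{[s_I,r_I]}}}$, exhibiting $\overline{h}$ as an element of $\widetilde{\Pi}^{\mathrm{int},r}_{R,\overline{A'_{[s_I,r_I]}}}$. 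The main — and in fact essentially only — obstacle is the bookkeeping around this descent: one must check that the tensor-product norms descend strictly to $\widetilde{\Pi}^{[s,r]}_{R,\overline{A'_{[s_I,r_I]}}}$ and $\widetilde{\Pi}^{\mathrm{int},r}_{R,\overline{A'_{[s_I,r_I]}}}$, so that Cauchy sequences really transfer across the quotient, and that a simultaneous lift of $\overline{h}$ into both members of the left-hand intersection can be arranged. Once these compatibilities are verified, all the norm estimates copy verbatim from the perfectoid case proved above.
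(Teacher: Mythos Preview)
Your proposal is correct and follows essentially the same approach as the paper's own proof: approximate by bounded elements, truncate the Teichm\"uller expansion to nonnegative $\pi$-exponents, and use the $\alpha^s$-bound on the negative tail to control the $\alpha^r$-norm, then handle the quotient case via strictness. The only cosmetic difference is that in the preperfectoid setting the paper records the $\pi$-exponent $n$ as ranging over $\mathbb{Z}$ rather than $\mathbb{Z}[1/p]$ (reflecting the absence of $\pi^{1/p^\infty}$), which you implicitly note but do not spell out; your added caveats about verifying strict descent of the tensor-product norms and arranging a simultaneous lift are points the paper simply takes for granted.
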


\begin{proof}
For the first corresponding identity above we perform the corresponding argument as in \cite[Lemma 5.2.6]{KL2} and \cite{T2}. First consider now arbitrary element $h$ in the corresponding intersection on the left. Then we use a corresponding sequence of elements in the bounded Robba ring to approximate the corresponding element $h$ above, say $h_1,h_2,...$. This will mean that for any $k$ we could find some sufficiently large number $P_k$ such that the following holds for any $t\in [s,r]$ and for any $i\geq P_k$:
\begin{align}
\|.\|_{\alpha^t,A'_{[s_I,r_I]}}(h-h_i)\leq p^{-k}.	
\end{align}
Then we are going to consider the corresponding decomposition of each element $h_i$ in the following way:
\begin{align}
h_i=\sum_{n\in \mathbb{Z},i_1,...,i_k,j_1,...,j_k\in \mathbb{Z}[1/p]_{\geq 0}}\pi^n[\overline{h}_{i,n}](s_1/T_1)^{i_1}...(s_k/T_k)^{i_k}(T_1/r_1)^{j_1}...(T_k/r_k)^{j_k}.	
\end{align}
Then we consider the corresponding integral part of this decomposition namely we consider:
\begin{align}
g_i=\sum_{n\in \mathbb{Z}_{\geq 0},i_1,...,i_k,j_1,...,j_k\in \mathbb{Z}[1/p]_{\geq 0}}\pi^n[\overline{h}_{i,n}](s_1/T_1)^{i_1}...(s_k/T_k)^{i_k}(T_1/r_1)^{j_1}...(T_k/r_k)^{j_k}.	
\end{align}
Now since we have that the elements involved are also living in the integral rings we have then that in the expression of $h_i$ we have:
\begin{align}
\|.\|_{\alpha^s,A'_{[s_I,r_I]}}(\pi^n[\overline{h}_{i,n}](s_1/T_1)^{i_1}...(s_k/T_k)^{i_k}(T_1/r_1)^{j_1}...(T_k/r_k)^{j_k})\leq p^{-k},\forall n<0.	
\end{align}
Then we have:
\begin{align}
{\alpha}(\overline{h}_{i,n})\leq p^{-(k-n)/s}.	
\end{align}
Then we have:
\begin{displaymath}
\|.\|_{\alpha^r,A'_{[s_I,r_I]}}(h-g_i)\leq p^{-n}p^{-(i-n)r/s}.	
\end{displaymath}
This will prove the corresponding results since we can now use the corresponding elements in the integral Robba rings to approximate.\\
For the corresponding second corresponding identity we consider the corresponding strictness of the quotient in the original formation of the rings. Now take any element $\overline{h}$ in the quotient, then lift this to $h$. For the first corresponding identity above we perform the corresponding argument as in \cite[Lemma 5.2.6]{KL2} and \cite{T2}. First consider now arbitrary element $h$ in the corresponding intersection on the left. Then we use a corresponding sequence of elements in the bounded Robba ring to approximate the corresponding element $h$ above, say $h_1,h_2,...$. This will mean that for any $k$ we could find some sufficiently large number $P_k$ such that the following holds for any $t\in [s,r]$ and for any $i\geq P_k$:
\begin{align}
\|.\|_{\alpha^t,A'_{[s_I,r_I]}}(h-h_i)\leq p^{-k}.	
\end{align}
Then we are going to consider the corresponding decomposition of each element $h_i$ in the following way:
\begin{align}
h_i=\sum_{n\in \mathbb{Z},i_1,...,i_k,j_1,...,j_k\in \mathbb{Z}[1/p]_{\geq 0}}\pi^n[\overline{h}_{i,n}](s_1/T_1)^{i_1}...(s_k/T_k)^{i_k}(T_1/r_1)^{j_1}...(T_k/r_k)^{j_k}.	
\end{align}
Then we consider the corresponding integral part of this decomposition namely we consider:
\begin{align}
g_i=\sum_{n\in \mathbb{Z}_{\geq 0},i_1,...,i_k,j_1,...,j_k\in \mathbb{Z}[1/p]_{\geq 0}}\pi^n[\overline{h}_{i,n}](s_1/T_1)^{i_1}...(s_k/T_k)^{i_k}(T_1/r_1)^{j_1}...(T_k/r_k)^{j_k}.	
\end{align}
Now since we have that the elements involved are also living in the integral rings we have then that in the expression of $h_i$ we have:
\begin{align}
\|.\|_{\alpha^s,A'_{[s_I,r_I]}}(\pi^n[\overline{h}_{i,n}](s_1/T_1)^{i_1}...(s_k/T_k)^{i_k}(T_1/r_1)^{j_1}...(T_k/r_k)^{j_k})\leq p^{-k},\forall n<0.	
\end{align}
Then we have:
\begin{align}
{\alpha}(\overline{h}_{i,n})\leq p^{-(k-n)/s}.	
\end{align}
Then we have:
\begin{displaymath}
\|.\|_{\alpha^r,A'_{[s_I,r_I]}}(h-g_i)\leq p^{-n}p^{-(i-n)r/s}.	
\end{displaymath}
This will prove the corresponding results since we can now use the corresponding elements in the integral Robba rings to approximate. Then we have that:
\begin{align}
\|.\|_{\alpha^r,\overline{A'_{[s_I,r_I]}}}(\overline{h}-\overline{g}_i)\leq p^{-n}p^{-(i-n)r/s},	
\end{align}
which shows the corresponding approximating process in the strict quotient.

\end{proof}

\begin{proposition} \mbox{\bf{(After Kedlaya-Liu \cite[Lemma 5.2.7]{KL1})}} 
We consider the Robba ring over some closed interval $\widetilde{\Pi}^{[s,r]}_{R,A_{[s_I,r_I]}}$ we have the following decomposition for each element $x$ in this ring into the following way:
\begin{displaymath}
x=y+z	
\end{displaymath}
where $y\in \pi^n \widetilde{\Pi}^{\mathrm{int},r}_{R,A_{[s_I,r_I]}}$ and $z\in \widetilde{\Pi}^{[s,r']}_{R,A_{[s_I,r_I]}}$ for each $r'\geq r$, and here $n>0$ lives in $\mathbb{Z}[1/p]$. With this decomposition we have the following estimate:
\begin{align}
\|.\|_{\alpha^t,A_{[s_I,r_I]}}(z)\leq p^{(1-n)(1-t/r)}\|.\|_{\alpha^r,A_{[s_I,r_I]}}(x)^{t/r}.	
\end{align}
And we consider the Robba ring over some closed interval $\widetilde{\Pi}^{[s,r]}_{R,\overline{A_{[s_I,r_I]}}}$ we have the following decomposition for each element $x$ in this ring into the following way:
\begin{displaymath}
x=y+z	
\end{displaymath}
where $y\in p^n \widetilde{\Pi}^{\mathrm{int},r}_{R,\overline{A_{[s_I,r_I]}}}$ and $z\in \widetilde{\Pi}^{[s,r']}_{R,\overline{A_{[s_I,r_I]}}}$ for each $r'\geq r$, and here $n>0$ lives in $\mathbb{Z}[1/p]$. With this decomposition we have the following estimate:
\begin{align}
\|.\|_{\alpha^t,\overline{A_{[s_I,r_I]}}}(z)\leq p^{(1-n)(1-t/r)}\|.\|_{\alpha^r,\overline{A_{[s_I,r_I]}}}(x)^{t/r}.	
\end{align}
	
\end{proposition}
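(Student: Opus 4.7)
The plan follows the decomposition strategy of \cite[Lemma 5.2.7]{KL1}, adapted to the perfectoid coefficient ring $A_{[s_I,r_I]}$ and, in the second statement, to its strict quotient. First approximate $x \in \widetilde{\Pi}^{[s,r]}_{R,A_{[s_I,r_I]}}$ by a sequence $h_1,h_2,\ldots$ in the bounded Robba ring $\widetilde{\Pi}^{\mathrm{bd},r}_{R,A_{[s_I,r_I]}}$ converging to $x$ uniformly in $\|\cdot\|_{\alpha^t,A_{[s_I,r_I]}}$ for $t \in [s,r]$ (the same device that was used to prove the two preceding propositions). Each $h_i$ admits a finite-support Teichm\"uller expansion
\[
h_i = \sum_{m,\vec\iota,\vec\jmath}\pi^m [\overline{h}_{i,m,\vec\iota,\vec\jmath}]\,(s_1/T_1)^{i_1}\cdots(T_k/r_k)^{j_k}.
\]
Perform a term-by-term split sending a given term to $y_i$ if its exponent $m$ and Teichm\"uller lift together satisfy an integrality threshold at level $n$ (after, if necessary, further decomposing $[\overline{h}_{i,m,\vec\iota,\vec\jmath}]$ along a $W_{\mathcal{O}_E}(R^+)$-additive expansion in the spirit of \cite[proof of Lemma 5.2.7]{KL1}), and sending it to $z_i$ otherwise. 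By design $y_i \in \pi^n \widetilde{\Pi}^{\mathrm{int},r}_{R,A_{[s_I,r_I]}}$.

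The required estimate for $z_i$ then comes from the elementary two-point interpolation
\[
p^{-m}\alpha(\overline{h}_{i,m,\vec\iota,\vec\jmath})^t = \bigl(p^{-m}\alpha(\overline{h}_{i,m,\vec\iota,\vec\jmath})^r\bigr)^{t/r}\cdot p^{-m(1-t/r)},
\]
combined with $p^{-m}\alpha(\overline{h}_{i,m,\vec\iota,\vec\jmath})^r \leq \|h_i\|_{\alpha^r,A_{[s_I,r_I]}}$ and the exponent threshold $m \geq n-1$ enforced for every term surviving in $z_i$. This yields the bound $\|z_i\|_{\alpha^t,A_{[s_I,r_I]}} \leq p^{(1-n)(1-t/r)}\|h_i\|_{\alpha^r,A_{[s_I,r_I]}}^{t/r}$ uniformly in $i$, and the same interpolation carried out at $t \geq r$ shows that each $z_i$ lives in $\widetilde{\Pi}^{[s,r']}_{R,A_{[s_I,r_I]}}$ for every $r' \geq r$. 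Passing to the limit $i \to \infty$ and invoking uniformity produces $x = y + z$ with the desired membership and norm inequality.

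For the statement over the strict quotient $\overline{A_{[s_I,r_I]}}$, lift a given $\overline{x}$ to $x \in \widetilde{\Pi}^{[s,r]}_{R,A_{[s_I,r_I]}}$ with $\|x\|_{\alpha^r,A_{[s_I,r_I]}}$ controlled by a constant multiple of $\|\overline{x}\|_{\alpha^r,\overline{A_{[s_I,r_I]}}}$ (using strictness of the surjection $A_{[s_I,r_I]} \twoheadrightarrow \overline{A_{[s_I,r_I]}}$), decompose $x = y + z$ as above, and project the decomposition to the quotient. Continuity of the quotient map with respect to each $\|\cdot\|_{\alpha^t,\bullet}$ transports both the membership and the interpolated estimate to the quotient, exactly as in the second half of the preceding two propositions.

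The main obstacle is the bookkeeping of Teichm\"uller terms whose lifts $\overline{h}_{i,m,\vec\iota,\vec\jmath}$ have $\alpha > 1$: such terms cannot be placed into $\pi^n \widetilde{\Pi}^{\mathrm{int},r}_{R,A_{[s_I,r_I]}}$ verbatim, so one must further split each of them into an $R^+$-piece absorbed into $y_i$ and a complementary residual piece reassigned to $z_i$, and this must be done uniformly in $i$ so that the $\|\cdot\|_{\alpha^t,A_{[s_I,r_I]}}$-estimates on $z_i$ remain Cauchy as $i \to \infty$. Once the split is organized uniformly, the limit argument and the descent through the strict quotient are both routine.
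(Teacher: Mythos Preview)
Your approach is essentially the same as the paper's: approximate by bounded Robba elements, split each approximant along the Teichm\"uller expansion according to whether the $\pi$-exponent is $\geq n$ or $<n$, verify the interpolation estimate termwise, and pass to the limit; for the quotient, lift via strictness and project back. The paper organizes the approximation incrementally (writing $x$ as a convergent sum $\sum_i r_i$ of bounded elements and splitting each increment) rather than as a single convergent sequence $h_i \to x$, but this is cosmetic.

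One point worth noting: the ``main obstacle'' you flag, concerning Teichm\"uller terms with $\alpha(\overline{h}_{i,m,\vec\iota,\vec\jmath}) > 1$, is not actually an obstacle here. The integral Robba ring $\widetilde{\Pi}^{\mathrm{int},r}_{R}$ is defined in this paper (and in \cite{KL1}) as the completion of $W_{\mathcal{O}_E}(R^+)[[x]:x\in R]$, so Teichm\"uller lifts of arbitrary elements of $R$ are already allowed; there is no need for a secondary split into an $R^+$-piece and a residual. The paper's proof accordingly takes the naive split $y = \sum_{l\geq n}(\cdots)$, $z = \sum_{l<n}(\cdots)$ without further refinement, and the estimate on $z$ follows directly from the two-point interpolation you wrote down.
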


\begin{proof}
For the first part, we first consider any element $r$ in the bounded Robba ring. Each such element admits the corresponding expansion by the linear combination of the Teichm\"uller elements, each term takes the corresponding form as $\pi^l[r_{l}]$ summing up from some negative integer. Then the corresponding decomposition just reads:
\begin{align}
y=\sum_{l\geq n,i_1,...,i_k,j_1,...,j_k\in \mathbb{Z}[1/p]_{\geq 0}}\pi^l[\overline{r}_{l}](s_1/T_1)^{i_1}...(s_k/T_k)^{i_k}(T_1/r_1)^{j_1}...(T_k/r_k)^{j_k},\\
z=r-y.	
\end{align}
Then in this case we have the corresponding desired estimate:
\begin{align}
\|.\|_{\alpha^t,A_{[s_I,r_I]}}(\pi^l[\overline{r}_{l}](s_1/T_1)^{i_1}...(s_k/T_k)^{i_k}(T_1/r_1)^{j_1}...(T_k/r_k)^{j_k})\\
\leq p^{(1-n)(1-t/r)}\|.\|_{\alpha^r,A_{[s_I,r_I]}}(\pi^l[\overline{r}_{l}](s_1/T_1)^{i_1}...(s_k/T_k)^{i_k}(T_1/r_1)^{j_1}...(T_k/r_k)^{j_k})^{t/r}.
\end{align}
 Then we choose a corresponding sequence of the corresponding elements in the bounded Robba ring to approximate any chosen arbitrary element in $\widetilde{\Pi}^{[s,r]}_{R,A_{[s_I,r_I]}}$ such that we have:
 \begin{displaymath}
 \|.\|_{\alpha^t,A_{[s_I,r_I]}}(r-r_0-...-r_i)\leq p^{-1-i}  \|.\|_{\alpha^t,A_{[s_I,r_I]}}(r),\forall t\in [s,r].	
 \end{displaymath}
Then apply the corresponding construction in the previous situation to each $r_i$ we have that there is a corresponding decomposition:
\begin{displaymath}
r_i=y_i+z_i	
\end{displaymath}
with the corresponding desired extracted sum $\sum_{i}y_i$ which converges to the desired element $y$. While for $z_i$ we consider the corresponding estimate as in the following:
\begin{align}
 \|.\|_{\alpha^t,A_{[s_I,r_I]}}(z_i) &\leq  p^{(1-n)(1-t/r)}\|.\|_{\alpha^r,A_{[s_I,r_I]}}(r_i)^{t/r}\\
 &\leq p^{-i}p^{(1-n)(1-t/r)}\|.\|_{\alpha^r,A_{[s_I,r_I]}}(r_i)^{t/r},	
\end{align}
which shows the corresponding sum $\sum_i z_i$ converges to desired element $z$ in the corresponding union of all the Robba rings $\widetilde{\Pi}^{[s,r']}_{R,A_{[s_I,r_I]}}$ for all $r'\geq r$. Then for the second statement we just perform as before to look at the corresponding lifting of elements through the corresponding strict quotient morphism.
 
\end{proof}

\begin{proposition} \mbox{\bf{(After Kedlaya-Liu \cite[Lemma 5.2.7]{KL1})}} 
We consider the Robba ring over some closed interval $\widetilde{\Pi}^{[s,r]}_{R,A'_{[s_I,r_I]}}$ we have the following decomposition for each element $x$ in this ring into the following way:
\begin{displaymath}
x=y+z	
\end{displaymath}
where $y\in \pi^n \widetilde{\Pi}^{\mathrm{int},r}_{R,A'_{[s_I,r_I]}}$ and $z\in \widetilde{\Pi}^{[s,r']}_{R,A'_{[s_I,r_I]}}$ for each $r'\geq r$, and here $n>0$ is some arbitrary given integer. With this decomposition we have the following estimate:
\begin{align}
\|.\|_{\alpha^t,A'_{[s_I,r_I]}}(z)\leq p^{(1-n)(1-t/r)}\|.\|_{\alpha^r,A'_{[s_I,r_I]}}(x)^{t/r}.	
\end{align}
And we consider the Robba ring over some closed interval $\widetilde{\Pi}^{[s,r]}_{R,\overline{A'_{[s_I,r_I]}}}$ we have the following decomposition for each element $x$ in this ring into the following way:
\begin{displaymath}
x=y+z	
\end{displaymath}
where $y\in p^n \widetilde{\Pi}^{\mathrm{int},r}_{R,\overline{A'_{[s_I,r_I]}}}$ and $z\in \widetilde{\Pi}^{[s,r']}_{R,\overline{A'_{[s_I,r_I]}}}$ for each $r'\geq r$, and here $n>0$ lives in $\mathbb{Z}$. With this decomposition we have the following estimate:
\begin{align}
\|.\|_{\alpha^t,\overline{A'_{[s_I,r_I]}}}(z)\leq p^{(1-n)(1-t/r)}\|.\|_{\alpha^r,\overline{A'_{[s_I,r_I]}}}(x)^{t/r}.	
\end{align}

\end{proposition}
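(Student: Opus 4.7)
The plan is to mirror exactly the argument given for the perfectoid-coefficient version (the immediately preceding proposition), with the only real change being bookkeeping: in the preperfectoid setting the exponent $n$ on $\pi$ runs over $\mathbb{Z}$ instead of $\mathbb{Z}[1/p]$, while the multi-indices $i_\bullet, j_\bullet$ coming from the coefficients $(s_\alpha/T_\alpha)^{1/p^\infty}$ and $(T_\alpha/r_\alpha)^{1/p^\infty}$ still range in $\mathbb{Z}[1/p]_{\geq 0}$.

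First I would handle an element $r$ of the bounded Robba ring $\widetilde{\Pi}^{\mathrm{bd},r}_{R,A'_{[s_I,r_I]}}$. Such an $r$ admits a Teichm\"uller-type expansion
\begin{align}
r=\sum_{l\in\mathbb{Z},\, i_\bullet,j_\bullet\in\mathbb{Z}[1/p]_{\geq 0}}\pi^l[\overline{r}_l]\,(s_1/T_1)^{i_1}\cdots(s_k/T_k)^{i_k}(T_1/r_1)^{j_1}\cdots(T_k/r_k)^{j_k},
\end{align}
and I split $r=y+z$ by declaring $y$ to be the sub-sum over $l\geq n$ and $z=r-y$. Each monomial $\tau_{l,i,j}=\pi^l[\overline{r}_l]\prod(s_\alpha/T_\alpha)^{i_\alpha}\prod(T_\alpha/r_\alpha)^{j_\alpha}$ satisfies the elementary log-convexity inequality
\begin{align}
\|.\|_{\alpha^t,A'_{[s_I,r_I]}}(\tau_{l,i,j})\leq p^{(1-n)(1-t/r)}\,\|.\|_{\alpha^r,A'_{[s_I,r_I]}}(\tau_{l,i,j})^{t/r}
\end{align}
for $l<n$ and $t\in[s,r]$, which gives the desired estimate on the $\pi$-truncated part $z$.

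Next, I upgrade to an arbitrary $x\in\widetilde{\Pi}^{[s,r]}_{R,A'_{[s_I,r_I]}}$ by density of the bounded Robba ring. I pick a telescoping decomposition $x=\sum_i r_i$ with
\begin{displaymath}
\|.\|_{\alpha^t,A'_{[s_I,r_I]}}(x-r_0-\cdots-r_i)\leq p^{-1-i}\|.\|_{\alpha^t,A'_{[s_I,r_I]}}(x),\quad t\in[s,r],
\end{displaymath}
apply the bounded-level splitting $r_i=y_i+z_i$ termwise, and set $y=\sum_i y_i$, $z=\sum_i z_i$. Convergence of $\sum y_i$ is automatic in $\pi^n\widetilde{\Pi}^{\mathrm{int},r}_{R,A'_{[s_I,r_I]}}$; convergence of $\sum z_i$ in $\widetilde{\Pi}^{[s,r']}_{R,A'_{[s_I,r_I]}}$ for any $r'\geq r$ and the claimed estimate on $z$ follow by combining the monomial bound with the geometric factor $p^{-i}$ coming from the telescoping.

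For the strict quotient statement, I would lift any $\overline{x}\in\widetilde{\Pi}^{[s,r]}_{R,\overline{A'_{[s_I,r_I]}}}$ to an element $x$ of $\widetilde{\Pi}^{[s,r]}_{R,A'_{[s_I,r_I]}}$ of essentially the same norm (using strictness of the quotient $A'_{[s_I,r_I]}\twoheadrightarrow\overline{A'_{[s_I,r_I]}}$, so that the induced seminorms on the Robba rings inherit strict-quotient behavior), apply the decomposition upstairs, and push down to obtain $\overline{x}=\overline{y}+\overline{z}$ with the same estimate. The only step requiring care, and the one I would flag as the main (mild) obstacle, is justifying that the projection of $\pi^n\widetilde{\Pi}^{\mathrm{int},r}_{R,A'_{[s_I,r_I]}}$ lands in $p^n\widetilde{\Pi}^{\mathrm{int},r}_{R,\overline{A'_{[s_I,r_I]}}}$ and that the residual norm estimate survives the quotient uniformly in $t\in[s,r]$; this is where the integer (rather than $\mathbb{Z}[1/p]$) exponent $n$ enters, and it is handled by noting that the strict quotient preserves the Gauss-norm filtration by $\pi$-powers on the integral Robba ring, so the exponent transfers verbatim.
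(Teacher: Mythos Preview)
Your proposal is correct and follows essentially the same route as the paper: first handle bounded Robba ring elements by truncating the Teichm\"uller expansion at level $n$, verify the log-convexity estimate on monomials, then pass to general $x$ by a telescoping approximation and sum the resulting $y_i$ and $z_i$; for the quotient case, lift through the strict surjection, decompose upstairs, and push down. The paper's proof is in fact less explicit than yours about the quotient step (it simply says ``we just perform as before to look at the corresponding lifting of elements through the corresponding strict quotient morphism''), so your flagged ``mild obstacle'' is already more than the paper addresses.
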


\begin{proof}
For the first part, we first consider any element $r$ in the bounded Robba ring. Each such element admits the corresponding expansion by the linear combination of the Teichm\"uller elements, each term takes the corresponding form as $\pi^l[r_{l}]$ summing up from some negative integer. Then the corresponding decomposition just reads:
\begin{align}
y=\sum_{l\geq n,i_1,...,i_k,j_1,...,j_k\in \mathbb{Z}[1/p]_{\geq 0}}\pi^l[\overline{r}_{l}](s_1/T_1)^{i_1}...(s_k/T_k)^{i_k}(T_1/r_1)^{j_1}...(T_k/r_k)^{j_k},\\
z=r-y.	
\end{align}
Then in this case we have the corresponding desired estimate:
\begin{align}
\|.\|_{\alpha^t,A'_{[s_I,r_I]}}(\pi^l[\overline{r}_{l}](s_1/T_1)^{i_1}...(s_k/T_k)^{i_k}(T_1/r_1)^{j_1}...(T_k/r_k)^{j_k})\\
\leq p^{(1-n)(1-t/r)}\|.\|_{\alpha^r,A'_{[s_I,r_I]}}(\pi^l[\overline{r}_{l}](s_1/T_1)^{i_1}...(s_k/T_k)^{i_k}(T_1/r_1)^{j_1}...(T_k/r_k)^{j_k})^{t/r}.
\end{align}
 Then we choose a corresponding sequence of the corresponding elements in the bounded Robba ring to approximate any chosen arbitrary element in $\widetilde{\Pi}^{[s,r]}_{R,A'_{[s_I,r_I]}}$ such that we have:
 \begin{displaymath}
 \|.\|_{\alpha^t,A'_{[s_I,r_I]}}(r-r_0-...-r_i)\leq p^{-1-i}  \|.\|_{\alpha^t,A'_{[s_I,r_I]}}(r),\forall t\in [s,r].	
 \end{displaymath}
Then apply the corresponding construction in the previous situation to each $r_i$ we have that there is a corresponding decomposition:
\begin{displaymath}
r_i=y_i+z_i	
\end{displaymath}
with the corresponding desired extracted sum $\sum_{i}y_i$ which converges to the desired element $y$. While for $z_i$ we consider the corresponding estimate as in the following:
\begin{align}
 \|.\|_{\alpha^t,A'_{[s_I,r_I]}}(z_i) &\leq  p^{(1-n)(1-t/r)}\|.\|_{\alpha^r,A'_{[s_I,r_I]}}(r_i)^{t/r}\\
 &\leq p^{-i}p^{(1-n)(1-t/r)}\|.\|_{\alpha^r,A'_{[s_I,r_I]}}(r_i)^{t/r},	
\end{align}
which shows the corresponding sum $\sum_i z_i$ converges to desired element $z$ in the corresponding union of all the Robba rings $\widetilde{\Pi}^{[s,r']}_{R,A'_{[s_I,r_I]}}$ for all $r'\geq r$. Then for the second statement we just perform as before to look at the corresponding lifting of elements through the corresponding strict quotient morphism.	
\end{proof}

\begin{proposition} \mbox{\bf{(After Kedlaya-Liu \cite[Lemma 5.2.10]{KL1})}}
We could have the following result in our new context around the corresponding period rings with respect to two intervals $[s_1,r_1],[s_2,r_2]$ such that we have:
\begin{align}
0<s_1\leq s_2\leq r_1 \leq r_2.	
\end{align}
To be more precise in our current situation, we have:
\begin{align}
\widetilde{\Pi}^{[s_1,r_1]}_{R,A_{[s_I,r_I]}}\bigcap \widetilde{\Pi}^{[s_2,r_2]}_{R,A_{[s_I,r_I]}}=	\widetilde{\Pi}^{\mathrm{int},r}_{R,A_{[s_1,r_2]}}	
\end{align}
and 
\begin{align}
\widetilde{\Pi}^{[s_1,r_1]}_{R,\overline{A_{[s_I,r_I]}}}\bigcap \widetilde{\Pi}^{[s_2,r_2]}_{R,\overline{A_{[s_I,r_I]}}}=	\widetilde{\Pi}^{\mathrm{int},r}_{R,\overline{A_{[s_I,r_I]}}}.	
\end{align}

\end{proposition}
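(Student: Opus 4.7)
The target identity (interpreted as the natural analogue of \cite[Lemma 5.2.10]{KL1} in the present setting) asserts that an element which is analytic on both closed annuli $[s_1,r_1]$ and $[s_2,r_2]$ is automatically analytic on the union $[s_1,r_2]$. The strategy is to reduce this to the two preceding propositions: the intersection identity (after \cite[Lemma 5.2.6]{KL1}) and the decomposition lemma (after \cite[Lemma 5.2.7]{KL1}), exactly mirroring the Kedlaya--Liu argument but bookkeeping the additional multi-index variables $(s_\beta/T_\beta)^{1/p^\infty}$ and $(T_\beta/r_\beta)^{1/p^\infty}$ that enter the definition of $A_{[s_I,r_I]}$.

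First I would dispatch the easy inclusion $\widetilde{\Pi}^{[s_1,r_2]}_{R,A_{[s_I,r_I]}}\subseteq \widetilde{\Pi}^{[s_1,r_1]}_{R,A_{[s_I,r_I]}}\cap \widetilde{\Pi}^{[s_2,r_2]}_{R,A_{[s_I,r_I]}}$ by noting that the Gauss norms $\|\cdot\|_{\alpha^t,A_{[s_I,r_I]}}$ are log-convex in $t$ on $[s_1,r_2]$, so being bounded at the endpoints $s_1$ and $r_2$ dominates each of the four endpoint norms $s_1,r_1,s_2,r_2$. This already gives continuity of the inclusion map and lets us restrict to verifying set-theoretic containment in the reverse direction.

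For the harder inclusion, take $x$ in the intersection on the left. Approximate $x$ by a sequence $x_0,x_1,\ldots$ of bounded Robba elements so that $\|x-x_0-\cdots-x_i\|_{\alpha^t,A_{[s_I,r_I]}}\leq p^{-1-i}$ uniformly for $t\in[s_1,r_1]$, and similarly after finitely many correction terms for $t\in[s_2,r_2]$. Then apply the decomposition proposition to each $x_i$, writing $x_i = y_i + z_i$ where $y_i \in \pi^{n_i}\widetilde{\Pi}^{\mathrm{int},r_1}_{R,A_{[s_I,r_I]}}$ for $n_i\to\infty$ and $z_i \in \widetilde{\Pi}^{[s_1,r_2]}_{R,A_{[s_I,r_I]}}$, with the controlling estimate
\begin{align*}
\|z_i\|_{\alpha^t,A_{[s_I,r_I]}} \leq p^{(1-n_i)(1-t/r_1)}\|x_i\|_{\alpha^{r_1},A_{[s_I,r_I]}}^{t/r_1}.
\end{align*}
The sum $\sum_i y_i$ converges in the integral ring $\widetilde{\Pi}^{\mathrm{int},r_1}_{R,A_{[s_I,r_I]}}$, which by the previous intersection proposition equals $\widetilde{\Pi}^{\mathrm{int},s_1}\cap \widetilde{\Pi}^{[s_1,r_1]}$, hence extends to the larger interval $[s_1,r_2]$. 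The sum $\sum_i z_i$ converges to an element of $\widetilde{\Pi}^{[s_1,r_2]}_{R,A_{[s_I,r_I]}}$ by the displayed estimate (the exponent $t/r_1$ keeps the bound uniform in $t$ after multiplying by $p^{-i}$). Adding both pieces exhibits $x$ as an element of $\widetilde{\Pi}^{[s_1,r_2]}_{R,A_{[s_I,r_I]}}$.

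For the second identity involving the strict quotient $\overline{A_{[s_I,r_I]}}$, I would proceed exactly as in the proofs of the two preceding propositions: lift an arbitrary $\overline{x}$ in the intersection to some $x \in \widetilde{\Pi}^{[s_1,r_1]}_{R,A_{[s_I,r_I]}}$ of approximately the same norm (using strictness), perform the decomposition $x = y + z$ in the perfectoid level, then descend $y,z$ through the quotient map to obtain the corresponding decomposition of $\overline{x}$; the norm estimates pass through the strict quotient by construction. The main obstacle I anticipate is purely bookkeeping, namely verifying that the decomposition proposition holds for the multi-indexed expansions in $(s_\beta/T_\beta)^{1/p^\infty}$ and $(T_\beta/r_\beta)^{1/p^\infty}$ with the correct uniform estimates; this is not conceptually new relative to the one-variable Kedlaya--Liu argument but does require that the norm $\|\cdot\|_{\alpha^t,A_{[s_I,r_I]}}$ genuinely behaves as a tensor product norm on each monomial, which is built into the definitions above.
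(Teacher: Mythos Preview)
The paper's own proof is simply a reference to \cite[Lemma 5.2.10]{KL1}, so your strategy of mirroring that argument via the two preceding propositions is exactly what is intended. However, there is a gap in your handling of the integral part. You write that $\sum_i y_i \in \widetilde{\Pi}^{\mathrm{int},r_1}_{R,A_{[s_I,r_I]}}$, which ``by the previous intersection proposition equals $\widetilde{\Pi}^{\mathrm{int},s_1}\cap \widetilde{\Pi}^{[s_1,r_1]}$, hence extends to the larger interval $[s_1,r_2]$.'' But that identity cannot help you reach $r_2$: an element of $\widetilde{\Pi}^{\mathrm{int},r_1}$ need not lie in $\widetilde{\Pi}^{[s_1,r_2]}$ when $r_2>r_1$. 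As written, your hard-inclusion argument never actually uses the hypothesis $x\in\widetilde{\Pi}^{[s_2,r_2]}$ beyond the throwaway approximation sentence.

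The correct step (as in Kedlaya--Liu) is this: set $y:=x-z$. Since $z\in\widetilde{\Pi}^{[s_1,r']}$ for all $r'\geq r_1$, in particular $z\in\widetilde{\Pi}^{[s_2,r_2]}$; combined with $x\in\widetilde{\Pi}^{[s_2,r_2]}$ this gives $y\in\widetilde{\Pi}^{[s_2,r_2]}$. On the other hand $y\in\widetilde{\Pi}^{\mathrm{int},r_1}\subseteq\widetilde{\Pi}^{\mathrm{int},s_2}$ because $s_2\leq r_1$. Now apply the intersection proposition with the interval $[s_2,r_2]$ to conclude $y\in\widetilde{\Pi}^{\mathrm{int},s_2}\cap\widetilde{\Pi}^{[s_2,r_2]}=\widetilde{\Pi}^{\mathrm{int},r_2}\subseteq\widetilde{\Pi}^{[s_1,r_2]}$. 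With this correction the rest of your plan, including the strict-quotient argument for $\overline{A_{[s_I,r_I]}}$, goes through.
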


\begin{proof}
See the proof of \cite[Lemma 5.2.10]{KL1}.	
\end{proof}

\begin{proposition} \mbox{\bf{(After Kedlaya-Liu \cite[Lemma 5.2.10]{KL1})}}
We could have the following result in our new context around the corresponding period rings with respect to two intervals $[s_1,r_1],[s_2,r_2]$ such that we have:
\begin{align}
0<s_1\leq s_2\leq r_1 \leq r_2.	
\end{align}
To be more precise in our current situation, we have:
\begin{align}
\widetilde{\Pi}^{[s_1,r_1]}_{R,A'_{[s_I,r_I]}}\bigcap \widetilde{\Pi}^{[s_2,r_2]}_{R,A'_{[s_I,r_I]}}=	\widetilde{\Pi}^{\mathrm{int},r}_{R,A'_{[s_1,r_2]}}	
\end{align}
and 
\begin{align}
\widetilde{\Pi}^{[s_1,r_1]}_{R,\overline{A'_{[s_I,r_I]}}}\bigcap \widetilde{\Pi}^{[s_2,r_2]}_{R,\overline{A'_{[s_I,r_I]}}}=	\widetilde{\Pi}^{\mathrm{int},r}_{R,\overline{A'_{[s_I,r_I]}}}.	
\end{align}

\end{proposition}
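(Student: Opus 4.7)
The plan is to mirror the strategy used by Kedlaya--Liu in \cite[Lemma 5.2.10]{KL1}, now transcribed into the preperfectoid coefficient setting $A'_{[s_I,r_I]}$ and its strict quotient, leveraging the two preceding propositions (the intersection identity and the $y+z$ decomposition) that we have already established for this coefficient ring. The inclusion $\widetilde{\Pi}^{\mathrm{int},r}_{R,A'_{[s_1,r_2]}}\subseteq \widetilde{\Pi}^{[s_1,r_1]}_{R,A'_{[s_I,r_I]}}\cap \widetilde{\Pi}^{[s_2,r_2]}_{R,A'_{[s_I,r_I]}}$ is immediate from the definitions of the Gauss norms, since any element of the integral Robba ring on radius $r_2$ has finite $\|\cdot\|_{\alpha^t,A'_{[s_I,r_I]}}$-norm for every $t\in[s_1,r_2]$ and hence in particular on both subintervals $[s_1,r_1]$ and $[s_2,r_2]$.

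For the reverse inclusion, I would take $x$ in the intersection on the left and apply the decomposition proposition (the preperfectoid version of \cite[Lemma 5.2.7]{KL1}) to the interval $[s_2,r_1]$: for each $n>0$ write $x=y_n+z_n$ with $y_n\in \pi^n\widetilde{\Pi}^{\mathrm{int},r_1}_{R,A'_{[s_I,r_I]}}$ and $z_n\in \widetilde{\Pi}^{[s_2,r_2]}_{R,A'_{[s_I,r_I]}}$, together with the controlled norm estimate on $z_n$. Since $x$ also lives in $\widetilde{\Pi}^{[s_1,r_1]}_{R,A'_{[s_I,r_I]}}$, we can use the preceding intersection proposition ($\widetilde{\Pi}^{\mathrm{int},s}\cap \widetilde{\Pi}^{[s,r]}=\widetilde{\Pi}^{\mathrm{int},r}$, in the preperfectoid version) to bootstrap the integral part $y_n$ into $\widetilde{\Pi}^{\mathrm{int},r_2}_{R,A'_{[s_1,r_2]}}$ after taking $n\to\infty$; the tail $z_n$ converges to zero in the appropriate Gauss norms by the estimate in the decomposition lemma, giving $x$ as a limit of integral-radius-$r_2$ elements.

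For the second identity involving the strict quotient $\overline{A'_{[s_I,r_I]}}$, I would lift: choose $\overline{x}$ in the intersection on the left of the quotient ring, lift it to $x$ in $\widetilde{\Pi}^{[s_1,r_1]}_{R,A'_{[s_I,r_I]}}$ (using strictness of the quotient, which is preserved at the level of the Robba ring by the tensor product definition), run the above argument to produce an integral-radius-$r_2$ representative $g$ approximating $x$, and then push $g$ back through the quotient map. The preperfectoid decomposition proposition already guarantees convergence in the quotient norm $\|\cdot\|_{\alpha^r,\overline{A'_{[s_I,r_I]}}}$, so $\overline{g}$ lies in $\widetilde{\Pi}^{\mathrm{int},r}_{R,\overline{A'_{[s_I,r_I]}}}$ and approximates $\overline{x}$.

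The main obstacle, as in the perfectoid case, is ensuring that the integral-part approximation indeed lands in the integral Robba ring on the \emph{enlarged} interval $[s_1,r_2]$ rather than merely on $[s_2,r_1]$; this is why one must combine the two preceding propositions in tandem, using the intersection proposition to upgrade the radius and the decomposition proposition to control the error. In the quotient case, the mild subtlety is checking that the strictness of $A'_{[s_I,r_I]}\to \overline{A'_{[s_I,r_I]}}\to 0$ propagates through the completed tensor product defining $\widetilde{\Pi}^{I}_{R,-}$, but this is already built into our setting convention on the quotient coefficients, so lifting is legal.
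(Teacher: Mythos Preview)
Your overall strategy---combine the preperfectoid analogs of \cite[Lemma 5.2.6]{KL1} and \cite[Lemma 5.2.7]{KL1}---is exactly what the paper intends: its proof consists entirely of the sentence ``See the proof of the previous proposition,'' which in turn just points to \cite[Lemma 5.2.10]{KL1}. So at the level of approach you are aligned.

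However, you have been misled by a typo in the printed statement. The right-hand side should read $\widetilde{\Pi}^{[s_1,r_2]}_{R,A'_{[s_I,r_I]}}$ (the Robba ring on the union interval), not $\widetilde{\Pi}^{\mathrm{int},r}$; this is what \cite[Lemma 5.2.10]{KL1} actually asserts. The integral version you are trying to prove is false: for instance $\pi^{-1}$ lies in $\widetilde{\Pi}^{[s_1,r_1]}\cap\widetilde{\Pi}^{[s_2,r_2]}$ but in no integral Robba ring. Your argument that ``$z_n$ converges to zero \dots\ giving $x$ as a limit of integral-radius-$r_2$ elements'' is therefore aiming at the wrong target and cannot succeed as stated.

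For the correct statement the argument is simpler than your outline: no limit in $n$ is needed. Apply the decomposition lemma once on the interval $[s_1,r_1]$ (not $[s_2,r_1]$) to write $x=y+z$ with $y\in\widetilde{\Pi}^{\mathrm{int},r_1}_{R,A'_{[s_I,r_I]}}$ and $z\in\widetilde{\Pi}^{[s_1,r']}_{R,A'_{[s_I,r_I]}}$ for every $r'\geq r_1$, so in particular $z\in\widetilde{\Pi}^{[s_1,r_2]}_{R,A'_{[s_I,r_I]}}$. Then $y=x-z\in\widetilde{\Pi}^{[s_2,r_2]}_{R,A'_{[s_I,r_I]}}$ and $y\in\widetilde{\Pi}^{\mathrm{int},r_1}\subset\widetilde{\Pi}^{\mathrm{int},s_2}$, so the intersection lemma gives $y\in\widetilde{\Pi}^{\mathrm{int},r_2}\subset\widetilde{\Pi}^{[s_1,r_2]}$, and hence $x=y+z\in\widetilde{\Pi}^{[s_1,r_2]}_{R,A'_{[s_I,r_I]}}$. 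Your choice of $[s_2,r_1]$ for the decomposition would leave $z$ controlled only down to $s_2$, not $s_1$, which is why the interval matters. The quotient case then goes through by lifting, as you describe.
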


\begin{proof}
See the proof of the previous proposition.	
\end{proof}


\newpage

\section{Period Rings and Sheaves with coefficient in Adic Spaces}

\subsection{Fundamental Settings}

\noindent Scholze's diamonds \cite{Sch1} are very general adic stacks where the category is big and convenient enough to include the corresponding perfectoid spaces and the corresponding seminormal rigid analytic spaces. The two spaces happen to be our main interests at the same time. However, we do not actually use the language of diamonds.

\indent Now we use the notation $D$ to denote a general adic space over $E$ in the sense of \cite{KL1} and \cite{KL2}. Then we define the following period rings with coefficient in $D$ by using the corresponding Banach algebra structures over $E$.

\begin{definition}

We now consider the following sheaves in adic topology, \'etale topology and pro-\'etale topology (which are indeed not just presheaves by using the corresponding Schauder basis):
\begin{align}
\widetilde{\Pi}_{R,D},\widetilde{\Pi}^\infty_{R,D},\widetilde{\Pi}^I_{R,D},\widetilde{\Pi}^r_{R,D}	
\end{align}
which assign each affinoid subdomain or perfectoid subdomain $\mathrm{Spa}(A,A^+)$ of $D$ to the following rings:
\begin{align}
\widetilde{\Pi}_{R,A},\widetilde{\Pi}^\infty_{R,A},\widetilde{\Pi}^I_{R,A},\widetilde{\Pi}^r_{R,A}.	
\end{align}

\end{definition}

\indent To define Frobenius modules over $D$ we first define the following local picture:

\begin{definition} \mbox{\bf{(After Kedlaya-Liu \cite[Definition 4.4.4]{KL2})}}
We define the corresponding $\varphi^a$-modules over the corresponding period rings above, which we could use some uniform notation $\triangle^?,?=\emptyset,I,r,\infty,\triangle=\widetilde{\Pi}_{R,B}$ to denote these. We define any $\varphi^a$-module over the period rings with index $\emptyset$ above to be a finitely generated projective module over $\triangle^?$ carrying semilinear action from the Frobenius operator $\varphi^a$ such that $\varphi^{a*}M\overset{\sim}{\rightarrow}M$. We define any $\varphi^a$-module over the period rings with index $r$ above to be a finitely generated projective module over $\triangle^?$ carrying semilinear action from the Frobenius operator $\varphi^a$ such that $\varphi^{a*}M\overset{\sim}{\rightarrow}M\otimes \triangle^{rp^{-ah}}$. We define any $\varphi^a$-module over the period rings with index $[s,r]$ above to be a finitely generated projective module over $\triangle^?$ carrying semilinear action from the Frobenius operator $\varphi^a$ such that $\varphi^{a*}M\otimes_{\triangle^{[sp^{-ha},rp^{-ha}]}} \triangle^{[s,rp^{-ha}]}\overset{\sim}{\rightarrow}M\otimes \triangle^{[s,rp^{-ha}]}$. For a corresponding object over $\triangle^\emptyset$, we assume the module is the base change from some module over $\triangle^{r_0}$ for some $r_0>0$.
\end{definition}

\begin{definition} \mbox{\bf{(After Kedlaya-Liu \cite[Definition 4.4.4]{KL2})}}
We define the corresponding pseudocoherent $\varphi^a$-modules over the corresponding period rings above, which we could use some uniform notation $\triangle^?,?=\emptyset,I,r,\infty,\triangle=\widetilde{\Pi}_{R,B}$ to denote these. We define any pseudocoherent $\varphi^a$-module over the period rings with index $\empty$ above to be a pseudocoherent module over $\triangle^?$ carrying semilinear action from the Frobenius operator $\varphi^a$ such that $\varphi^{a*}M\overset{\sim}{\rightarrow}M$. We define any pseudocoherent $\varphi^a$-module over the period rings with index $r$ above to be a pseudocoherent module over $\triangle^?$ carrying semilinear action from the Frobenius operator $\varphi^a$ such that $\varphi^{a*}M\overset{\sim}{\rightarrow}M\otimes \triangle^{rp^{-ah}}$. We define any pseudocoherent $\varphi^a$-module over the period rings with index $[s,r]$ above to be a pseudocoherent module over $\triangle^?$ carrying semilinear action from the Frobenius operator $\varphi^a$ such that $\varphi^{a*}M\otimes_{\triangle^{[sp^{-ha},rp^{-ha}]}} \triangle^{[s,rp^{-ha}]}\overset{\sim}{\rightarrow}M\otimes \triangle^{[s,rp^{-ha}]}$. For a corresponding object over $\triangle^\emptyset$, we assume the module is the base change from some module over $\triangle^{r_0}$ for some $r_0>0$. As in \cite[Definition 4.4.4]{KL2} we impose the corresponding topological condition on the corresponding modules by imposing that all the modules are complete with respect to the corresponding natural topology, and over the Robba rings with respect to some specific interval we assume that the corresponding modules are \'etale-stably pseudocoherent. And we assume the corresponding stability with respect to the base change as in \cite[Theorem 4.6.1]{KL2} for modules over $\triangle^{r_0},\triangle^\infty$.
\end{definition}


\begin{definition} \mbox{\bf{(After Kedlaya-Liu \cite[Definition 4.4.6]{KL2})}}
We now define the corresponding $\varphi^a$-bundles over the corresponding period rings above, which we could use some uniform notation $\triangle_{*,B}^?,?=\emptyset,r,*=R$ to denote these. We define a $\varphi^a$-bundle over $\triangle_{*,B}^?,?=\emptyset,r$ to be a compatible family of finitely generated projective $\varphi^a$-modules over $\triangle_{*,B}^?,?=[s',r']$ for suitable $[s',r']$ (namely contained in $(0,r]$ if we are looking at the ring $\triangle_{*,B}^r$) satisfying the corresponding restriction compatibility and cocycle condition.
\end{definition}

\begin{definition} \mbox{\bf{(After Kedlaya-Liu \cite[Definition 4.4.6]{KL2})}}
We now define the corresponding pseudocoherent $\varphi^a$-bundles over the corresponding period rings above, which we could use some uniform notation $\triangle_{*,B}^?,?=\emptyset,r,*=R$ to denote these. We define a pseudocoherent $\varphi^a$-bundle over $\triangle_{*,B}^?,?=\emptyset,r$ to be a compatible family of \'etale-stably pseudocoherent $\varphi^a$-modules over $\triangle_{*,B}^?,?=[s',r']$ for suitable $[s',r']$ (namely contained in $(0,r]$ if we are looking at the ring $\triangle_{*,B}^r$) satisfying the corresponding restriction compatibility and cocycle condition.
\end{definition}

\begin{remark}
The $p$-adic functional analytic property we imposed on the algebraic pseudocoherent modules could be also translated to adic topology, which is the corresponding stably pseudocoherent one. This is very important whenever one works with the corresponding analytic topology instead of the corresponding pro-\'etale topology.	
\end{remark}

\indent Then we define things over the adic spaces.

\begin{definition}
In adic topology, we define the $\varphi^a$-module over any $\triangle^?,?=\emptyset,I,r,\infty,\triangle=\widetilde{\Pi}_{R,D}$ to be a compatible family of $\varphi^a$-modules over $\triangle^?,?=\emptyset,I,r,\infty,\triangle=\widetilde{\Pi}_{R,A}$ for each affinoid subdomain $\mathrm{Spa}(A,A^+)$ of $D$.	
\end{definition}

\begin{definition}
In adic topology, we define a pseudocoherent $\varphi^a$-module over any $\triangle^?,?=\emptyset,I,r,\infty,\triangle=\widetilde{\Pi}_{R,D}$ to be a compatible family of pseudocoherent $\varphi^a$-modules over $\triangle^?,?=\emptyset,I,r,\infty,\triangle=\widetilde{\Pi}_{R,A}$ for each affinoid subdomain $\mathrm{Spa}(A,A^+)$ of $D$.	
\end{definition}

\begin{definition}
In adic topology, we define the $\varphi^a$-bundle over any $\triangle^?,?=\emptyset,I,r,\infty,\triangle=\widetilde{\Pi}_{R,D}$ to be a compatible family of $\varphi^a$-bundles over $\triangle^?,?=\emptyset,I,r,\infty,\triangle=\widetilde{\Pi}_{R,A}$ for each affinoid subdomain $\mathrm{Spa}(A,A^+)$ of $D$.	
\end{definition}

\begin{definition}
In adic topology, we define a pseudocoherent $\varphi^a$-bundle over any $\triangle^?,?=\emptyset,I,r,\infty,\triangle=\widetilde{\Pi}_{R,D}$ to be a compatible family of pseudocoherent $\varphi^a$-bundles over $\triangle^?,?=\emptyset,I,r,\infty,\triangle=\widetilde{\Pi}_{R,A}$ for each affinoid subdomain $\mathrm{Spa}(A,A^+)$ of $D$.	
\end{definition}

\begin{definition}
In pro-\'etale topology, we define the $\varphi^a$-module over any $\triangle^?,?=\emptyset,I,r,\infty,\triangle=\widetilde{\Pi}_{R,D}$ to be a compatible family of $\varphi^a$-modules over $\triangle^?,?=\emptyset,I,r,\infty,\triangle=\widetilde{\Pi}_{R,A}$ for each perfectoid subdomain $\mathrm{Spa}(A,A^+)$ of $D$.	
\end{definition}

\begin{definition}
In pro-\'etale topology, we define a pseudocoherent $\varphi^a$-module over any $\triangle^?,?=\emptyset,I,r,\infty,\triangle=\widetilde{\Pi}_{R,D}$ to be a compatible family of pseudocoherent $\varphi^a$-modules over $\triangle^?,?=\emptyset,I,r,\infty,\triangle=\widetilde{\Pi}_{R,A}$ for each perfectoid subdomain $\mathrm{Spa}(A,A^+)$ of $D$.	
\end{definition}

\begin{definition}
In pro-\'etale topology, we define the $\varphi^a$-bundle over any $\triangle^?,?=\emptyset,I,r,\infty,\triangle=\widetilde{\Pi}_{R,D}$ to be a compatible family of $\varphi^a$-bundles over $\triangle^?,?=\emptyset,I,r,\infty,\triangle=\widetilde{\Pi}_{R,A}$ for each perfectoid subdomain $\mathrm{Spa}(A,A^+)$ of $D$.	
\end{definition}

\begin{definition}
In pro-\'etale topology, we define a pseudocoherent $\varphi^a$-bundle over any $\triangle^?,?=\emptyset,I,r,\infty,\triangle=\widetilde{\Pi}_{R,D}$ to be a compatible family of pseudocoherent $\varphi^a$-bundles over $\triangle^?,?=\emptyset,I,r,\infty,\triangle=\widetilde{\Pi}_{R,A}$ for each perfectoid subdomain $\mathrm{Spa}(A,A^+)$ of $D$.	
\end{definition}

\subsection{Fundamental Comparisons} \label{section3.2}

\begin{theorem} \mbox{\bf{(After Kedlaya-Liu \cite[Theorem 4.6.1]{KL2})}} \label{theorem3.10}
Consider the following categories:\\
1. The corresponding category of all the sheaves of pseudocoherent $\mathcal{O}_{\text{\'etale}}$-modules over the adic Fargues-Fontaine curve $Y_{\mathrm{FF},R,D}$ in the corresponding \'etale topology;\\
2. The corresponding category of all the sheaves of pseudocoherent $\mathcal{O}_{\text{pro-\'etale}}$-modules over the adic Fargues-Fontaine curve $Y_{\mathrm{FF},R,D}$ in the corresponding pro-\'etale topology;\\
3. The corresponding category of all the pseudocoherent bundles over the period ring $\widetilde{\Pi}_{R,D}$, carrying the corresponding Frobenius action from the operator $\varphi^a$;\\
4. The corresponding category of all the pseudocoherent modules over the period ring $\widetilde{\Pi}^{\infty}_{R,D}$, carrying the corresponding Frobenius action from the operator $\varphi^a$;\\
5. The corresponding category of all the pseudocoherent modules over the period ring $\widetilde{\Pi}^{}_{R,D}$, carrying the corresponding Frobenius action from the operator $\varphi^a$;\\
6. The corresponding category of all the finitely generated projective modules over the period ring $\widetilde{\Pi}^{[s,r]}_{R,D}$, carrying the corresponding Frobenius action from the operator $\varphi^a$, where $0<s\leq r/p^{ah}$.\\
Then we have that they are equivalent.
\end{theorem}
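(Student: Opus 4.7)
The plan is to reduce the global equivalence over the adic space $D$ to the perfectoid coefficient version, namely \cref{theorem3.11}, via pro-\'etale descent. By the definitions laid down in Section 3.1, every object of each of the six categories over $D$ is specified as a compatible family of objects of the corresponding local category indexed by perfectoid subdomains $\mathrm{Spa}(A,A^+) \subset D$, with restriction compatibility and cocycle conditions. Hence all six categories constitute sheaves of categories on the pro-\'etale site of $D$, and it suffices to exhibit the equivalence on each perfectoid chart together with compatibility under restriction along inclusions of such charts.

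First, I would invoke \cref{theorem3.11}, the perfectoid-coefficient version after Kedlaya-Liu \cite[Theorem 4.6.1]{KL2}, to establish the six local equivalences with $A$ in place of $D$. The relevant functors are the standard ones: global sections on the appropriate loci of $Y_{\mathrm{FF},R,A}$ pass from sheaves (categories 1 and 2) to Frobenius modules (categories 3--5); the Proj-type/glueing construction goes the other way; the inverse limit $\varprojlim_{s\to 0}$ and the filtered colimit $\varinjlim_{r\to\infty}$ relate the rings $\widetilde{\Pi}^{[s,r]}_{R,A}$, $\widetilde{\Pi}^r_{R,A}$, $\widetilde{\Pi}^\infty_{R,A}$, and $\widetilde{\Pi}_{R,A}$; and the compatibility built into the definition of a $\varphi^a$-bundle recovers the closed-interval data from a $\widetilde{\Pi}_{R,A}$-bundle, passing to category 6.

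Second, I would verify that each of these functors is natural in the coefficient algebra, i.e.\ commutes with base change along an inclusion of perfectoid subdomains $\mathrm{Spa}(A',A'^+) \hookrightarrow \mathrm{Spa}(A,A^+)$ inside $D$. This reduces to the fact that the relevant period rings are defined by completed tensor products $\widetilde{\Pi}^{I}_R \widehat{\otimes}_E A$, so they are manifestly functorial in $A$, while the \'etale-stable pseudocoherence imposed in our definitions (following \cite[Definition 4.4.4]{KL2}) ensures that pseudocoherent modules and bundles behave well under such base change. The naturality of the Frobenius pullback condition $\varphi^{a*}M \otimes \triangle^{[s,rp^{-ha}]} \overset{\sim}{\rightarrow} M \otimes \triangle^{[s,rp^{-ha}]}$ under base change is immediate. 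Consequently the local equivalences glue into an equivalence of sheaves of categories on the pro-\'etale site of $D$, which is precisely the conclusion.

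The main obstacle I expect is the descent step for pseudocoherent (as opposed to finite projective) modules. Finite projective descent along pro-\'etale perfectoid covers in the style of \cite[Theorem 1.3.4]{KL2} transfers readily, but pseudocoherent descent is considerably more subtle and relies essentially on the \'etale-stable pseudocoherence condition imposed in Section 3.1, together with the vanishing of higher cohomology of pseudocoherent sheaves on perfectoid slices. When comparing categories 1 and 2, i.e.\ the \'etale and pro-\'etale sheaves on $Y_{\mathrm{FF},R,D}$, one needs to propagate this stability through the entire pro-\'etale tower of perfectoid covers; this is the technical heart of the Kedlaya-Liu framework and is precisely what our setup is designed to accommodate. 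Once these ingredients are in place, the global equivalence follows formally from pro-\'etale glueing.
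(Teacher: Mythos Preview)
Your proposal is correct and follows the same approach as the paper: the paper's proof of \cref{theorem3.10} consists entirely of the sentence ``This could be proved by the following theorem,'' after which \cref{theorem3.11} is stated and proved. Your reduction to perfectoid charts via the compatible-family definitions of Section~3.1, followed by invocation of \cref{theorem3.11} and a check of naturality in $A$, is exactly this strategy spelled out in more detail than the paper itself provides.
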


This could be proved by the following theorem which is also considered in \cite{T2} in the corresponding Hodge-Iwasawa theory:

\begin{theorem} \mbox{\bf{(After Kedlaya-Liu \cite[Theorem 4.6.1]{KL2})}}  \label{theorem3.11}
Consider the following categories ($A$ is a perfectoid algebra corresponding to some local chart of $D$ in the previous theorem):\\
1. The corresponding category of all the sheaves of pseudocoherent $\mathcal{O}_{\text{\'etale}}$-modules over the adic Fargues-Fontaine curve $Y_{\mathrm{FF},R,A}$ in the corresponding \'etale topology;\\
2. The corresponding category of all the sheaves of pseudocoherent $\mathcal{O}_{\text{pro-\'etale}}$-modules over the adic Fargues-Fontaine curve $Y_{\mathrm{FF},R,A}$ in the corresponding pro-\'etale topology;\\
3. The corresponding category of all the pseudocoherent bundles over the period ring $\widetilde{\Pi}_{R,A}$, carrying the corresponding Frobenius action from the operator $\varphi^a$;\\
4. The corresponding category of all the pseudocoherent modules over the period ring $\widetilde{\Pi}^{\infty}_{R,A}$, carrying the corresponding Frobenius action from the operator $\varphi^a$;\\
5. The corresponding category of all the pseudocoherent modules over the period ring $\widetilde{\Pi}^{}_{R,A}$, carrying the corresponding Frobenius action from the operator $\varphi^a$ (note that here we assume that the module descends to some module carrying Frobenius over $\widetilde{\Pi}^{r}_{R,A}$ for some radius $r_0>0$, which further base changes to some module carrying Frobenius over $\widetilde{\Pi}^{[s,r]}_{R,A}$ for some interval $[s,r]$ which is assumed to be \'etale-stably pseudocoherent);\\
6. The corresponding category of all the finitely generated projective modules over the period ring $\widetilde{\Pi}^{[s,r]}_{R,A}$, carrying the corresponding Frobenius action from the operator $\varphi^a$, where $0<s\leq r/p^{ah}$.\\
Then we have that they are equivalent.
\end{theorem}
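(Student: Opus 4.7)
The plan is to reduce \cref{theorem3.11} to \cite[Theorem 4.6.1]{KL2} by systematically checking that the functorial constructions there carry over to the perfectoid-coefficient setting $A$, using the intersection and decomposition properties already established in the preceding propositions. The equivalences split naturally into a module-theoretic part $(3)$--$(6)$ and a sheaf-theoretic part $(1)$--$(3)$, joined by the tautological identification of bundles on the Fargues--Fontaine curve with Frobenius-equivariant compatible families on closed annuli. The strategy throughout is: work locally on a closed interval, use the Frobenius $\varphi^a$ to propagate, and then glue via \'etale-stable pseudocoherence.

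First I would establish $(6)\Leftrightarrow(3)$. Starting from a finite projective $\varphi^a$-module $M^{[s,r]}$ over $\widetilde{\Pi}^{[s,r]}_{R,A}$, the Frobenius pullback isomorphism produces a compatible family on the translated intervals $[sp^{-iah},rp^{-iah}]$ for $i\in\mathbb{Z}$, which Kiehl-style glueing in the pseudocoherent setting (as in \cite[Theorem~2.7.7]{KL1}) assembles into a pseudocoherent $\varphi^a$-bundle over $\widetilde{\Pi}_{R,A}$; here the intersection identity $\widetilde{\Pi}^{[s_1,r_1]}_{R,A}\cap \widetilde{\Pi}^{[s_2,r_2]}_{R,A}=\widetilde{\Pi}^{[s_1,r_2]}_{R,A}$ established above supplies the exactness needed for the glueing square. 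Conversely, restricting a bundle to a single interval where Frobenius becomes an isomorphism recovers a finite projective module, and the constructions are mutually inverse.

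Next I would obtain $(3)\Leftrightarrow(5)$ by base change: a pseudocoherent $\varphi^a$-module over $\widetilde{\Pi}_{R,A}$ restricts to a compatible family on each $\widetilde{\Pi}^{[s,r]}_{R,A}$, and conversely the \'etale-stable pseudocoherence hypothesis, together with the descent $x=y+z$ furnished by the decomposition proposition above, shows that the inverse limit recovers the original module. The equivalence $(4)\Leftrightarrow(5)$ follows from the fact that $\widetilde{\Pi}^\infty_{R,A}\hookrightarrow \widetilde{\Pi}_{R,A}$ is faithfully flat in the appropriate completed sense for Frobenius modules, once one invokes \cite[Theorem~4.6.1]{KL2} in the local chart together with the \'etale-stability hypothesis. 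Finally, $(1)\Leftrightarrow(2)\Leftrightarrow(3)$ comes from interpreting $\widetilde{\Pi}^{[s,r]}_{R,A}$ as the ring of sections of $\mathcal{O}$ on a closed annulus inside $Y_{\mathrm{FF},R,A}$, together with \'etale-to-pro-\'etale descent for pseudocoherent sheaves, which is available because $A$ is perfectoid and therefore $\widetilde{\Pi}^{[s,r]}_{R,A}$ remains sheafy in both topologies after \cite{KL1}.

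The main obstacle will be propagating pseudocoherence, and more restrictively \'etale-stable pseudocoherence, through the completed tensor products $\widehat{\otimes}_E A$ and through the glueing squares. Unlike the trivial-coefficient case of \cite{KL2}, one has to track how the norms $\|.\|_{\alpha^r}\otimes \|.\|_A$ interact with pseudocoherent approximations; this is exactly the content of the propositions proved above, which allow one to replace an arbitrary element by an integral-plus-small-error decomposition with controlled $\alpha^t$-norm for all $t\in[s,r]$. With these estimates in hand, the verifications that (a) Frobenius-pulled modules remain pseudocoherent, (b) intersections on overlapping annuli compute the right object, and (c) pro-\'etale covers satisfy effective descent, each reduce to the corresponding statement in \cite[Theorem~4.6.1]{KL2} up to a bounded perturbation, yielding the desired equivalence.
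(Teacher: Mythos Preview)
Your proposal is broadly correct and follows the same overall architecture as the paper's proof: both reduce to \cite[Theorem 4.6.1]{KL2}, use Frobenius pullback to propagate from a single interval $[s,r]$ to its translates $[sp^{-kah},rp^{-kah}]$, and invoke sheafiness of the perfectoid coefficient $A$ for the \'etale/pro-\'etale comparison. The direction $(3)\to(6)$ by projection and $(6)\to(3)$ by Frobenius propagation plus glueing is exactly what the paper does.

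The main difference is in the glueing machinery. You ground the glueing in the intersection and decomposition propositions of Section~2 together with Kiehl-style descent from \cite[Theorem~2.7.7]{KL1}, tracking norms $\|.\|_{\alpha^r}\otimes\|.\|_A$ by hand. The paper instead works in the \emph{reified} adic framework: it covers the space by $\mathrm{Spra}(\widetilde{\Pi}_{R,A}^{[sp^{-kah},rp^{-kah}]},\widetilde{\Pi}_{R,A}^{[sp^{-kah},rp^{-kah}],\mathrm{Gr}})$ and then invokes \cite[Proposition 2.6.17]{KL2} directly to obtain finiteness of global sections, and cites \cite[Theorem 4.4.3]{KL2} for the passage $(1),(2)\to(3)$. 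Your approach is more self-contained and makes visible why the preceding norm estimates matter; the paper's approach is quicker because the reified machinery in \cite{KL2} already packages those estimates. One caution: your justification of $(4)\Leftrightarrow(5)$ via ``faithful flatness in the appropriate completed sense'' is vague and not how either the paper or \cite{KL2} argues; the paper handles $(3)\to(4),(5)$ by the same covering-plus-Frobenius-plus-\cite[Proposition 2.6.17]{KL2} strategy, not by a flatness statement, and you should replace that step accordingly.
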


\begin{proof}
The proof is parallel to \cite[Theorem 4.6.1]{KL2}. From 1,2 to 3, the corresponding results follow from \cite[Theorem 4.4.3]{KL2} due to the natural sheafiness since we are working over perfectoid spaces. Then to glue a bundle to get a module over the Robba ring over the corresponding Robba ring $\widetilde{\Pi}_{R,A}^\infty$, we use the corresponding reified adic space $\mathrm{Spra}(\widetilde{\Pi}_{R,A}^{[sp^{-kah},rp^{-kah}]},\widetilde{\Pi}_{R,A}^{[sp^{-kah},rp^{-kah}],\mathrm{Gr}})$ to cover the corresponding whole spaces, and use the corresponding Frobenius pullback to basically control the corresponding finiteness of the corresponding sections over each member in this covering, then we can derive the result from \cite[Proposition 2.6.17]{KL2}. The same strategy allows us to go from 3 to 4,5. The corresponding equivalence between 3 and 6 could be proved by the following argument. First from 3 to 6 we just take the corresponding projection. Then back we consider the corresponding Frobenius to control finiteness over each $\mathrm{Spra}(\widetilde{\Pi}_{R,A}^{[sp^{-kah},rp^{-kah}]},\widetilde{\Pi}_{R,A}^{[sp^{-kah},rp^{-kah}],\mathrm{Gr}})$ for any $k\geq 0$, then glueing over any arbitrary interval by using the sheafiness of the period rings. One also checks the corresponding pseudocoherence as in \cite[Theorem 4.6.1]{KL2}.
\end{proof}

\begin{corollary} \label{corollary3.12}
We have the following categories are equivalent predicted in \cite{CKZ} partially and implicitly (here $R'$ is a perfect algebra with the same type as that of $R$ but needs not to be the same as $R$):	\\
1. The corresponding category of all the sheaves of pseudocoherent $\mathcal{O}_{\text{\'etale}}$-modules over the adic Fargues-Fontaine curve $Y_{\mathrm{FF},R,Y_{\mathrm{FF},R'}}$ in the corresponding \'etale topology;\\
2. The corresponding category of all the sheaves of pseudocoherent $\mathcal{O}_{\text{pro-\'etale}}$-modules over the adic Fargues-Fontaine curve $Y_{\mathrm{FF},R,Y_{\mathrm{FF},R'}}$ in the corresponding pro-\'etale topology;\\
3. The corresponding category of all the pseudocoherent bundles over the period ring $\widetilde{\Pi}_{R,Y_{\mathrm{FF},R'}}$, carrying the corresponding Frobenius action from the operator $\varphi^a$;\\
4. The corresponding category of all the pseudocoherent modules over the period ring $\widetilde{\Pi}^{\infty}_{R,Y_{\mathrm{FF},R'}}$, carrying the corresponding Frobenius action from the operator $\varphi^a$;\\
5. The corresponding category of all the pseudocoherent modules over the period ring $\widetilde{\Pi}^{}_{R,Y_{\mathrm{FF},R'}}$, carrying the corresponding Frobenius action from the operator $\varphi^a$;\\
6. The corresponding category of all the finitely generated projective modules over the period ring $\widetilde{\Pi}^{[s,r]}_{R,Y_{\mathrm{FF},R'}}$, carrying the corresponding Frobenius action from the operator $\varphi^a$, where $0<s\leq r/p^{ah}$.
\end{corollary}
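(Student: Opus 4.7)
The plan is to reduce the statement directly to \cref{theorem3.10} by verifying that $D := Y_{\mathrm{FF},R'}$ satisfies the hypothesis of that theorem, namely that it is an adic space over $E$ which admits a covering by perfectoid subdomains in the pro-\'etale topology. Once this reduction is in place, all six equivalences follow formally from the corresponding equivalences in \cref{theorem3.10}, since the categories on both sides of \cref{corollary3.12} are, by the very definitions in \cref{section3.2}, compatible families of objects indexed by perfectoid charts of $D$ together with restriction and cocycle data.

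First I would recall the construction of $Y_{\mathrm{FF},R'}$ as the adic space associated with the relative Fargues-Fontaine curve for the perfect uniform adic Banach algebra $R'$ over $\mathcal{O}_F$. Locally on the variable parameter $r$, one obtains the affinoid perfectoid subspaces $\mathrm{Spa}(\widetilde{\Pi}^{[s,r]}_{R'}, \widetilde{\Pi}^{[s,r],+}_{R'})$ inside $Y_{\mathrm{FF},R'}$; these, together with their pro-\'etale refinements, give an explicit covering of $Y_{\mathrm{FF},R'}$ by perfectoid subdomains of the type required. Thus $Y_{\mathrm{FF},R'}$ falls under the hypothesis on $D$ stipulated in \cref{theorem3.10}.

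Given this, I would argue as follows. For each perfectoid chart $\mathrm{Spa}(A,A^+) \subset Y_{\mathrm{FF},R'}$, \cref{theorem3.11} supplies the equivalences between the six local categories formulated with $A$-coefficients, and these equivalences are natural in $A$ with respect to the restriction maps (since each construction, either on the sheaf side or the module side, is functorial with respect to the strict morphisms between the coefficient algebras). Consequently the six families of local categories glue along the perfectoid cover of $Y_{\mathrm{FF},R'}$, and the compatible-family definitions in \cref{section3.2} yield the corresponding global equivalences. This is exactly the content of \cref{theorem3.10} specialized to $D = Y_{\mathrm{FF},R'}$.

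The main point to be careful about, and the step I expect to be the principal obstacle, is the passage from a pro-\'etale perfectoid cover to well-defined global categories of pseudocoherent $\varphi^a$-objects on $Y_{\mathrm{FF},R,Y_{\mathrm{FF},R'}}$. Namely one has to ensure that the \'etale-stable pseudocoherence condition and the Frobenius-equivariant gluing behave well when the coefficient ring $A$ itself varies in a perfectoid pro-\'etale cover of $Y_{\mathrm{FF},R'}$, because the base-change stability invoked in \cite[Theorem 4.6.1]{KL2} must now be applied fiberwise across the cover. This is the same subtlety handled in the proof of \cref{theorem3.10}, and once one grants that verification, \cref{corollary3.12} follows as an immediate specialization.
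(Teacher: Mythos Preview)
Your proposal is correct and matches the paper's intended argument: the paper states \cref{corollary3.12} as a corollary with no separate proof, so it is meant to follow immediately from \cref{theorem3.10} by taking $D=Y_{\mathrm{FF},R'}$, exactly as you outline. One small point of care: the affinoid charts $\mathrm{Spa}(\widetilde{\Pi}^{[s,r]}_{R'},\widetilde{\Pi}^{[s,r],+}_{R'})$ are in general only preperfectoid rather than perfectoid, but since you already invoke their pro-\'etale refinements (and since the paper also records the preperfectoid-coefficient analogue of \cref{theorem3.11} immediately after \cref{corollary3.12}), the reduction to \cref{theorem3.10} goes through.
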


\indent For some application one could consider the corresponding preperfectoid coefficients:

\begin{theorem} \mbox{\bf{(After Kedlaya-Liu \cite[Theorem 4.6.1]{KL2})}}  
Consider the following categories ($A$ is a preperfectoid algebra over $E$):\\
1. The corresponding category of all the sheaves of pseudocoherent $\mathcal{O}_{\text{\'etale}}$-modules over the adic Fargues-Fontaine curve $Y_{\mathrm{FF},R,A}$ in the corresponding \'etale topology;\\
2. The corresponding category of all the sheaves of pseudocoherent $\mathcal{O}_{\text{pro-\'etale}}$-modules over the adic Fargues-Fontaine curve $Y_{\mathrm{FF},R,A}$ in the corresponding pro-\'etale topology;\\
3. The corresponding category of all the pseudocoherent bundles over the period ring $\widetilde{\Pi}_{R,A}$, carrying the corresponding Frobenius action from the operator $\varphi^a$;\\
4. The corresponding category of all the pseudocoherent modules over the period ring $\widetilde{\Pi}^{\infty}_{R,A}$, carrying the corresponding Frobenius action from the operator $\varphi^a$;\\
5. The corresponding category of all the pseudocoherent modules over the period ring $\widetilde{\Pi}^{}_{R,A}$, carrying the corresponding Frobenius action from the operator $\varphi^a$ (note that here we assume that the module descends to some module carrying Frobenius over $\widetilde{\Pi}^{r}_{R,A}$ for some radius $r>0$, which further base changes to some module carrying Frobenius over $\widetilde{\Pi}^{[s,r]}_{R,A}$ for some interval $[s,r]$ which is assumed to be \'etale-stably pseudocoherent);\\
6. The corresponding category of all the finitely generated projective modules over the period ring $\widetilde{\Pi}^{[s,r]}_{R,A}$, carrying the corresponding Frobenius action from the operator $\varphi^a$, where $0<s\leq r/p^{ah}$.\\
Then we have that they are equivalent.
\end{theorem}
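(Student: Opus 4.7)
The plan is to reduce this preperfectoid version to the perfectoid version established in Theorem 3.11, using the fact that any preperfectoid adic Banach uniform algebra $A$ over $E$ admits a canonical perfectoid completion $A^{\mathrm{perf}}$ (obtained by further adjoining $p$-power roots of the chosen uniformizer and completing) sitting over $A$ via a strict morphism compatible with all the constructions appearing in the six categories.

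First I would identify, for each of the period rings $\widetilde{\Pi}^?_{R,A}$, the corresponding perfectoid version $\widetilde{\Pi}^?_{R,A^{\mathrm{perf}}}$ as the completion of the natural base change, and check compatibility with Frobenius. The propositions established above for preperfectoid coefficients (the intersection identity generalizing \cite[Lemma 5.2.6]{KL1} and the decomposition estimate generalizing \cite[Lemma 5.2.7]{KL1}, in the $A'_{[s_I,r_I]}$ form) already provide the analytic control needed to see that all the relevant sheaves and Robba rings behave functorially under $A \to A^{\mathrm{perf}}$, and that the \'etale-stably pseudocoherent condition together with the Frobenius stability is preserved under this base change.

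Second, after invoking Theorem 3.11 for $A^{\mathrm{perf}}$ to obtain the equivalences among (1)--(6) in the perfectoid case, I would run the same reified-covering strategy as in the proof of Theorem 3.11 directly over $A$: the implication $(1),(2)\Rightarrow(3)$ uses \cite[Theorem 4.4.3]{KL2} together with sheafiness transported from the perfectoid cover; to glue pseudocoherent bundles into modules over $\widetilde{\Pi}_{R,A}^\infty$ and $\widetilde{\Pi}_{R,A}$, one covers by the reified adic spectra $\mathrm{Spra}(\widetilde{\Pi}_{R,A}^{[sp^{-kah},rp^{-kah}]},\widetilde{\Pi}_{R,A}^{[sp^{-kah},rp^{-kah}],\mathrm{Gr}})$ and uses Frobenius pullback to propagate finiteness, after which \cite[Proposition 2.6.17]{KL2} glues the pieces; the equivalence $(3)\Leftrightarrow(6)$ is proved by projection in one direction and Frobenius-controlled gluing in the other, with the pseudocoherence check carried out as in \cite[Theorem 4.6.1]{KL2}.

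The main obstacle I expect will be descent of pseudocoherent modules from $A^{\mathrm{perf}}$ back down to $A$: sheafiness of the Robba rings with only preperfectoid coefficients is substantially more delicate than in the perfectoid case, so one must verify carefully that the map $\widetilde{\Pi}^{[s,r]}_{R,A} \to \widetilde{\Pi}^{[s,r]}_{R,A^{\mathrm{perf}}}$ is faithfully flat in a sense strong enough to descend the \'etale-stable pseudocoherence property, and that the cocycle data on reified coverings of $\mathrm{Spa}(A,A^+)$ assembled at the perfectoid level actually descends to $A$. Once this point is secured, the remaining equivalences follow by transporting the perfectoid case along $A\to A^{\mathrm{perf}}$ and re-gluing via the same reified-covering argument as in Theorem 3.11.
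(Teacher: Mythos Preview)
Your second paragraph actually contains the paper's entire argument: the reified covering by $\mathrm{Spra}(\widetilde{\Pi}_{R,A}^{[sp^{-kah},rp^{-kah}]},\widetilde{\Pi}_{R,A}^{[sp^{-kah},rp^{-kah}],\mathrm{Gr}})$, Frobenius pullback to control finiteness, \cite[Theorem 4.4.3]{KL2} for $(1),(2)\Rightarrow(3)$, \cite[Proposition 2.6.17]{KL2} for the gluing, and projection/Frobenius for $(3)\Leftrightarrow(6)$. The paper runs exactly this, verbatim as in the perfectoid Theorem~3.11.

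The difference is that you frame everything as a reduction to the perfectoid completion $A^{\mathrm{perf}}$ followed by descent, and you flag this descent as the main obstacle. The paper does not do this at all. It invokes directly that preperfectoid rings are sheafy (this is part of the Kedlaya--Liu machinery in \cite{KL2}), so $\widetilde{\Pi}^{[s,r]}_{R,A}$ is already sheafy for preperfectoid $A$ without appealing to $A^{\mathrm{perf}}$. Once you have that, the proof of Theorem~3.11 goes through word for word with ``perfectoid'' replaced by ``preperfectoid,'' and no descent step is needed. Your phrase ``sheafiness transported from the perfectoid cover'' is precisely the unnecessary detour: sheafiness is available intrinsically. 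The faithfully-flat descent of \'etale-stably pseudocoherent modules along $\widetilde{\Pi}^{[s,r]}_{R,A}\to\widetilde{\Pi}^{[s,r]}_{R,A^{\mathrm{perf}}}$ that you worry about is therefore a problem you created for yourself; drop the first and third paragraphs and your second paragraph is the proof.
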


\begin{proof}
The proof is parallel to \cite[Theorem 4.6.1]{KL2}. From 1,2 to 3, the corresponding results follow from \cite[Theorem 4.4.3]{KL2} due to the natural sheafiness since we are working over preperfectoid spaces. Then to glue a bundle to get a module over the Robba ring over the corresponding Robba ring $\widetilde{\Pi}_{R,A}^\infty$, we use the corresponding reified adic space $\mathrm{Spra}(\widetilde{\Pi}_{R,A}^{[sp^{-kah},rp^{-kah}]},\widetilde{\Pi}_{R,A}^{[sp^{-kah},rp^{-kah}],\mathrm{Gr}})$ to cover the corresponding whole spaces, and use the corresponding Frobenius pullback to basically control the corresponding finiteness of the corresponding sections over each member in this covering, then we can derive the result from \cite[Proposition 2.6.17]{KL2}. The same strategy allows us to go from 3 to 4,5. The corresponding equivalence between 3 and 6 could be proved by the following argument. First from 3 to 6 we just take the corresponding projection. Then back we consider the corresponding Frobenius to control finiteness over each $\mathrm{Spra}(\widetilde{\Pi}_{R,A}^{[sp^{-kah},rp^{-kah}]},\widetilde{\Pi}_{R,A}^{[sp^{-kah},rp^{-kah}],\mathrm{Gr}})$ for any $k\geq 0$, then glueing over any arbitrary interval by using the sheafiness of the period rings. One also checks the corresponding pseudocoherence as in \cite[Theorem 4.6.1]{KL2}.\\
\end{proof}

\begin{definition} \mbox{\bf{(After Kedlaya-Liu \cite[Definition 4.4.4]{KL2})}}
We define the corresponding $\varphi^a$-$\varphi^{'a}$-modules over the corresponding period rings, which we could use some uniform notation $\triangle^?\widehat{\otimes}\triangle^{?'},?,?'=\emptyset,I,r,\infty,\triangle=\widetilde{\Pi}_{R}$ to denote these. We define any $\varphi^a$-$\varphi^{'a}$-module over the period rings with index $\emptyset$ above to be a finitely generated projective module over $\triangle^?\widehat{\otimes}\triangle^{?'}$ carrying semilinear action from the Frobenius operator $\varphi^a$ such that $\varphi^{a*}M\overset{\sim}{\rightarrow}M$ where $\varphi^a$ comes from the first factor, and carrying semilinear action from the Frobenius operator $\varphi^{'a}$ such that $\varphi^{'a*}M\overset{\sim}{\rightarrow}M$ where $\varphi^{'a}$ comes from the second factor, and we assume that this module descends to some $\triangle^r\widehat{\otimes}\triangle^{r'}$ for $0<r,r'<\infty$. We define any $\varphi^a$-$\varphi^{'a}$-module over the period rings with index $r_1,r_2$ above to be a finitely generated projective module over $\triangle^?\widehat{\otimes}\triangle^{?'}$ carrying semilinear action from the Frobenius operator $\varphi^a$ such that $\varphi^{a*}M\overset{\sim}{\rightarrow}M\otimes \triangle^{r_1p^{-ah}}\widehat{\otimes}\triangle^{?'}$, carrying semilinear action from the Frobenius operator $\varphi^{'a}$ such that $\varphi^{'a*}M\overset{\sim}{\rightarrow}M\otimes \triangle^?\widehat{\otimes}\triangle^{r_2p^{-ah}}$. We define any $\varphi^a$-$\varphi^{'a}$-module over the period rings with index $[s_1,r_1],[s_2,r_2]$ above to be a finitely generated projective module over $\triangle^?\widehat{\otimes}\triangle^{?'}$ carrying semilinear action from the Frobenius operator $\varphi^a$ such that $\varphi^{a*}M\otimes_{\triangle^{[s_1p^{-ha},r_1p^{-ha}]}\widehat{\otimes}\triangle^{?'}} \triangle^{[s_1,r_1p^{-ha}]} \widehat{\otimes}\triangle^{?'}\overset{\sim}{\rightarrow}M\otimes \triangle^{[s_1,r_1p^{-ha}]}\widehat{\otimes}\triangle^{?'}$, carrying semilinear action from the Frobenius operator $\varphi^{'a}$ such that $\varphi^{'a*}M\otimes_{\triangle^?\widehat{\otimes}\triangle^{[s_2p^{-ha},r_2p^{-ha}]}} \triangle^?\widehat{\otimes}\triangle^{[s_2,r_2p^{-ha}]}\overset{\sim}{\rightarrow}M\otimes \triangle^?\widehat{\otimes}\triangle^{[s_2,r_2p^{-ha}]}$. For a corresponding object over $\triangle^\emptyset\widehat{\otimes}\triangle^\emptyset$, we assume the module is the base change from some module over $\triangle^{r_0}\widehat{\otimes}\triangle^{r'_0}$ for some $r_0,r_0'>0$, as mentioned above. 
\end{definition}

\begin{definition} \mbox{\bf{(After Kedlaya-Liu \cite[Definition 4.4.4]{KL2})}}
We define the corresponding pseudocoherent $\varphi^a$-$\varphi^{'a}$-modules over the corresponding period rings above, which we could use some uniform notation $\triangle^?\widehat{\otimes}\triangle^{?'},?,?'=\emptyset,I,r,\infty,\triangle=\widetilde{\Pi}_{R}$ to denote these. We define any pseudocoherent $\varphi^a$-$\varphi^{'a}$-module over the period rings with index $\emptyset$ above to be a pseudocoherent module over $\triangle^?\widehat{\otimes}\triangle^{?'}$ carrying semilinear action from the Frobenius operator $\varphi^a$ such that $\varphi^{a*}M\overset{\sim}{\rightarrow}M$ where $\varphi^a$ comes from the first factor, and carrying semilinear action from the Frobenius operator $\varphi^{'a}$ such that $\varphi^{'a*}M\overset{\sim}{\rightarrow}M$ where $\varphi^{'a}$ comes from the second factor, and we assume that this module descends to some $\triangle^r\widehat{\otimes}\triangle^{r'}$ for $0<r,r'<\infty$. We define any pseudocoherent $\varphi^a$-$\varphi^{'a}$-module over the period rings with index $r_1,r_2$ above to be a pseudocoherent module over $\triangle^?\widehat{\otimes}\triangle^{?'}$ carrying semilinear action from the Frobenius operator $\varphi^a$ such that $\varphi^{a*}M\overset{\sim}{\rightarrow}M\otimes \triangle^{r_1p^{-ah}}\widehat{\otimes}\triangle^{?'}$, carrying semilinear action from the Frobenius operator $\varphi^{'a}$ such that $\varphi^{'a*}M\overset{\sim}{\rightarrow}M\otimes \triangle^?\widehat{\otimes}\triangle^{r_2p^{-ah}}$. We define any pseudocoherent $\varphi^a$-$\varphi^{'a}$-module over the period rings with index $[s_1,r_1],[s_2,r_2]$ above to be a pseudocoherent module over $\triangle^?\widehat{\otimes}\triangle^{?'}$ carrying semilinear action from the Frobenius operator $\varphi^a$ such that $\varphi^{a*}M\otimes_{\triangle^{[s_1p^{-ha},r_1p^{-ha}]}\widehat{\otimes}\triangle^{?'}} \triangle^{[s_1,r_1p^{-ha}]} \widehat{\otimes}\triangle^{?'}\overset{\sim}{\rightarrow}M\otimes \triangle^{[s_1,r_1p^{-ha}]}\widehat{\otimes}\triangle^{?'}$, carrying semilinear action from the Frobenius operator $\varphi^{'a}$ such that $\varphi^{'a*}M\otimes_{\triangle^?\widehat{\otimes}\triangle^{[s_2p^{-ha},r_2p^{-ha}]}} \triangle^?\widehat{\otimes}\triangle^{[s_2,r_2p^{-ha}]}\overset{\sim}{\rightarrow}M\otimes \triangle^?\widehat{\otimes}\triangle^{[s_2,r_2p^{-ha}]}$. For a corresponding object over $\triangle^\emptyset\widehat{\otimes}\triangle^\emptyset$, we assume the module is the base change from some module over $\triangle^{r_0}\widehat{\otimes}\triangle^{r'_0}$ for some $r_0,r_0'>0$. As in \cite[Definition 4.4.4]{KL2} we impose the corresponding topological condition on the corresponding modules by imposing that all the modules are complete with respect to the corresponding natural topology, and over the Robba rings with respect to some specific multi-interval we assume that the corresponding modules are \'etale-stably pseudocoherent.
\end{definition}

\begin{theorem}
We have the following categories are equivalent predicted in \cite{CKZ}:	\\
1. The corresponding category of all the sheaves of pseudocoherent $\mathcal{O}_{\text{\'etale}}$-modules over the adic Fargues-Fontaine curve $Y_{\mathrm{FF},R,Y_{\mathrm{FF},R}}$ in the corresponding \'etale topology;\\
2. The corresponding category of all the sheaves of pseudocoherent $\mathcal{O}_{\text{pro-\'etale}}$-modules over the adic Fargues-Fontaine curve $Y_{\mathrm{FF},R,Y_{\mathrm{FF},R}}$ in the corresponding pro-\'etale topology;\\
3. The corresponding category of all the pseudocoherent modules over the period ring $\widetilde{\Pi}^{[s,r]}_{R}\widehat{\otimes}\widetilde{\Pi}^{[s',r']}_{R}$, carrying the corresponding partial Frobenius action from the operators $\varphi^a$ and $\varphi^{'a}$ from the first $R$ and the second $R$ respectively, here we assume that $0<s\leq r/p^{ah}<\infty$ and $0<s'\leq r'/p^{ah}<\infty$ (which are certainly assumed to be \'etale-stably pseudocoherent).\\
\end{theorem}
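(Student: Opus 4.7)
The plan is to reduce the statement to the preceding \cref{corollary3.12} (applied with $R' = R$) by reinterpreting the second coefficient copy of $Y_{\mathrm{FF},R}$ as an adic space covered by reified affinoid perfectoid subdomains, and matching the second partial Frobenius $\varphi^{'a}$ with the Frobenius descent data of this covering. Concretely, \cref{corollary3.12} already supplies the equivalence among categories (1), (2) and the various Frobenius $\varphi^a$-module/bundle categories over the period rings $\widetilde{\Pi}^{?}_{R, Y_{\mathrm{FF},R}}$; so the theorem reduces to identifying the category of finitely generated projective Frobenius $\varphi^a$-modules over $\widetilde{\Pi}^{[s,r]}_{R, Y_{\mathrm{FF},R}}$ with category (3) of pseudocoherent $\varphi^a$-$\varphi^{'a}$-modules over $\widetilde{\Pi}^{[s,r]}_R \widehat{\otimes} \widetilde{\Pi}^{[s',r']}_R$.

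First I would unwind the definition: the adic space $Y_{\mathrm{FF},R}$ is by construction covered by the reified affinoid perfectoid charts $\mathrm{Spra}(\widetilde{\Pi}^{[s',r']}_R, \widetilde{\Pi}^{[s',r'],\mathrm{Gr}}_R)$ as the interval $[s',r']$ ranges over $(0,\infty)$, and the structure sheaf evaluated on such a chart yields exactly the completed tensor product $\widetilde{\Pi}^{[s,r]}_R \widehat{\otimes}_E \widetilde{\Pi}^{[s',r']}_R$ under the product norm $\|.\|_{\alpha^r} \otimes \|.\|_{\widetilde{\Pi}^{[s',r']}_R}$. Thus a pseudocoherent $\varphi^a$-bundle over $\widetilde{\Pi}_{R, Y_{\mathrm{FF},R}}$, in the sense of the compatible-family definition given earlier, is nothing but a datum of modules $M_{[s',r']}$ over $\widetilde{\Pi}^{[s,r]}_R \widehat{\otimes} \widetilde{\Pi}^{[s',r']}_R$ with the first Frobenius $\varphi^a$-isomorphism, together with restriction and cocycle data as $[s',r']$ varies. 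The second partial Frobenius $\varphi^{'a}$ precisely encodes the Frobenius-periodicity of these covers: a $\varphi^{'a}$-semilinear isomorphism implements the identification between the module on the chart $[s',r']$ and the module on the shifted chart $[s'p^{-ah}, r'p^{-ah}]$ coming from the Frobenius of $Y_{\mathrm{FF},R}$, and iterating one recovers a coherent family on all intervals.

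The core technical step is then to show that an object of category (3) --- a single pseudocoherent module over $\widetilde{\Pi}^{[s,r]}_R \widehat{\otimes} \widetilde{\Pi}^{[s',r']}_R$ equipped with both partial Frobenius isomorphisms --- uniquely extends to such a compatible family. This is done exactly as in the proof of \cref{theorem3.11}: the $\varphi^{'a}$-isomorphism yields a canonical identification between the restriction to $\varphi^{'a\ast}$-pullbacks and the original module, so one can glue over all intervals $\{\varphi^{'ak}([s',r'])\}_{k \in \mathbb{Z}}$ and, by sheafiness of the Robba ring with coefficients in $\widetilde{\Pi}^{[s',r']}_R$, one obtains a pseudocoherent module over $\widetilde{\Pi}^{[s,r]}_R \widehat{\otimes} \widetilde{\Pi}^{r''}_R$ and then over $\widetilde{\Pi}^{[s,r]}_R \widehat{\otimes} \widetilde{\Pi}_R$; combined with the first $\varphi^a$ this produces the desired bundle after applying \cref{corollary3.12} in reverse.

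The main obstacle is not the conceptual path but the simultaneous sheafiness and pseudocoherence bookkeeping for the doubly-completed tensor product: one must verify that the partial Frobenius pullbacks along one factor preserve \'etale-stable pseudocoherence of modules over the tensor product, and that the Kiehl-type glueing used in \cite[Proposition~2.6.17]{KL2} still applies when the coefficient ring is itself a Robba ring rather than a perfectoid algebra. The former follows from flatness of $\varphi$ on both factors and the topological compatibility of the tensor product norms. The latter is handled by iterating the one-variable argument of \cref{theorem3.11}: fix an interval in the first variable, treat the second factor as a Banach coefficient, apply the perfectoid/preperfectoid version of \cref{theorem3.11} to globalise in the first variable, and then symmetrically swap the roles to globalise in the second. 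The verification that the two constructions are mutually inverse, as well as the preservation of pseudocoherence, proceeds verbatim as in \cite[Theorem 4.6.1]{KL2}.
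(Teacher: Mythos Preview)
Your proposal is correct and follows essentially the same route as the paper's own proof. Both arguments first reduce to the single-Frobenius equivalence by projecting to $\widetilde{\Pi}^{[s,r]}_{R,Y_{\mathrm{FF},R}}$-modules with the $\varphi^a$-action (you do this by invoking \cref{corollary3.12}, the paper by quasi-compactness of $Y_{\mathrm{FF},R}$ and the preperfectoid-coefficient version of \cref{theorem3.11}), and then both interpret the second partial Frobenius $\varphi^{'a}$ as the descent datum for the reified covering $\{\mathrm{Spra}(\widetilde{\Pi}^{[s'p^{-k'ha},r'p^{-k'ha}]}_R)\}_{k'}$ of the coefficient curve, glueing via \cite[Proposition~2.6.17]{KL2}; the paper phrases this last step as a two-parameter reified covering indexed by $(k,k')$, whereas you iterate the one-variable argument, but the content is identical.
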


\begin{proof}
From 1,2 to 3, we consider the corresponding projection to the subspace with respect to the first factor namely:
\begin{align}
\mathrm{Spa}(\widetilde{\Pi}^{[s,r]}_{R,Y_{\mathrm{FF},R}},\widetilde{\Pi}^{[s,r],+}_{R,Y_{\mathrm{FF},R}})\slash \varphi^\mathbb{Z}.	
\end{align}
By the quasi-compactness of $Y_{\mathrm{FF},R}$ we have that for some fixed nice $[s,r]$ the pseudocoherent $\mathcal{O}$-modules could be regarded in some equivalent way as pseudocoherent $\widetilde{\Pi}^{[s,r]}_{R,Y_{\mathrm{FF},R}}$-modules carrying the partial Frobenius action from $\varphi^a$ (namely the corresponding finiteness is preserved during this functor). Then consider the preperfectoid coefficients in $\widetilde{\Pi}^{[s,r]}_{R}$ we have that this category could be further related to the one of all the pseudocoherent $\widetilde{\Pi}^{[s,r]}_{R}\widehat{\otimes}\widetilde{\Pi}^{[s',r']}_{R}$-modules carrying the partial Frobenius action from $\varphi^a$ and the partial Frobenius action from $\varphi^{'a}$. We then start from any object in 3, we then use the corresponding reified adic space:
\begin{align}
\mathrm{Spra}(\widetilde{\Pi}^{[sp^{-kha},rp^{-kha}]}_{R}\widehat{\otimes}\widetilde{\Pi}^{[s'p^{-k'ha},r'p^{-k'ha}]}_{R},(\widetilde{\Pi}^{[sp^{-kha},rp^{-kha}]}_{R}\widehat{\otimes}\widetilde{\Pi}^{[s'p^{-k'ha},r'p^{-k'ha}]}_{R})^\mathrm{Gr})	
\end{align}
to cover the corresponding spaces showing the corresponding essential surjectivity of the functor from 1 to 3. Again we need to apply \cite[Proposition 2.6.17]{KL2} to obtain the finiteness around the global section.\\
\end{proof}


\newpage

\section{Sheafiness and $\infty$-Period Rings and Sheaves} \label{section4}

\subsection{The $\infty$-Period Rings and Sheaves}

\noindent Here we would like to mention something related to the corresponding sheafiness of the Robba rings in our context. Namely when we discuss the corresponding modules over period rings with big coefficients we might come across the issues around the sheafiness. However recent work \cite{BK} could let us get around the issue by just considering the $\infty$-adic space $(\mathrm{Spa}^h(S), \mathcal{O}^\mathrm{der}_{\mathrm{Spa}^h(S)})$ attached a Banach algebra $S$, which is just an $\infty$-analytic stack.

\indent Following \cite[Introduction]{BK} we define the following $\infty$-analytic space which are the corresponding coverings of the $\infty$-version of Fargues-Fontaine curves as those considered in \cite{KL1}, \cite{KL2} and \cite{T2}. Here we drop the corresponding sheafiness condition on the ring $\widetilde{\Pi}^{[s,r]}_{R,A}$ for any closed subinterval of $(0,\infty)$. Then we define:
\begin{displaymath}
X^\infty_{\mathrm{FF},R,A}:= \varinjlim_{0<s\leq r< \infty} \mathrm{Spa}^h(\widetilde{\Pi}^{[s,r]}_{R,A})	
\end{displaymath}
which carries a corresponding derived sheaves of $\infty$-rings, namely we have ringed $\infty$-adic space:
\begin{align}
(\varinjlim_{0<s\leq r< \infty} \mathrm{Spa}^h(\widetilde{\Pi}^{[s,r]}_{R,A}), \varprojlim_{0<s\leq r< \infty}\mathcal{O}^\mathrm{der}_{\mathrm{Spa}^h(\widetilde{\Pi}^{[s,r]}_{R,A})}).	
\end{align}

\begin{remark}
In \cite{BK}, there are two different notions of the corresponding $\infty$-Huber spectra attached to Banach algebras. We choose to focus on $(\mathrm{Spa}^h_\mathrm{Rat}(S), \mathcal{O}^\mathrm{der}_{\mathrm{Spa}_\mathrm{Rat}^h(S)})$ as our $(\mathrm{Spa}^h(S), \mathcal{O}^\mathrm{der}_{\mathrm{Spa}^h(S)})$ in this section. Recall the corresponding Tate acyclicity in this case still holds for derived standard rational localization.	
\end{remark}

\begin{setting}
Note (!) that in our current section we are going to use the notation $A$ to denote any commutative Banach uniform adic algebra over $\mathbb{Q}_p$.
\end{setting}

\begin{definition}
We define the corresponding $\infty$ deformed Fargues-Fontaine curves in general notation $Y^\infty_{\mathrm{FF},R,A}$ to be the corresponding quotient of the space by the $\mathbb{Z}$-power of the Frobenius operator which acts through the corresponding affinoids on the algebraic level.	
\end{definition}

\indent The presheaf of $\infty$-rings $\varprojlim_{0<s\leq r< \infty}\mathcal{O}^\mathrm{der}_{\mathrm{Spa}^h(\widetilde{\Pi}^{[s,r]}_{R,A})}$ are general limit of the corresponding sheaves $\mathcal{O}^\mathrm{der}_{\mathrm{Spa}^h(\widetilde{\Pi}^{[s,r]}_{R,A})}$, which provides upgrading of the discussion in \cite{T2}. So now we consider similarly as above the following $\infty$-adic spaces:

\begin{align}
\mathrm{Spa}^h\widetilde{\Pi}_{R,A},\mathrm{Spa}^h\widetilde{\Pi}^\infty_{R,A},\mathrm{Spa}^h\widetilde{\Pi}^I_{R,A},\mathrm{Spa}^h\widetilde{\Pi}^r_{R,A}.	
\end{align}

\indent Then take the corresponding global section we have the following $\infty$-period rings:

\begin{align}
\widetilde{\Pi}^h_{R,A},\widetilde{\Pi}^{\infty,h}_{R,A},\widetilde{\Pi}^{I,h}_{R,A},\widetilde{\Pi}^{r,h}_{R,A}.	
\end{align}

\begin{definition}
We define the corresponding Frobenius (on the $\infty$-rings) as in the corresponding non-derived situation through the corresponding affinoids on the algebraic level.	
\end{definition}



\begin{definition} \mbox{\bf{(After Lurie \cite[Corollary 7.2.2.9,Definition 7.2.2.10]{Lu1})}}
For general projective with finiteness condition we look at the following definition following \cite[Corollary 7.2.2.9]{Lu1}. We will use the notion a $f$-$projective$ module over a $\infty$-algebra $S$ to mean a projective object $M$ in the corresponding $\infty$-category of all the $S$-modules such that we have $\pi_0M$ is finite projective over $\pi_0 S$. 	
\end{definition}



\begin{definition} \mbox{\bf{(After Kedlaya-Liu \cite[Definition 4.4.4]{KL2})}}
We define the corresponding $\varphi^a$-modules over the corresponding $\infty$-period rings above, which we could use some uniform notation $\triangle^{?,h},?=\emptyset,I,r,\infty$ to denote these. We define any $\varphi^a$-module over the $\infty$-period rings with index $\emptyset$ above to be a $f$-projective module over $\triangle^{?,h}$ carrying semilinear action from the Frobenius operator $\varphi^a$ such that $\varphi^{a*}M\overset{\sim}{\rightarrow}M$. We define any $\varphi^a$-module over the derived period rings with index $r$ above to be a $f$-projective module over $\triangle^{?,h}$ carrying semilinear action from the Frobenius operator $\varphi^a$ such that $\varphi^{a*}M\overset{\sim}{\rightarrow}M\otimes \triangle^{rp^{-ah},h}$. We define any $\varphi^a$-module over the $\infty$-period rings with index $[s,r]$ above to be a $f$-projective module over $\triangle^{?,h}$ carrying semilinear action from the Frobenius operator $\varphi^a$ such that $\varphi^{a*}M\otimes \triangle^{[s,rp^{-ha}],h}\overset{\sim}{\rightarrow}M\otimes_{\triangle^{[sp^{-ha},rp^{-ha}],h}} \triangle^{[s,rp^{-ha}],h}$. Here $\triangle=\widetilde{\Pi}_{R,A}$. For a corresponding object over $\triangle^{\emptyset,h}$, we assume the module is the base changes from some module over $\triangle^{r_0,h}$ for some $r_0>0$.
\end{definition}

\begin{remark}
We believe that one can discuss more general objects such as the corresponding coherent sheaves after \cite{Lu1} and \cite{Lu2}, but at this moment let us just focus on the derived vector bundles.	
\end{remark}

\begin{definition} \mbox{\bf{(After Kedlaya-Liu \cite[Definition 4.4.6]{KL2})}}
We now define the corresponding $\varphi^a$-bundles over the corresponding $\infty$-period rings above, which we could use some uniform notation $\triangle_{*,A}^{?,h},?=\emptyset,I,r,\infty,*=R$ to denote these. We define a $\varphi^a$-bundle over $\triangle_{*,A}^{?,h},?=\emptyset,r$ to be a compatible family of $f$-projective $\varphi^a$-modules over $\triangle_{*,A}^{?,h},?=[s',r']$ for suitable $[s',r']$ (namely contained in $(0,r]$) satisfying the corresponding restriction compatibility and cocycle condition.
\end{definition}

\subsection{Some Results on the $\infty$-Descent}

\begin{conjecture} \mbox{\bf{(After Kedlaya-Liu \cite[Theorem 4.6.1]{KL2})}}
Consider the following categories:\\
1. The corresponding category of all the bundles over the homotopical period ring $\widetilde{\Pi}^h_{R,A}$, carrying the corresponding Frobenius action from the operator $\varphi^a$;\\
2. The corresponding category of all the $f$-projective modules over the homotopical period ring $\widetilde{\Pi}^{\infty,h}_{R,A}$, carrying the corresponding Frobenius action from the operator $\varphi^a$;\\
3. The corresponding category of all the $f$-projective modules over the homotopical period ring $\widetilde{\Pi}^{[s,r],h}_{R,A}$, carrying the corresponding Frobenius action from the operator $\varphi^a$, where $0<s\leq r/p^{ah}$.\\
Then we have that they are equivalent.
\end{conjecture}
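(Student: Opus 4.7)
The plan is to mimic the strategy used in the proof of \cref{theorem3.11}, replacing the classical sheafiness and Kiehl glueing over the (reified) adic spectra by the derived sheafiness of Bambozzi–Kremnizer on $(\mathrm{Spa}^h(\widetilde{\Pi}^{[s,r]}_{R,A}), \mathcal{O}^{\mathrm{der}})$, and upgrading the finite projective/pseudocoherent constraints to the $f$-projective ones prescribed by \cite{Lu1}. The three categories should then be compared pairwise through base change and Frobenius pullback, exactly as in the non-derived setting, but at the level of the $\infty$-categories of modules over the sheaves of $\infty$-rings $\mathcal{O}^{\mathrm{der}}$.

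First I would construct the functor from (3) to (1). Given an $f$-projective $\varphi^a$-module $M$ over $\widetilde{\Pi}^{[s,r],h}_{R,A}$ with $0 < s \le r/p^{ah}$, I use the Frobenius isomorphism $\varphi^{a*}M \otimes \triangle^{[s,rp^{-ha}],h} \overset{\sim}{\to} M \otimes_{\triangle^{[sp^{-ha},rp^{-ha}],h}} \triangle^{[s,rp^{-ha}],h}$ to produce, inductively in $k \ge 0$, compatible pullbacks on $\widetilde{\Pi}^{[sp^{-kah}, rp^{-kah}],h}_{R,A}$, hence a compatible family on the cover
\begin{align*}
\mathrm{Spa}^h(\widetilde{\Pi}^{[sp^{-kah}, rp^{-kah}]}_{R,A}), \qquad k \ge 0,
\end{align*}
of $X^\infty_{\mathrm{FF},R,A}$. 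Derived Tate acyclicity for standard rational coverings in $\mathrm{Spa}^h_{\mathrm{Rat}}$, as recorded in \cite{BK}, then glues these pieces into a global $f$-projective $\varphi^a$-bundle over $\widetilde{\Pi}^{h}_{R,A}$. The functor from (1) to (3) is just pullback/restriction to one interval, and (1) to (2) arises by taking the derived limit as $r \to \infty$ within $\mathrm{Spa}^h(\widetilde{\Pi}^{\infty}_{R,A})$; the reverse direction from (2) to (1) uses the assumption that such a module descends to some $\triangle^{r_0,h}$ and then applies Frobenius equivariance to propagate the descent outward in $r$, again glued with derived Tate acyclicity. One then checks fully faithfulness of each functor by a spectral sequence argument from the derived local-to-global sequence, noting that on $\pi_0$ we recover exactly the classical Kedlaya–Liu comparison of \cref{theorem3.11}.

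The main obstacle is not the formal manipulation above but the derived analog of \cite[Proposition 2.6.17]{KL2}, which is used in \cref{theorem3.11} to force global finiteness from local finiteness after Frobenius control. In the $\infty$-setting one needs to verify that the property of being $f$-projective (i.e.\ projective with $\pi_0$ finite projective over $\pi_0$) is preserved both under derived base change along $\widetilde{\Pi}^{[s,r],h}_{R,A} \to \widetilde{\Pi}^{[s',r'],h}_{R,A}$ and under the derived limits appearing in glueing. Equivalently, one has to ensure that the higher homotopy groups of the derived completed tensor products vanish on suitable overlaps, so that the derived Čech complex computes the naive one and $\pi_0$ remains finite projective globally. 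Pinning down this derived Kiehl-type statement inside the Bambozzi–Kremnizer framework (or, alternatively, inside the analytic stack formalism of \cite{CS}) is the content that prevents one from upgrading the statement from a conjecture to a theorem at this stage; once it is available, the remainder of the argument is a direct $\infty$-categorical transcription of \cref{theorem3.11}.
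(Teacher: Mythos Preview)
This statement is presented in the paper as a \emph{conjecture} and is not proved; the paper instead establishes two weakenings immediately afterward, one assuming each $\widetilde{\Pi}^{[s,r]}_{R,A}$ is stably uniform (so that the derived and classical structure sheaves coincide and one is literally back in the classical Kedlaya--Liu setting), and one (\cref{theorem4.12}) assuming the underlying modules are flat. Your write-up is therefore not a competing proof but an accurate diagnosis of why the statement remains conjectural, and you say so yourself in the final paragraph.

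Your identification of the obstacle matches the paper's implicit stance: the missing ingredient is precisely a derived version of the Kiehl-type glueing input \cite[Proposition 2.6.17]{KL2} ensuring that $f$-projectivity survives the derived limit along overlaps. The paper's two partial results circumvent exactly this point --- stable uniformity kills the higher homotopy so the classical argument runs unchanged, while the flatness hypothesis in \cref{theorem4.12} lets one argue entirely on $\pi_0$ and invoke \cite[Theorem 1.3.9]{KL1}. The remark after \cref{theorem4.12} records that the author expects the full conjecture to hold on the strength of Kedlaya's improvement of \cite[Theorem 1.3.9(b)]{KL1} (reproduced as \cref{proposition5.11con}), but does not carry this through in the $\infty$-context. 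In short, your outline and the paper are aligned both in strategy and in the location of the gap; neither closes it.
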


\indent In the corresponding sheafy case, we do have some good comparison:

\begin{theorem} \mbox{\bf{(After Kedlaya-Liu \cite[Theorem 4.6.1]{KL2})}}
In the situation where the Robba ring $\widetilde{\Pi}^{[s,r]}_{R,A}$ for any closed interval $[s,r]$ is stably uniform. Consider the following categories:\\
1. The corresponding category of all the bundles over the homotopical period ring $\widetilde{\Pi}^h_{R,A}$, carrying the corresponding Frobenius action from the operator $\varphi^a$;\\
2. The corresponding category of all the $f$-projective modules over the homotopical period ring $\widetilde{\Pi}^{\infty,h}_{R,A}$, carrying the corresponding Frobenius action from the operator $\varphi^a$;\\
3. The corresponding category of all the $f$-projective modules over the homotopical period ring $\widetilde{\Pi}^{[s,r],h}_{R,A}$, carrying the corresponding Frobenius action from the operator $\varphi^a$, where $0<s\leq r/p^{ah}$.
Then we have that they are equivalent.
\end{theorem}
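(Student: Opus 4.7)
The plan is to reduce the entire statement to the classical equivalence of Theorem~\ref{theorem3.11} (in its finite projective specialisation, as in \cite[Theorem 4.6.1]{KL2}) by exploiting the stably uniform hypothesis to collapse the Bambozzi-Kremnizer derived structure back to degree zero.

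First I would observe that under the stably uniform assumption on $\widetilde{\Pi}^{[s,r]}_{R,A}$, the derived Tate acyclicity of \cite{BK} degenerates to ordinary Tate acyclicity on rational localizations, since the classical \v{C}ech-Amitsur complex is already acyclic and therefore coincides with its derived analogue. It follows that $\mathcal{O}^\mathrm{der}_{\mathrm{Spa}^h(\widetilde{\Pi}^{[s,r]}_{R,A})}$ has vanishing $\pi_i$ for $i\geq 1$ on every rational open, so the $\infty$-period ring $\widetilde{\Pi}^{[s,r],h}_{R,A}$ is discrete with its $\pi_0$ equal to the ordinary Robba ring $\widetilde{\Pi}^{[s,r]}_{R,A}$. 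The same discreteness should then propagate to $\widetilde{\Pi}^{r,h}_{R,A}$, $\widetilde{\Pi}^{\infty,h}_{R,A}$ and $\widetilde{\Pi}^h_{R,A}$ upon taking Fr\'echet and ind-Fr\'echet limits.

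Once the homotopical period rings are discrete, a standard argument with the Tor spectral sequence shows that any $f$-projective $\infty$-module over them is itself discrete: by definition $\pi_0 M$ is finite projective over $\pi_0 S$, and projectivity in the $\infty$-category forces the Postnikov tower of $M$ to split. Consequently each of the three $\infty$-categories in the statement is equivalent to its classical analogue, namely finite projective Frobenius bundles over $\widetilde{\Pi}_{R,A}$, finite projective Frobenius modules over $\widetilde{\Pi}^{\infty}_{R,A}$, and finite projective Frobenius modules over $\widetilde{\Pi}^{[s,r]}_{R,A}$, where the derived semilinear Frobenius structure is pinned down up to a contractible space of coherences by its underlying classical action on $\pi_0$. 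The equivalence of these three ordinary categories is then provided by Theorem~\ref{theorem3.11}, with glueing from $[s,r]$-modules back to bundles proceeding via Frobenius pullback over the reified covers
\begin{align}
\mathrm{Spra}\bigl(\widetilde{\Pi}_{R,A}^{[sp^{-kah},rp^{-kah}]},\widetilde{\Pi}_{R,A}^{[sp^{-kah},rp^{-kah}],\mathrm{Gr}}\bigr),\qquad k\geq 0,
\end{align}
exactly as in the previous proofs in Section~\ref{section3.2}.

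The hard part will be the discreteness of the limit rings $\widetilde{\Pi}^{\infty,h}_{R,A}$ and $\widetilde{\Pi}^h_{R,A}$: homotopy limits of discrete $\infty$-rings are not automatically discrete, and one must control the $\varprojlim^1$-terms coming from the Schauder basis presentation of the Fr\'echet and ind-Fr\'echet structure. I expect this to reduce to a Mittag-Leffler type condition on the transition maps, which in turn follows from the density results for the bounded Robba ring established in the first proposition of Section~2.2 and in \cite[Lemma 5.2.6]{KL1}. Once that vanishing is granted, the remainder of the argument is a formal transport along the equivalence between discrete $\infty$-rings equipped with $f$-projective modules and ordinary rings equipped with finite projective modules.
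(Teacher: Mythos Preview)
Your approach is essentially the paper's: both reduce to the classical equivalence by using the stably uniform hypothesis to make the derived structure collapse, then invoke the Kedlaya--Liu glueing machinery. The paper compresses your first two paragraphs into the single sentence ``This amounts to the corresponding statement for modules over classical rings'' and then runs the classical argument in place (reified covers $\mathrm{Spra}(\widetilde{\Pi}_{R,A}^{[sp^{-kah},rp^{-kah}]},\ldots)$, Frobenius pullback, \cite[Proposition 2.6.17, Corollary 2.6.10]{KL2} for $1\leftrightarrow 2$, and \cite[Theorem 1.3.9]{KL1} on $\pi_0$ for $3\to 1$), rather than citing an earlier theorem.

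Two small remarks. First, your appeal to Theorem~\ref{theorem3.11} is not literally valid: that theorem assumes $A$ perfectoid, whereas in this section $A$ is an arbitrary Banach uniform adic algebra over $\mathbb{Q}_p$. What survives is the \emph{proof}, which only needs sheafiness of the interval Robba rings---and that is precisely what stably uniform supplies. You do say this implicitly when you invoke the reified covers ``as in Section~\ref{section3.2}'', so this is a citation imprecision rather than a gap. Second, your $\varprojlim^1$ worry is largely unnecessary for the statement at hand. By Lurie's characterisation \cite[Corollary 7.2.2.9]{Lu1}, the $\infty$-category of $f$-projective modules over any connective $S$ is equivalent, via $\pi_0$, to finite projective $\pi_0 S$-modules; so you never need full discreteness of $\widetilde{\Pi}^{\infty,h}_{R,A}$ or $\widetilde{\Pi}^{h}_{R,A}$, only the identification of their $\pi_0$. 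The paper in fact sidesteps this entirely by working on $\pi_0$ of the interval pieces directly and glueing there, which is why no Mittag--Leffler argument appears.
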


\begin{proof}
This amounts to the corresponding statement for modules over classical rings. The proof is parallel to \cite[Theorem 4.6.1]{KL2}. Then to glue a bundle to get a module over the Robba ring over the corresponding Robba ring $\widetilde{\Pi}_{R,A}^\infty$, we use the corresponding reified adic space $\mathrm{Spra}(\widetilde{\Pi}_{R,A}^{[sp^{-kah},rp^{-kah}]},\widetilde{\Pi}_{R,A}^{[sp^{-kah},rp^{-kah}],\mathrm{Gr}})$ to cover the corresponding whole spaces, and use the corresponding Frobenius pullback to basically control the corresponding finiteness of the corresponding sections over each member in this covering, then we can derive the result from \cite[Proposition 2.6.17, Corollary 2.6.10]{KL2}. This will show the equivalence between 1 and 2. Then the functor from 1 to 3 is just the corresponding projection. On the other hand the functor from 3 to 1, we use the corresponding Frobenius to reach any interval taking the form of $[sp^{-nha},rp^{-nha}]$ for any $n\in \mathbb{Z}$, then we still have to consider the corresponding extraction of a single module from two over some overlapped such specific intervals coming from the Frobenius. However this is achievable since for $\pi_0$ of the corresponding two modules we are done through \cite[Theorem 1.3.9]{KL1}. 	
\end{proof}

\begin{remark}
The stably-uniform condition here relates directly to the construction of \cite{BK}. 	
\end{remark}

\begin{theorem} \mbox{\bf{(After Kedlaya-Liu \cite[Theorem 4.6.1]{KL2})}} \label{theorem4.12}
Consider the following categories:\\
1. The corresponding category of all the bundles over the homotopical period ring $\widetilde{\Pi}^h_{R,A}$, carrying the corresponding Frobenius action from the operator $\varphi^a$, such that the corresponding underlying modules are plat (here we assume that the modules are $f$-projective);\\
2. The corresponding category of all the $f$-projective modules over the homotopical period ring $\widetilde{\Pi}^{[s,r],h}_{R,A}$, carrying the corresponding Frobenius action from the operator $\varphi^a$, where $0<s\leq r/p^{ah}$, such that the corresponding underlying modules are plat.\\
Then we have that they are equivalent.
\end{theorem}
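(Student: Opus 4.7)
The plan is to follow the template of the preceding (stably uniform) theorem while trading the stable-uniformity hypothesis on $\widetilde{\Pi}^{[s,r]}_{R,A}$ for the flatness hypothesis placed on the modules themselves. The functor from category 1 to category 2 is restriction, realized by derived base change along $\widetilde{\Pi}^{h}_{R,A} \to \widetilde{\Pi}^{[s,r],h}_{R,A}$; flatness is preserved by this base change and $f$-projectivity descends to $f$-projectivity by the definition, so the functor lands in category 2.

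For the inverse, starting with a flat, $f$-projective $\varphi^a$-module $M$ over $\widetilde{\Pi}^{[s,r],h}_{R,A}$, I would first use the Frobenius isomorphism to spread $M$ to a compatible family of modules over each $\widetilde{\Pi}^{[sp^{-kah}, rp^{-kah}],h}_{R,A}$ for all $k \in \mathbb{Z}$, exactly as in the sheafy case; this produces sections over the reified cover $\mathrm{Spra}(\widetilde{\Pi}_{R,A}^{[sp^{-kah}, rp^{-kah}]}, \widetilde{\Pi}_{R,A}^{[sp^{-kah}, rp^{-kah}], \mathrm{Gr}})$ inside the $\infty$-adic space $\mathrm{Spa}^h(\widetilde{\Pi}^{h}_{R,A})$. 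The central step is to glue this family into a $\varphi^a$-bundle over $\widetilde{\Pi}^{h}_{R,A}$. Without stable uniformity we cannot invoke classical Tate acyclicity, so we instead appeal to the derived Tate acyclicity of \cite{BK} valid in full generality for the $\infty$-Huber spectrum. Flatness of the underlying modules is precisely the input needed to collapse the derived \v{C}ech complex to its classical counterpart: derived tensor products with flat modules coincide with underived ones, so $\pi_0$ of the descended $\infty$-module equals the classical gluing and no higher homotopy is introduced. The descent problem on $\pi_0$ then reduces to \cite[Theorem 1.3.9]{KL1}, yielding a finite projective module over the glued ring; combined with the vanishing of $\pi_i$ for $i \geq 1$ this upgrades to an $f$-projective $\varphi^a$-bundle in category 1.

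The main obstacle I anticipate is verifying the precise interaction between the derived sheafiness of \cite{BK}, the Frobenius-indexed transition data, and the flatness assumption: one has to check that flatness propagates through the homotopy limit along the reified cover so that every term of the derived \v{C}ech complex contributes only in degree zero, while simultaneously ensuring that the Frobenius equivariance is preserved by the whole derived descent machinery rather than only up to coherent homotopy. Once this technical compatibility is settled, the equivalence on morphisms follows by running the same argument on $\mathrm{Hom}$-complexes, since flatness again forces the relevant derived internal homs into degree zero; and one invokes \cite[Proposition 2.6.17]{KL2} as in the preceding theorem to obtain the finiteness of global sections and close the argument.
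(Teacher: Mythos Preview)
Your proposal is correct and follows essentially the same route as the paper's proof: projection for the forward functor, Frobenius spreading to the shifted intervals $[sp^{-kah},rp^{-kah}]$ for the inverse, and gluing on overlaps by applying \cite[Theorem 1.3.9]{KL1} to $\pi_0$, with the derived sheafiness of \cite{BK} standing in for classical Tate acyclicity. The paper's argument is terser and does not spell out the role of flatness or invoke \cite[Proposition 2.6.17]{KL2} (the latter is not needed here, since category 1 is by definition a compatible family rather than a single module over $\widetilde{\Pi}^{\infty,h}_{R,A}$), but the core logic is identical.
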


\begin{proof}
Due to the fact that the $\infty$-presheaf $\mathcal{O}^\mathrm{der}_{\mathrm{Spa}^h(\widetilde{\Pi}^{I,h}_{R,A})}$ is a $\infty$-sheaf, the proof reduces to \cite[Theorem 4.6.1]{KL2} which we also performed in \cite[Section 4.2]{T2}. To compare, the functor from 1 to 2 is just the corresponding projection. On the other hand the functor from 2 to 1, we use the corresponding Frobenius to reach any interval taking the form of $[sp^{-nha},rp^{-nha}]$ for any $n\in \mathbb{Z}$, then we still have to consider the corresponding extraction of a single module from two over some overlapped such specific intervals coming from the Frobenius. However this is achievable since for $\pi_0$ of the corresponding two modules we are done through \cite[Theorem 1.3.9]{KL1}.
\end{proof}

\begin{remark}
Professor Kedlaya has informed us that the condition on the spectrum in \cite[Theorem 1.3.9 (b)]{KL1} could be removed (see our amplification in \cref{section5} below namely \cref{proposition5.11con}). This made us believe above holds. We want to mention here that since as mentioned in \cite{BK} the authors of \cite{BK} believe there is deep relationship between \cite{BK} and Clausen-Scholze's work \cite{CS}. Therefore we believe there is deep relationship between ours and the possible ones after Clausen-Scholze \cite{CS}. 
\end{remark}

\newpage

\section{Noncommutative Descent Revisit} \label{section5}

\subsection{Big Noncommutative Coefficients}

\noindent We now also consider the corresponding more general noncommutative Banach coefficients as in our previous work \cite{T2}, which is targeted at the corresponding $K$-theory of the general Robba rings.

\begin{setting}
Note (!) that in our current section we are going to use the notation $B$ to denote any Banach algebra over $E$.
\end{setting}

\begin{definition} \mbox{\bf{(After Kedlaya-Liu \cite[Definition 4.1.1]{KL2})}}
We let $B$ be a Banach algebra over $E$ with integral subring $\mathcal{O}_B$ over $\mathcal{O}_E$. Recall from the corresponding context in \cite{KL1} we have the corresponding period rings in the relative setting. We follow the corresponding notations we used in \cite[Section 2.1]{T2} for those corresponding period rings. We first have for a pair $(R,R^+)$ where $R$ is a uniform perfect adic Banach ring over the assumed base $\mathcal{O}_F$. Then we take the corresponding generalized Witt vectors taking the form of $W_{\mathcal{O}_E}(R)$, which is just the ring $\widetilde{\Omega}_R^\mathrm{int}$, and by inverting the corresponding uniformizer we have the ring $\widetilde{\Omega}_R$, then by taking the completed product with $B$ we have $\widetilde{\Omega}_{R,B}$. Now for some $r>0$ we consider the ring $\widetilde{\Pi}^{\mathrm{int},r}_{R}$ which is the completion of $W_{\mathcal{O}_E}(R^+)[[r]:r\in R] $ by the norm $\|.\|_{\alpha^r}$ defined by:
\begin{align}
\|.\|_{\alpha^r}(\sum_{n\geq 0}\pi^n[\overline{r}_n])=\sup_{n\geq 0}\{p^{-n}\alpha(\overline{r}_n)^r\}.	
\end{align}
Then we have the product $\widetilde{\Pi}^{\mathrm{bd},r}_{R,B}$ defined as the completion under the product norm $\|.\|_{\alpha^r}\otimes \|.\|_B$ of the corresponding ring $\widetilde{\Pi}^{\mathrm{bd},r}_{R}\otimes_{\mathbb{Q}_p}B$, which could be also defined from the corresponding integral Robba rings defined above. Then we define the corresponding Robba ring for some interval $I\subset (0,\infty)$ with coefficient in the perfectoid ring $B$ denoted by $\widetilde{\Pi}^{I}_{R,B}$ as the following complete tensor product:
\begin{displaymath}
\widetilde{\Pi}^{I}_{R}\widehat{\otimes}_{\mathbb{Q}_p} B	
\end{displaymath}
under the the corresponding tensor product norm $\|.\|_{\alpha^r}\otimes \|.\|_B$. Then we set $\widetilde{\Pi}^r_{R,B}$ as $\varprojlim_{s\rightarrow 0}\widetilde{\Pi}^{[s,r]}_{R,B}$, and then we define $\widetilde{\Pi}_{R,B}$ as $\varinjlim_{r\rightarrow \infty}\widetilde{\Pi}^{[s,r]}_{R,B}$, and we define $\widetilde{\Pi}^\infty_{R,B}$ as $\varprojlim_{r\rightarrow \infty}\widetilde{\Pi}^{r}_{R,B}$. And we also have the corresponding full integral Robba ring and the corresponding full bounded Robba ring by taking the corresponding union through all $r>0$.
\end{definition}

\indent We first define the corresponding right Frobenius modules and bundles:

\begin{definition} \mbox{\bf{(After Kedlaya-Liu \cite[Definition 4.4.4]{KL2})}}
We define the corresponding right $\varphi^a$-modules over the corresponding period rings above, which we could use some uniform notation $\triangle^?,?=\emptyset,I,r,\infty,\triangle=\widetilde{\Pi}_{R,B}$ to denote these. We define any right $\varphi^a$-module over the period rings with index $\empty$ above to be a finitely generated projective right module over $\triangle^?$ carrying semilinear action from the Frobenius operator $\varphi^a$ such that $\varphi^{a*}M\overset{\sim}{\rightarrow}M$. We define any right $\varphi^a$-module over the period rings with index $r$ above to be a finitely generated projective right module over $\triangle^?$ carrying semilinear action from the Frobenius operator $\varphi^a$ such that $\varphi^{a*}M\overset{\sim}{\rightarrow}M\otimes \triangle^{rp^{-ah}}$. We define any right $\varphi^a$-module over the period rings with index $[s,r]$ above to be a finitely generated projective right module over $\triangle^?$ carrying semilinear action from the Frobenius operator $\varphi^a$ such that $\varphi^{a*}M\otimes_{\triangle^{[sp^{-ha},rp^{-ha}]}} \triangle^{[s,rp^{-ha}]}\overset{\sim}{\rightarrow}M\otimes \triangle^{[s,rp^{-ha}]}$. For a corresponding object over $\triangle^\emptyset$, we assume the module is the base changes from some module over $\triangle^{r_0}$ for some $r_0$.
\end{definition}

\begin{definition} \mbox{\bf{(After Kedlaya-Liu \cite[Definition 4.4.6]{KL2})}}
We now define the corresponding right $\varphi^a$-bundles over the corresponding period rings above, which we could use some uniform notation $\triangle_{*,B}^?,?=\emptyset,I,r,\infty,*=R$ to denote these. We define a right $\varphi^a$-bundle over $\triangle_{*,B}^?,?=\emptyset,r$ to be a compatible family of finitely generated projective right $\varphi^a$-modules over $\triangle_{*,B}^?,?=[s',r']$ for suitable $[s',r']$ (namely contained in $(0,r]$) satisfying the corresponding restriction compatibility and cocycle condition.
\end{definition}

\indent We first define the corresponding left Frobenius modules and bundles:

\begin{definition} \mbox{\bf{(After Kedlaya-Liu \cite[Definition 4.4.4]{KL2})}}
We define the corresponding left $\varphi^a$-modules over the corresponding period rings above, which we could use some uniform notation $\triangle^?,?=\emptyset,I,r,\infty,\triangle=\widetilde{\Pi}_{R,B}$ to denote these. We define any left $\varphi^a$-module over the period rings with index $\empty$ above to be a finitely generated projective left module over $\triangle^?$ carrying semilinear action from the Frobenius operator $\varphi^a$ such that $\varphi^{a*}M\overset{\sim}{\rightarrow}M$. We define any left $\varphi^a$-module over the period rings with index $r$ above to be a finitely generated projective left module over $\triangle^?$ carrying semilinear action from the Frobenius operator $\varphi^a$ such that $\varphi^{a*}M\overset{\sim}{\rightarrow} \triangle^{rp^{-ah}}\otimes M$. We define any left $\varphi^a$-module over the period rings with index $[s,r]$ above to be a finitely generated projective left module over $\triangle^?$ carrying semilinear action from the Frobenius operator $\varphi^a$ such that $\triangle^{[s,rp^{-ha}]} \otimes_{\triangle^{[sp^{-ha},rp^{-ha}]}} \varphi^{a*}M \overset{\sim}{\rightarrow} \triangle^{[s,rp^{-ha}]} \otimes M$. For a corresponding object over $\triangle^{\emptyset}$, we assume the module is the base changes from some module over $\triangle^{r_0}$ for some $r_0>0$.

\end{definition}

\begin{definition} \mbox{\bf{(After Kedlaya-Liu \cite[Definition 4.4.6]{KL2})}}
We now define the corresponding left $\varphi^a$-bundles over the corresponding period rings above, which we could use some uniform notation $\triangle_{*,B}^?,?=\emptyset,I,r,\infty,*=R$ to denote these. We define a left $\varphi^a$-bundle over $\triangle_{*,B}^?,?=\emptyset,r$ to be a compatible family of finitely generated projective left $\varphi^a$-modules over $\triangle_{*,B}^?,?=[s',r']$ for suitable $[s',r']$ (namely contained in $(0,r]$) satisfying the corresponding restriction compatibility and cocycle condition.
\end{definition}

\indent We first define the corresponding Frobenius bimodules and bibundles:

\begin{definition} \mbox{\bf{(After Kedlaya-Liu \cite[Definition 4.4.4]{KL2})}}
We define the corresponding $\varphi^a$-bimodules over the corresponding period rings above, which we could use some uniform notation $\triangle^?,?=\emptyset,I,r,\infty,\triangle=\widetilde{\Pi}_{R,B}$ to denote these. We define any $\varphi^a$-bimodule over the period rings with index $\empty$ above to be a finitely generated projective bimodule over $\triangle^?$ carrying semilinear action from the Frobenius operator $\varphi^a$ such that $\varphi^{a*}M\overset{\sim}{\rightarrow}M$. We define any $\varphi^a$-bimodule over the period rings with index $r$ above to be a finitely generated projective bimodule over $\triangle^?$ carrying semilinear action from the Frobenius operator $\varphi^a$ such that $\varphi^{a*}M\overset{\sim}{\rightarrow} \triangle^{rp^{-ah}}\otimes M$ and $\varphi^{a*}M\overset{\sim}{\rightarrow}M\otimes \triangle^{rp^{-ah}}$. We define any $\varphi^a$-bimodule over the period rings with index $[s,r]$ above to be a finitely generated projective bimodule over $\triangle^?$ carrying semilinear action from the Frobenius operator $\varphi^a$ such that $\triangle^{[s,rp^{-ha}]} \otimes_{\triangle^{[sp^{-ha},rp^{-ha}]}} \varphi^{a*}M \overset{\sim}{\rightarrow} \triangle^{[s,rp^{-ha}]} \otimes M$ and $\varphi^{a*}M\otimes_{\triangle^{[sp^{-ha},rp^{-ha}]}} \triangle^{[s,rp^{-ha}]}\overset{\sim}{\rightarrow}M\otimes \triangle^{[s,rp^{-ha}]}$. For a corresponding object over $\triangle^{\emptyset}$, we assume the module is the base changes from some module over $\triangle^{r_0}$ for some $r_0>0$.

\end{definition}

\begin{definition} \mbox{\bf{(After Kedlaya-Liu \cite[Definition 4.4.6]{KL2})}}
We now define the corresponding $\varphi^a$-bibundles over the corresponding period rings above, which we could use some uniform notation $\triangle_{*,B}^?,?=\emptyset,I,r,\infty,*=R$ to denote these. We define a $\varphi^a$-bibundle over $\triangle_{*,B}^?,?=\emptyset,r$ to be a compatible family of finitely generated projective $\varphi^a$-bimodules over $\triangle_{*,B}^?,?=[s',r']$ for suitable $[s',r']$ (namely contained in $(0,r]$) satisfying the corresponding restriction compatibility and cocycle condition.\\
\end{definition}

\subsection{Glueing Noncommutative Vector Bundles}

\indent The following is what we achieved in the corresponding paper \cite[Lemma 6.82]{T2}:

\begin{proposition}
Consider the following exact sequence of Banach algebras satisfying the corresponding conditions in \cite[Definition 2.7.3 (a),(b)]{KL1}:
\begin{align}
0\rightarrow {\Pi}\rightarrow \Pi_1\bigoplus \Pi_2\rightarrow \Pi_{12}\rightarrow 0.	
\end{align}
And consider the corresponding right glueing datum $(M_1,M_2,M_{12})$ over the corresponding rings above, and we assume that the glueing datum is right finite projective. Then we have that the corresponding kernel $M$ of
\begin{align}
M_1\bigoplus M_2\rightarrow M_{12}	
\end{align}
as right module over $\Pi$ is finitely presented, with the corresponding isomorphisms:
\begin{align}
M\otimes \Pi_1 \overset{\sim}{\rightarrow}	M_1,\\
M\otimes \Pi_2 \overset{\sim}{\rightarrow}	M_2.
\end{align}

\end{proposition}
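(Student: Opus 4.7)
The strategy is to adapt the commutative Kiehl-glueing argument of Kedlaya-Liu \cite[Lemma 2.7.4, Lemma 2.7.5]{KL1}, which is what underlies the proof in \cite{T2}, to the one-sided (right) module setting. The crucial observation is that on the module side everything is one-sided (a finite projective right module tensored over $\Pi$ with $\Pi_i$ remains a right $\Pi_i$-module), and on the ring side conditions (a) and (b) of \cite[Definition 2.7.3]{KL1} only assert strict exactness of the square and a quantitative approximation estimate for the ring sequence itself; neither uses commutativity. The natural definition $M := \ker(M_1 \oplus M_2 \to M_{12})$ is thus a right $\Pi$-module, and the projections $M \to M_i$ yield canonical maps $M \otimes_\Pi \Pi_i \to M_i$ whose isomorphy is the content of the proposition.

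To establish the isomorphisms I would first use the fact that each $M_i$ is right finitely generated projective to pick, after stably enlarging by adding finite free right summands, isomorphisms $M_i \cong \Pi_i^{n}$ for a common rank $n$; under this identification the gluing datum $M_1 \otimes \Pi_{12} \cong M_{12} \cong M_2 \otimes \Pi_{12}$ is encoded by an invertible matrix $U \in \mathrm{GL}_n(\Pi_{12})$ acting on the left of column vectors. The heart of the proof is then a noncommutative matrix factorization: one must show that $U$ can be written as $U = U_1 U_2^{-1}$ with $U_i \in \mathrm{GL}_n(\Pi_i)$. Granted such a factorization, the right $\Pi$-module spanned inside $M_1 \oplus M_2$ by the $n$ paired columns coming from $U_1$ and $U_2$ exhibits $M$ as finite free of rank $n$, from which $M \otimes_\Pi \Pi_i \cong M_i$ and the finite presentation of $M$ are immediate; undoing the stable extension then recovers the projective case.

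The main obstacle is therefore the factorization $U = U_1 U_2^{-1}$, which in \cite{KL1} is produced by an iterative approximation scheme powered exactly by conditions (a) and (b) of \cite[Definition 2.7.3]{KL1}: condition (a) provides strict exactness of the ring sequence and (b) supplies a bounded splitting with geometric decay, so that successive Newton-type corrections converge in the Banach topology. The only modification required in the noncommutative setting is to keep strict track of multiplication order, since matrix entries no longer commute; however, the iterative scheme can be written entirely as left multiplication by matrices drawn alternately from $\mathrm{GL}_n(\Pi_1)$ and $\mathrm{GL}_n(\Pi_2)$, so the convergence argument goes through verbatim using submultiplicativity of the operator Banach norms. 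I expect no further genuine difficulty beyond this factorization step, and the resulting proof should run parallel to \cite[Lemma 6.82]{T2} with the noncommutativity tracked only through the order of multiplications.
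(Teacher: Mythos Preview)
Your proposal is correct and matches the paper's proof, which is simply a citation to \cite[Lemma 6.82]{T2}---itself the noncommutative version of the Kedlaya--Liu approximation argument you sketch, with multiplication order tracked exactly as you indicate. One small refinement worth flagging: the reduction to the free case is not literally ``add finite free summands to make each $M_i$ free'' (finite projective modules over a general Banach ring need not be stably free), but rather embed the entire glueing datum $(M_1,M_2,M_{12})$ as a direct summand of a free glueing datum $(\Pi_1^N,\Pi_2^N,\Pi_{12}^N)$ for a suitable $N$ and transition matrix, which one checks is always possible by splicing together the splittings $M_i \hookrightarrow \Pi_i^{n_i}$; with this adjustment your matrix-factorization scheme goes through verbatim and in fact yields finite projectivity of $M$, stronger than the finitely-presented conclusion actually asserted.
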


\begin{proof}
This is essentially proved in \cite[Lemma 6.82]{T2}.	
\end{proof}

\indent By considering the parallel argument one has the following:

\begin{proposition}
Consider the following exact sequence of Banach algebras satisfying the corresponding conditions in \cite[Definition 2.7.3 (a),(b)]{KL1}:
\begin{align}
0\rightarrow {\Pi}\rightarrow \Pi_1\bigoplus \Pi_2\rightarrow \Pi_{12}\rightarrow 0.	
\end{align}
And consider the corresponding left glueing datum $(M_1,M_2,M_{12})$ over the corresponding rings above, and we assume that the glueing datum is left finite projective. Then we have that the corresponding kernel $M$ of
\begin{align}
M_1\bigoplus M_2\rightarrow M_{12}	
\end{align}
as left module over $\Pi$ is finitely presented, with the corresponding isomorphisms:
\begin{align}
M\otimes \Pi_1 \overset{\sim}{\rightarrow}	M_1,\\
M\otimes \Pi_2 \overset{\sim}{\rightarrow}	M_2.
\end{align}

\end{proposition}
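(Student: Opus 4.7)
The plan is to transport the proof of the preceding right-module proposition (essentially \cite[Lemma 6.82]{T2}) verbatim to the left-module setting via a symmetry argument. The conditions in \cite[Definition 2.7.3 (a),(b)]{KL1} concern norm estimates on the exact sequence
\begin{align}
0\rightarrow {\Pi}\rightarrow \Pi_1\oplus \Pi_2\rightarrow \Pi_{12}\rightarrow 0
\end{align}
together with the existence of a bounded approximate splitting; these conditions only involve the underlying Banach space structure and the two-sided multiplicative compatibility, so they are invariant under passing from right to left module structures. Hence the same Banach-theoretic glueing estimates used in the right-module proof apply here.

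First I would define $M:=\ker(M_1\oplus M_2\to M_{12})$ as a left $\Pi$-module and construct the candidate maps $M\otimes_\Pi \Pi_i\to M_i$ as restrictions of the projections, using here left finite projectivity to embed each $M_i$ as a direct summand of some $\Pi_i^{n_i}$. This reduces the problem, via the chosen dual bases, to the free case, where the statement is the classical Tate-acyclicity type sheaf condition for $\Pi$ now in place of $M$.

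The main technical step is the iterative approximation: given any $x_i\in M_i$, produce sequences in $M$ whose images in $M_i$ converge to $x_i$ in the Banach norm, using the bounded approximate splitting of $\Pi_1\oplus\Pi_2\twoheadrightarrow\Pi_{12}$ from \cite[Definition 2.7.3 (b)]{KL1} applied coordinatewise through the chosen embeddings into $\Pi_i^{n_i}$. This yields the surjectivity of $M\otimes_\Pi \Pi_i\to M_i$, and injectivity follows from the flatness inherent in the direct-summand description together with the exactness of the glueing sequence. Finite presentation of $M$ over $\Pi$ is then extracted from the finitely many generators obtained from the finite projective presentations of $M_1$ and $M_2$ combined with this approximation.

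The only real obstacle is bookkeeping: the algebra $\Pi$ is noncommutative, so one must verify that the approximation and splitting estimates use only the Banach structure and the \emph{two-sided} multiplicativity of the norm, not any one-sided preference. Since the Kedlaya-Liu axioms are formulated for the ambient Banach algebras (not for a one-sided module structure), this transfer is automatic, and the argument therefore reduces to the parallel claim already established in the right-module case in \cite[Lemma 6.82]{T2}.
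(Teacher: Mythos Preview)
Your proposal is correct and matches the paper's approach exactly: the paper simply states that the left-module proposition follows ``by considering the parallel argument'' to the right-module case (which in turn is \cite[Lemma 6.82]{T2}), and your write-up is precisely a careful unpacking of what that parallel argument entails, including the symmetry observation that the conditions in \cite[Definition 2.7.3 (a),(b)]{KL1} are side-agnostic.
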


\indent Then we consider the corresponding bimodules, and by relying on some argument essentially due to Kedlaya we have the following:

\begin{proposition} \mbox{\bf{(Kedlaya)}} \label{proposition5.11con}
Assume now the Banach algebras are commutative. Consider the following exact sequence of Banach algebras satisfying the corresponding conditions in \cite[Definition 2.7.3 (a),(b)]{KL1}:
\begin{align}
0\rightarrow {\Pi}\rightarrow \Pi_1\bigoplus \Pi_2\rightarrow \Pi_{12}\rightarrow 0.	
\end{align}
And consider the corresponding glueing datum $(M_1,M_2,M_{12})$ over the corresponding rings above, and we assume that the glueing datum is finite projective. Then we have that the corresponding kernel $M$ of
\begin{align}
M_1\bigoplus M_2\rightarrow M_{12}	
\end{align}
as module over $\Pi$ is finite projective, with the corresponding isomorphisms:
\begin{align}
M\otimes \Pi_1 \overset{\sim}{\rightarrow}	M_1,\\
M\otimes \Pi_2 \overset{\sim}{\rightarrow}	M_2.
\end{align}

\end{proposition}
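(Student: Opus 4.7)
The plan is first to invoke the two preceding propositions (the right-module and left-module glueing statements) in the commutative setting. Since $\Pi, \Pi_1, \Pi_2, \Pi_{12}$ are commutative, a bimodule glueing datum is simultaneously a left and right glueing datum, so both propositions apply and produce the same kernel module $M$. From these we obtain immediately that $M$ is finitely presented over $\Pi$ and that the base-change maps $M \otimes_{\Pi} \Pi_i \overset{\sim}{\rightarrow} M_i$ are isomorphisms for $i=1,2$. Consequently the remaining task reduces to upgrading $M$ from finitely presented to finite projective.

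The second step is to apply faithfully flat descent to the canonical map $\Pi \to \Pi_1 \oplus \Pi_2$. Under the Kedlaya--Liu glueing conditions in \cite[Definition 2.7.3 (a),(b)]{KL1}, this map is faithfully flat. Since $M \otimes_{\Pi}(\Pi_1 \oplus \Pi_2) \cong M_1 \oplus M_2$ by the base-change isomorphisms from step one, and since each $M_i$ is finite projective over $\Pi_i$ by assumption, the base change of $M$ along this faithfully flat map is finite projective. Because finite projectivity of a finitely presented module is a local property for the faithfully flat topology (equivalently, flatness descends along faithfully flat maps, and a finitely presented flat module over a commutative ring is projective), we conclude that $M$ itself is finite projective over $\Pi$. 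The base-change identifications required in the statement have already been established in step one, so this completes the proof.

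The main obstacle, and precisely the reason this statement is credited to Kedlaya as an amplification of \cite[Theorem 1.3.9 (b)]{KL1}, is the removal of the spectrum hypothesis present in the original theorem. The classical argument checks projectivity by computing ranks pointwise on the Banach spectrum of $\Pi$ and then assembling the result; this requires some control over the spectrum. Kedlaya's insight is to bypass the spectrum entirely, using the faithfully flat cover $\Pi \to \Pi_1 \oplus \Pi_2$, which is supplied directly by the glueing hypotheses, together with the descent principle for finitely presented projective modules. The technical subtlety to verify is that the Kedlaya--Liu conditions yield genuine faithful flatness rather than mere flatness; this ultimately comes from the covering axiom in \cite[Definition 2.7.3]{KL1}, which ensures that every maximal (or proper) ideal of $\Pi$ extends to a proper ideal in at least one of $\Pi_1$ or $\Pi_2$, so that no nonzero $\Pi$-module is annihilated by the base change.
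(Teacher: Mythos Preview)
Your first step is fine: finite presentedness of $M$ and the base-change isomorphisms $M\otimes_\Pi \Pi_i \cong M_i$ are exactly what \cite[Theorem 1.3.9(a)]{KL1} (equivalently, the two preceding propositions in the commutative case) provide. The problem is your second step. You assert that the conditions in \cite[Definition 2.7.3 (a),(b)]{KL1} make $\Pi \to \Pi_1 \oplus \Pi_2$ faithfully flat, and then invoke descent of projectivity. But those conditions are purely analytic: (a) is strict exactness of the sequence of Banach modules, and (b) is a density statement for the image of one factor in $\Pi_{12}$. Neither yields flatness of $\Pi \to \Pi_i$ in general, and there is no ``covering axiom'' among (a),(b) forcing every prime of $\Pi$ to survive in some $\Pi_i$; that sort of control is precisely the content of the spectrum hypothesis in \cite[Theorem 1.3.9(b)]{KL1} that this proposition is meant to \emph{remove}. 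If faithful flatness were available from (a),(b) alone, the original Theorem 1.3.9(b) would not have needed the extra condition. So your descent argument rests on an unproved (and in this generality false) claim, and the gap is exactly the step you flagged as ``the technical subtlety to verify'' --- you worried about faithfulness, but flatness itself is already missing.

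The paper's argument avoids flatness entirely. Starting from a finite presentation $B \to A \to M \to 0$ and its base changes $B_i \to A_i \to M_i \to 0$, one observes that the Hom-modules $\mathrm{Hom}(M_i,B_i)$ again form a finite projective glueing datum, so \cite[Theorem 1.3.9(a)]{KL1} applied to \emph{that} datum gives surjectivity of
\[
\mathrm{Hom}(M_1,B_1)\oplus\mathrm{Hom}(M_2,B_2)\longrightarrow \mathrm{Hom}(M_{12},B_{12}).
\]
Now choose splittings $s_i$ of the presentations of $M_i$ (they exist because $M_i$ is projective); their difference over $\Pi_{12}$ lies in the target of the map above, hence lifts, and subtracting the lift produces a compatible pair $(s_1'',s_2'')$ lying in the kernel. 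This compatible pair glues to a splitting of $A \to M$, proving $M$ is projective. The key point is that only the \emph{surjectivity} conclusion of part (a) is used, never flatness.
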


\begin{proof}
This proof is due to Kedlaya. The corresponding finitely presentedness is given in \cite[Theorem 1.3.9(a)]{KL1}, note here that we are under the conditions (a), (b) in \cite[Definition 2.7.3 (a),(b)]{KL1}. To promote the corresponding property to that being finite projective. We look at the corresponding diagram coming from the desired presentation:
\[
\xymatrix@R+3pc@C+3pc{
B \ar[r] \ar[r] \ar[r] \ar[d] \ar[d] \ar[d] &B_1\oplus B_2 \ar[r] \ar[r] \ar[r] \ar[d] \ar[d] \ar[d] &B_{12} \ar[d] \ar[d] \ar[d]\\
A \ar[r] \ar[r] \ar[r] \ar[d] \ar[d] \ar[d] &A_1\oplus A_2 \ar[r] \ar[r] \ar[r] \ar[d] \ar[d] \ar[d] &A_{12} \ar[d] \ar[d] \ar[d]\\
M \ar[r] \ar[r] \ar[r]  &M_1\oplus M_2 \ar[r] \ar[r] \ar[r]  &M_{12}. \\
}
\]
Now we look at the sets:
\begin{align}
\mathrm{Hom}(M_1,B_1),\mathrm{Hom}(M_2,B_2)	
\end{align}
which actually are also modules over the corresponding base rings. By \cite[Theorem 1.3.9(a)]{KL1} we can basically apply the construction to 
\begin{align}
\mathrm{Hom}(M_1,B_1),\mathrm{Hom}(M_2,B_2),\mathrm{Hom}(M_{12},B_{12}) 	
\end{align}	
which relates directly to a glueing square, which gives rise to the surjective morphism:
\begin{align}
\mathrm{Hom}(M_1,B_1)\oplus\mathrm{Hom}(M_2,B_2)\rightarrow \mathrm{Hom}(M_{12},B_{12}).	
\end{align}	
Then we choose the corresponding splittings $s_1,s_2$ for $M_1$ and $M_2$ respectively. Then map them to $\mathrm{Hom}(M_{12},B_{12})$. Then we have there is an element $s\in \mathrm{Hom}(M_{12},B_{12})$ such that $f_{1,12}(s_1)-f_{2,12}(s_2)=s$. Then choose the corresponding preimage $s'_1+s'_2$ of $s$ in $\mathrm{Hom}(M_1,B_1)\oplus\mathrm{Hom}(M_2,B_2)$ through:
\begin{align}
\mathrm{Hom}(M_1,B_1)\oplus\mathrm{Hom}(M_2,B_2)\rightarrow \mathrm{Hom}(M_{12},B_{12}).	
\end{align}
This makes the corresponding correct modification to the corresponding splitting $s_1+s_2$ to make sure that $s_1''+s_2''$ lives in the kernel of 
\begin{align}
\mathrm{Hom}(M_1,B_1)\oplus\mathrm{Hom}(M_2,B_2)\rightarrow \mathrm{Hom}(M_{12},B_{12}).	
\end{align}
Then this gives rise to a corresponding splitting for $M$.
\end{proof}

\begin{proposition} \mbox{\bf{(After Kedlaya)}} \label{proposition5.11}
Consider the following exact sequence of Banach algebras satisfying the corresponding conditions in \cite[Definition 2.7.3 (a),(b)]{KL1}:
\begin{align}
0\rightarrow {\Pi}\rightarrow \Pi_1\bigoplus \Pi_2\rightarrow \Pi_{12}\rightarrow 0.	
\end{align}
And consider the corresponding glueing datum $(M_1,M_2,M_{12})$ over the corresponding rings above, and we assume that the glueing datum is finite projective. Then we have that the corresponding kernel $M$ of
\begin{align}
M_1\bigoplus M_2\rightarrow M_{12}	
\end{align}
as bimodule over $\Pi$ is finite projective, with the corresponding isomorphisms:
\begin{align}
M\otimes \Pi_1 \overset{\sim}{\rightarrow}	M_1,\\
M\otimes \Pi_2 \overset{\sim}{\rightarrow}	M_2.
\end{align}

\end{proposition}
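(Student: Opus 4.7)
The plan is to imitate Kedlaya's splitting construction from the proof of the preceding Proposition \ref{proposition5.11con}, but carried out with bimodule homomorphisms in place of one-sided ones, and with the left and right finite-presentedness supplied by the two propositions immediately preceding it.

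First I would invoke the right-module and left-module versions already established to obtain that the kernel $M$ is simultaneously finitely presented as a right $\Pi$-module and as a left $\Pi$-module, with both families of base-change isomorphisms $M\otimes \Pi_i \overset{\sim}{\to} M_i$ and $\Pi_i\otimes M \overset{\sim}{\to} M_i$ holding. Since the bimodule structure on $M$ is literally a pair of compatible one-sided actions restricted from $M_1\oplus M_2$, this already yields that $M$ is a finitely presented $\Pi$-bimodule with the required base-change identifications. This supplies the bimodule analogue of \cite[Theorem 1.3.9(a)]{KL1}.

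Next, assuming each $M_i$ is a direct summand of $\Pi_i^{n}$ as a $\Pi_i$-bimodule, I would pick bimodule splittings $s_i : M_i \to \Pi_i^{n}$ of the canonical surjections. I would then form the auxiliary glueing datum
\begin{align*}
\bigl(\mathrm{Hom}_{\Pi_1\text{-bimod}}(M_1,\Pi_1^{n}),\ \mathrm{Hom}_{\Pi_2\text{-bimod}}(M_2,\Pi_2^{n}),\ \mathrm{Hom}_{\Pi_{12}\text{-bimod}}(M_{12},\Pi_{12}^{n})\bigr),
\end{align*}
which, since the $M_i$ are finite projective bimodules, is itself a finite projective bimodule glueing datum and fits into a short exact glueing sequence under the hypotheses of \cite[Definition 2.7.3(a),(b)]{KL1}. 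Applying the bimodule analogue of \cite[Theorem 1.3.9(a)]{KL1} established in the first step, I obtain surjectivity of
\begin{align*}
\mathrm{Hom}_{\Pi_1\text{-bimod}}(M_1,\Pi_1^{n})\oplus \mathrm{Hom}_{\Pi_2\text{-bimod}}(M_2,\Pi_2^{n}) \longrightarrow \mathrm{Hom}_{\Pi_{12}\text{-bimod}}(M_{12},\Pi_{12}^{n}).
\end{align*}
The discrepancy $f_{1,12}(s_1)-f_{2,12}(s_2)$ of the two chosen splittings on the overlap then admits a preimage $s_1'+s_2'$, and $s_1-s_1',\, s_2-s_2'$ descend to a global bimodule splitting $s: M \to \Pi^{n}$, exhibiting $M$ as a direct summand of $\Pi^{n}$ in the category of $\Pi$-bimodules. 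The base-change isomorphisms then come for free from the one-sided statements combined with the bimodule compatibility.

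The main obstacle I expect is the verification that the bimodule Hom objects form a glueing datum which is itself finite projective and to which the bimodule analogue of \cite[Theorem 1.3.9(a)]{KL1} applies; in the commutative case of Proposition \ref{proposition5.11con} this is transparent because Hom into $B_i$ inherits a module structure over the same base ring, whereas in the bimodule setting one must check that $\mathrm{Hom}_{\Pi_1\text{-bimod}}(M_1,\Pi_1^{n})\widehat{\otimes}_{\Pi_1}\Pi_{12}\overset{\sim}{\to}\mathrm{Hom}_{\Pi_{12}\text{-bimod}}(M_{12},\Pi_{12}^{n})$, with the completed tensor product interpreted so as to balance both actions. For finite projective bimodules this compatibility reduces to the case of free bimodules of finite rank, but the bookkeeping of left and right Banach structures under the completed tensor products is the delicate point. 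Once this is in hand, the rest of the argument proceeds by the direct transcription of Kedlaya's splitting-correction described above.
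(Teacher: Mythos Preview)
Your strategy diverges from the paper's, and the divergence is precisely at the point you yourself flag as delicate. The paper does \emph{not} attempt to produce a single bimodule splitting. Instead it runs Kedlaya's splitting-correction argument twice, once on each side: first it regards $\mathrm{Hom}(M_i,B_i)$ as \emph{left} $\Pi_i$-modules (the left action coming from the \emph{right} $\Pi_i$-action on the bimodule $M_i$), applies the already-proved left glueing proposition to obtain surjectivity on the overlap, corrects the left-module splittings $s_1,s_2$ so they agree on $\Pi_{12}$, and concludes that $M$ is finite projective as a left $\Pi$-module. Then it repeats the identical argument with right modules. The output is that $M$ is finite projective as a left module and as a right module separately; the paper's working notion of ``finite projective bimodule'' is evidently this two-sided one, not projectivity in the category of bimodules.

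Your approach via $\mathrm{Hom}_{\Pi_i\text{-bimod}}(M_i,\Pi_i^{n})$ runs into a genuine obstruction: these Hom groups carry only a $Z(\Pi_i)$-module structure, not a $\Pi_i$-bimodule structure, so they do not assemble into a glueing datum over $(\Pi_1,\Pi_2,\Pi_{12})$ to which any of the earlier propositions apply. Your proposed reduction to the free case does not help: already for $M_i=\Pi_i^{n}$ one has $\mathrm{Hom}_{\text{bimod}}(\Pi_i^{n},\Pi_i^{n})\cong M_n(Z(\Pi_i))$, and the surjectivity of $Z(\Pi_1)\oplus Z(\Pi_2)\to Z(\Pi_{12})$ is not part of the hypotheses in \cite[Definition 2.7.3(a),(b)]{KL1}. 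The paper sidesteps this entirely by never forming bimodule Homs; the one-sided Hom inherits a one-sided $\Pi_i$-structure from the \emph{other} side of the bimodule $M_i$, and that is what makes the glueing machinery applicable.
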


\begin{proof}
The left module and the right module structures have already given us the corresponding finitely presentedness. To promote the corresponding property to that being finite projective. We look at the corresponding diagram coming from the desired presentation:
\[
\xymatrix@R+3pc@C+3pc{
B \ar[r] \ar[r] \ar[r] \ar[d] \ar[d] \ar[d] &B_1\oplus B_2 \ar[r] \ar[r] \ar[r] \ar[d] \ar[d] \ar[d] &B_{12} \ar[d] \ar[d] \ar[d]\\
A \ar[r] \ar[r] \ar[r] \ar[d] \ar[d] \ar[d] &A_1\oplus A_2 \ar[r] \ar[r] \ar[r] \ar[d] \ar[d] \ar[d] &A_{12} \ar[d] \ar[d] \ar[d]\\
M \ar[r] \ar[r] \ar[r]  &M_1\oplus M_2 \ar[r] \ar[r] \ar[r]  &M_{12}. \\
}
\]
Now we look at the sets:
\begin{align}
\mathrm{Hom}(M_1,B_1),\mathrm{Hom}(M_2,B_2)	
\end{align}
which actually are also left modules. By the left glueing process we consider before we can basically apply the construction to 
\begin{align}
\mathrm{Hom}(M_1,B_1),\mathrm{Hom}(M_2,B_2),\mathrm{Hom}(M_{12},B_{12}) 	
\end{align}	
which relates directly to a glueing square, which gives rise to the surjective morphism:
\begin{align}
\mathrm{Hom}(M_1,B_1)\oplus\mathrm{Hom}(M_2,B_2)\rightarrow \mathrm{Hom}(M_{12},B_{12}).	
\end{align}	
This is again as left module consideration. Then we choose the corresponding splittings $s_1,s_2$ for $M_1$ and $M_2$ respectively. Then map them to $\mathrm{Hom}(M_{12},B_{12})$. Then we have there is an element $s\in \mathrm{Hom}(M_{12},B_{12})$ such that $f_{1,12}(s_1)-f_{2,12}(s_2)=s$. Then choose the corresponding preimage $s'_1+s'_2$ of $s$ in $\mathrm{Hom}(M_1,B_1)\oplus\mathrm{Hom}(M_2,B_2)$ through:
\begin{align}
\mathrm{Hom}(M_1,B_1)\oplus\mathrm{Hom}(M_2,B_2)\rightarrow \mathrm{Hom}(M_{12},B_{12}).	
\end{align}
This makes the corresponding correct modification to the corresponding splitting $s_1+s_2$ to make sure that $s_1''+s_2''$ lives in the kernel of 
\begin{align}
\mathrm{Hom}(M_1,B_1)\oplus\mathrm{Hom}(M_2,B_2)\rightarrow \mathrm{Hom}(M_{12},B_{12}).	
\end{align}
Then this gives rise to a corresponding splitting for $M$ as left module. Then on the other hand, we look at the corresponding diagram coming from the desired presentation:
\[
\xymatrix@R+3pc@C+3pc{
B \ar[r] \ar[r] \ar[r] \ar[d] \ar[d] \ar[d] &B_1\oplus B_2 \ar[r] \ar[r] \ar[r] \ar[d] \ar[d] \ar[d] &B_{12} \ar[d] \ar[d] \ar[d]\\
A \ar[r] \ar[r] \ar[r] \ar[d] \ar[d] \ar[d] &A_1\oplus A_2 \ar[r] \ar[r] \ar[r] \ar[d] \ar[d] \ar[d] &A_{12} \ar[d] \ar[d] \ar[d]\\
M \ar[r] \ar[r] \ar[r]  &M_1\oplus M_2 \ar[r] \ar[r] \ar[r]  &M_{12}. \\
}
\]
Now we look at the sets:
\begin{align}
\mathrm{Hom}(M_1,B_1),\mathrm{Hom}(M_2,B_2)	
\end{align}
which actually are also right modules. By the right glueing process we consider before we can basically apply the construction to 
\begin{align}
\mathrm{Hom}(M_1,B_1),\mathrm{Hom}(M_2,B_2),\mathrm{Hom}(M_{12},B_{12}) 	
\end{align}	
which relates directly to a glueing square, which gives rise to the surjective morphism:
\begin{align}
\mathrm{Hom}(M_1,B_1)\oplus\mathrm{Hom}(M_2,B_2)\rightarrow \mathrm{Hom}(M_{12},B_{12}).	
\end{align}	
This is again as right module consideration. Then we choose the corresponding splittings $s_1,s_2$ for $M_1$ and $M_2$ respectively. Then map them to $\mathrm{Hom}(M_{12},B_{12})$. Then we have there is an element $s\in \mathrm{Hom}(M_{12},B_{12})$ such that $f_{1,12}(s_1)-f_{2,12}(s_2)=s$. Then choose the corresponding preimage $s'_1+s'_2$ of $s$ in $\mathrm{Hom}(M_1,B_1)\oplus\mathrm{Hom}(M_2,B_2)$ through:
\begin{align}
\mathrm{Hom}(M_1,B_1)\oplus\mathrm{Hom}(M_2,B_2)\rightarrow \mathrm{Hom}(M_{12},B_{12}).	
\end{align}
This makes the corresponding correct modification to the corresponding splitting $s_1+s_2$ to make sure that $s_1''+s_2''$ lives in the kernel of 
\begin{align}
\mathrm{Hom}(M_1,B_1)\oplus\mathrm{Hom}(M_2,B_2)\rightarrow \mathrm{Hom}(M_{12},B_{12}).	
\end{align}
Then this gives rise to a corresponding splitting for $M$ as right module as well.

\end{proof}

\indent Apply the corresponding general results developed above we have the following result for glueing finite projective Frobenius modules.

\begin{proposition} \label{proposition5.13}
Consider the following two categories. The first category is the corresponding finite projective $\varphi^a$-bibundles over the corresponding period ring $\widetilde{\Pi}_{R,B}$. And the corresponding second finite projective $\varphi^a$-bimodules over the period ring $\widetilde{\Pi}^{[s,r]}_{R,B}$	such that we have $0<s\leq rp^{-ha}$. Then we have that the corresponding involved categories are actually equivalent to each other.
\end{proposition}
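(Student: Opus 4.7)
\medskip

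The plan is to mirror the commutative argument used in Theorem \ref{theorem3.11} (equivalence between objects on a single interval and full Frobenius bundles), replacing the commutative Kiehl glueing by the noncommutative bimodule glueing established just above in \cref{proposition5.11}. Specifically, I would produce two functors in opposite directions and check that they are mutually quasi-inverse up to canonical isomorphism.

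The restriction functor from finite projective $\varphi^a$-bibundles over $\widetilde{\Pi}_{R,B}$ to finite projective $\varphi^a$-bimodules over $\widetilde{\Pi}^{[s,r]}_{R,B}$ is essentially tautological: a bibundle is by definition a compatible family of finite projective $\varphi^a$-bimodules on sub-intervals satisfying restriction and cocycle conditions, so one just takes the component indexed by $[s,r]$. The content is in the other direction. Starting with a finite projective $\varphi^a$-bimodule $M$ over $\widetilde{\Pi}^{[s,r]}_{R,B}$ with $0<s\leq r/p^{ha}$, I would use the bimodule Frobenius isomorphism
\begin{align*}
\widetilde{\Pi}^{[s,rp^{-ha}]}_{R,B}\otimes_{\widetilde{\Pi}^{[sp^{-ha},rp^{-ha}]}_{R,B}} \varphi^{a*}M \;\overset{\sim}{\rightarrow}\; \widetilde{\Pi}^{[s,rp^{-ha}]}_{R,B}\otimes M,
\end{align*}
together with its symmetric right-tensored version, to propagate $M$ iteratively to obtain a finite projective bimodule $M_n$ over each Frobenius-translated interval $[sp^{-nha},rp^{-nha}]$ for every $n\in\mathbb{Z}$, together with canonical compatibility isomorphisms on the overlaps coming from the two directions of Frobenius translation.

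The core step is then to glue these $M_n$'s into a single compatible family indexed by all closed sub-intervals $[s',r']\subset(0,\infty)$ with $s'\leq r'/p^{ha}$. Since $\widetilde{\Pi}^{[s_1,r_1]}_{R,B}\to\widetilde{\Pi}^{[s_1,r_2]}_{R,B}\oplus\widetilde{\Pi}^{[s_2,r_1]}_{R,B}\rightrightarrows\widetilde{\Pi}^{[s_2,r_2]}_{R,B}$ falls under the glueing framework of \cite[Definition 2.7.3 (a),(b)]{KL1} (this is exactly what is used in the commutative arguments of Section 3.2), one applies \cref{proposition5.11} to each such two-step overlap to produce the glued bimodule. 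The cocycle condition for triple overlaps then follows from the usual reduction to pairwise glueing, applied separately for the left and right module structures; functoriality of the construction in \cref{proposition5.11} ensures that the two glued splittings (from the left and right sides) assemble into a bimodule structure.

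The main obstacle I expect is controlling finiteness and projectivity globally. Over each individual $\mathrm{Spra}(\widetilde{\Pi}^{[sp^{-kha},rp^{-kha}]}_{R,B},\widetilde{\Pi}^{[sp^{-kha},rp^{-kha}],\mathrm{Gr}}_{R,B})$ one can invoke \cite[Proposition 2.6.17]{KL2} just as in the commutative proof, but in the noncommutative setting one must verify that the left and right module splittings produced by \cref{proposition5.11} can be chosen compatibly across the entire inverse system, so that the two finite projective structures are genuinely part of a single bimodule structure. Once this compatibility is checked, verifying that the two constructed functors are mutually inverse becomes formal: restriction followed by glueing recovers the original bimodule by the defining property of the kernel presentation in \cref{proposition5.11}, while glueing followed by restriction is the identity since the glued object is by construction the specific member of a compatible family. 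The pseudocoherence-style considerations in the proof of \cref{theorem3.11} translate directly once the bimodule glueing is in place.
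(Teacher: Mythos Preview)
Your core strategy matches the paper's proof exactly: restriction is the easy direction, and for essential surjectivity one Frobenius-propagates the given bimodule to every shifted interval $[sp^{-kha},rp^{-kha}]$ and then glues pairwise via \cref{proposition5.11} to reach an arbitrary closed sub-interval. That is precisely what the paper does, in two sentences.

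Where you over-complicate things is in the paragraph about ``controlling finiteness and projectivity globally''. A $\varphi^a$-bibundle over $\widetilde{\Pi}_{R,B}$ is \emph{by definition} just a compatible family of finite projective bimodules on closed sub-intervals; there is no single global module to produce, so you never need the reified spectra $\mathrm{Spra}(\cdot)$ or \cite[Proposition~2.6.17]{KL2}. In fact those tools are commutative-geometric and do not even apply to the noncommutative ring $\widetilde{\Pi}^{I}_{R,B}$, so invoking them here would be a genuine error rather than a harmless detour. Likewise the ``pseudocoherence-style considerations'' from \cref{theorem3.11} are irrelevant: the present statement concerns only finite projective objects.

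Finally, your worry about whether the left and right splittings from \cref{proposition5.11} can be chosen compatibly is unfounded: that proposition already outputs a finite projective \emph{bimodule}, and the glueing construction is functorial in the input datum, so compatibility across overlapping intervals follows from the cocycle relations you already have among the Frobenius-translated pieces. Drop the reified-space paragraph and the pseudocoherence remark, and the remaining argument is exactly the paper's.
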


\begin{proof}
The functor from the first category to the second one realizing this equivalence is just the corresponding projection. To show that for any bimodule over $\widetilde{\Pi}^{[s,r]}_{R,B}$ we have the corresponding essential surjectivity we lift the corresponding bimodule up to achieve a family of bimodules with the same rank by applying the corresponding Frobenius with respect to different intervals taking the form of $[sp^{-akh},rp^{-akh}]$. Then for general interval we are free to glue finite projective modules by applying \cref{proposition5.11}.\\
\end{proof}

\newpage

\subsection*{Acknowledgements} 

We would like to thank Professor Kedlaya for helpful discussion on the descent, which was obviously one of the reasons for us to write this paper. The corresponding ideas of forming mixed-type period rings and the mixed-type Hodge-structures are essentially inspired by the work of Professor Kedlaya (such as in the work around Drinfeld's Lemma of perfectoid spaces). We benefit a lot as well from the discussion with Federico Bambozzi on the work of Bambozzi-Kremnizer around the corresponding $\infty$-sheafiness and $\infty$-analytic spaces.

\newpage

\bibliographystyle{ams}

\end{document}